\documentclass{amsart}
\usepackage{amsmath, amsthm, amssymb}
\usepackage{verbatim}
\usepackage{tikz}
\usetikzlibrary{matrix}

\usepackage[mathscr]{eucal} 
\usepackage{mathrsfs} 
\usepackage{cancel}

\usepackage{enumitem, hyperref}
\makeatletter
\def\namedlabel#1#2{\begingroup
    #2%
    \def\@currentlabel{#2}%
    \phantomsection\label{#1}\endgroup
}

\renewcommand{\d}{\partial}

\newcommand{\wt}[1]{\widetilde{#1}}
\newcommand{\lc}{\left<}
\newcommand{\rc}{\right>}

\newcommand{\eps}{\epsilon}
\newcommand{\veps}{\varepsilon}

\newcommand{\al}{\alpha} 
\newcommand{\ze}{\zeta}

\newcommand{\de}{\delta}

\newcommand{\om}{\omega}

\newcommand{\vka}{\varkappa}

\newcommand{\Om}{\Omega}
\newcommand{\De}{\Delta}

\newcommand{\cP}{\mathcal{P}}

\newcommand{\cH}{\mathcal{H}}
\newcommand{\cL}{\mathcal{L}}

\newcommand{\cO}{\mathcal{O}}

\newcommand{\bB}{\mathbb{B}}
\newcommand{\bK}{\mathbb{K}}
\newcommand{\bP}{\mathbb{P}}
\newcommand{\bR}{\mathbb{R}}

\newcommand{\bC}{\mathbb{C}}

\newcommand{\bN}{\mathbb{N}}
\newcommand{\vpi}{\varpi}

\newcommand{\wh}[1]{\widehat{#1}}
\newcommand{\ov}[1]{\overline{#1}}

\newcommand{\uU}[1]{{\rm U}#1}

\newcommand{\cali}[1]{\mathscr{#1}}

\newcommand{\Cc}{\cali{C}}

\newtheorem{thm}{Theorem}
\newtheorem{prop}[thm]{Proposition}
\newtheorem{lem}[thm]{Lemma}
\newtheorem{cor}[thm]{Corollary}

\theoremstyle{definition}

\newtheorem{defn}[thm]{Definition}
\newtheorem{remark}[thm]{Remark}

\newtheorem{expl}[thm]{Example}
\newtheorem*{rmkx}{Remark}

\numberwithin{thm}{section}
\numberwithin{equation}{section}

\renewcommand{\[}{\begin{equation}}
\renewcommand{\]}{\end{equation}}

\title[H\"older Regularity of extremal functions 
]{H\"older regularity of Siciak-Zaharjuta extremal functions on compact Hermitian manifolds
}

\author{Hyunsoo Ahn} 
\address{Department of Mathematical Sciences, KAIST, 291 Daehak-ro, Yuseong-gu, Daejeon 34141, South Korea}
\email{kakapoolove@kaist.ac.kr}

\begin{document} 


\begin{abstract} 
For a compact subset in a compact Hermitian manifold, 
we prove that the H\"older continuity of the extremal function at a given point in the set is a local property 
and that the H\"older continuity of a weighted extremal function follows from the H\"older continuities of the extremal function and the weight function with a uniform density in capacity. The second result can be seen as an extension of a result of Lu, Phung and T\^o \cite{LPT21}. 
Moreover, for a compact subset in a compact Hermitian manifold, 
we prove that, both at the point level and at the global level, the H\"older continuity of the extremal function 
with the uniform density in capacity is equivalent to the local H\"older continuity property, which is also equivalent to the weak local H\"older continuity property. 
These results are generalizations of the results of Nguyen \cite{Ng24} on compact K\"ahler manifolds. We also show that the \(\mu\)-H\"older continuity property of a convex compact subset in \(\mathbb{C}^n\) and of a compact subset in \(\mathbb{C}^n\) at a star center 
imply the local \(\mu\)-H\"older continuity property of order \(\mu\) of the convex compact subset and of the compact subset at the point, respectively. 
\end{abstract}

\keywords{
extremal function, 
Hermitian manifold, 
H\"older continuous,
uniform density in capacity,
HCP,
local H\"older continuity property
}

\maketitle

\section{Introduction}
{\em Background.}
Complex Monge-Amp\`ere equations are essential for the classification of complex surfaces and in solving geometric flow problems \cite{PSS12}. Solving complex Monge-Amp\`ere equations on compact Hermitian manifolds is important in pluripotential theory because it generalizes K\"ahler geometry to broader non-K\"ahler settings and provides a framework for analyzing degenerate equations \cite{GZ17}. 
Stability and the H\"older regularity of the solutions to the equations on compact Hermitian manifolds are obtained in \cite{KN18}. 

The H\"older continuity of the solution to the complex Monge-Amp\`ere equations is the best global regularity when the density is in \(L^p\) with \(p>1\). (Let \(\|\cdot\|\) be the standard Euclidean norm on \(\mathbb{C}^n\). For each \(p\in(1,2)\) and \(\beta\in(2-2/p,1)\), 
\(u(z):=\|z\|^{\beta}\) has \(L^p\)-density, 
but \(u\) is not Lipschitz continuous at \(\mathbf{0}\).) 
Ko{\l}odziej \cite{Ko08} first proved the H\"older continuity. 
After that, 
Demailly, Dinew, Guedj, Pham, Ko{\l}odziej and Zeriahi in \cite{DDGHKZ} showed it with an explicit exponent on compact K\"ahler manifolds. After that, Ko\l odziej and Nguyen in \cite{KN19} showed the H\"older continuity on compact Hermitian manifolds with strictly positive density. Also, Ko{\l}odziej and Nguyen in \cite{KN18} showed it on compact Hermitian manifolds with non-negative density. 

In fact, Ko\l odziej and Nguyen showed more than that. They  \cite{KN18} proved that on a compact Hermitian manifold \((X,\omega)\), complex Monge-Amp\`ere equations with finite Borel measures, dominated by complex Monge-Amp\`ere measures of \(\omega\)-plurisubharmonic H\"older continuous functions (in other words, admitting global H\"older continuous
subsolutions), have H\"older continuous solutions. 
Lu, Phung and T\^o in \cite{LPT21} showed a nearly optimal H\"older exponent of the solutions to the equations in compact Hermitian manifolds, as in compact K\"ahler manifolds.

The Siciak-Zaharjuta extremal function can serve as such a subsolution when it is H\"older continuous. 
Lu, Phung and T\^o in \cite{LPT21} also showed that for a H\"older continuous weight function on a compact Hermitian manifold, its \(\omega\)-plurisubharmonic envelope becomes an \(\omega\)-plurisubharmonic H\"older continuous function. 
One of our results extends this result of \cite{LPT21} for general compact subsets of compact Hermitian manifolds. 

Recall that the Lelong class \(\mathcal{L}\) is defined by 
$$\mathcal{L}:=\{u\in PSH(\mathbb{C}^n):
\text{for some constant }c_u,\;
u(z)\leq\max\{0,\log\|z\|\}+c_u,\;z\in\mathbb{C}^n\}.$$
For a subset \(E\) in \(\mathbb{C}^n\), the classical Siciak-Zaharjuta extremal function of \(E\) introduced in \cite{Si62,Siciak81,Za76} is defined on \(\mathbb{C}^n\) by 
$$L_{E}(z) := \sup\{ u(z) : u \in \mathcal{L}, \, u|_E \leq 0\},\quad z\in \mathbb{C}^n.$$
The regularity of the extremal function of a set gives geometric information about the set itself. For example, $E$ is pluripolar if and only if $L_E^*\equiv\infty$, where \(^*\) denotes the upper semicontinuous regularization, and \(E\) is non-pluripolar if and only if \(L_E^*\in\mathcal{L}\). 
For a real-valued function $\phi$ on $E$, the weighted extremal function $L_{E,\phi}$ 
is defined by 
$$
L_{E,\phi}(z) := \sup\{u(z): u \in \mathcal{L}, u|_E \leq \phi\},\quad z\in\mathbb{C}^n.
$$

{\em Results.} Throughout this article, $(X, \omega)$ is a 
compact Hermitian manifold of complex dimension $n$. 
A function \(\phi:X\to\mathbb{R}\cup\{-\infty\}\) is called quasi-plurisubharmonic (quasi-psh for short) if \(\phi\not\equiv-\infty\) and \(\phi\) is locally written as the sum of a smooth function and a psh function. 
For the real differential operators \(d:=\partial+\bar{\partial}\) and \(d^c:=i(\bar{\partial}-\partial)\), the family of $\omega$-plurisubharmonic ($\omega$-psh for short) 
functions on \(X\) is defined by 
\[\notag
	PSH(X,\omega) := \{v :X\to \mathbb{R}\cup\{-\infty\}: v
    \text{ is quasi-psh},\,
    \omega + dd^c v \geq 0\}.
\]
If \(\phi:X\to\mathbb{R}\cup\{-\infty\}\) is quasi-psh, then there exists a constant \(c>0\) satisfying $dd^c\phi+c\omega\geq 0$, since \(X\) is compact and $PSH(X, c_1\omega)\subset PSH(X,c_2\omega)$ for \(0<c_1<c_2<\infty\).

The extremal function \(V_E=V_{\omega;E}\) of a subset $E$ in $X$ is defined by
\[\label{eq:SZ-ext}
	V_E (x)=V_{\omega;E}(x) := \sup\{ v(x): v \in PSH(X,\omega),\; v|_E \leq 0 
    \},\quad x\in X.\]
For a weight function \(\phi:E\to\mathbb{R}\), the weighted extremal function of \((E,\phi)\) is
\[\label{eq:w-SZ-ext}
	V_{E,\phi}(x)=V_{\omega;E,\phi} (x) := \sup\{ v(x): v\in PSH(X,\omega), \; v|_E \leq \phi\},\quad x\in X.\]
When the domain of a real-valued function \(\psi\) contains \(E\), we write \(V_{E,\psi|_E}\) as \(V_{E,\psi}\) for convenience.
There is a 
bijection between the Lelong class \(\mathcal{L}\) and \(PSH(\mathbb{CP}^n,\omega_{FS})\) (\cite[Chapter~8]{GZ17}). 
When \(\phi\) is a continuous function on a compact subset of \(X\), we can extend $\phi$ to a continuous function on \(X\) with the same supremum norm by Tietze's extension theorem. An extension of a H\"older continuous function from a compact subset to \(X\) is also possible with the same H\"older exponent and the same H\"older constant by Mcshane's extension \cite{Mc34}.


Our main theorems are about the H\"older continuity of extremal functions for non-pluripolar compact sets in \(X\). 
Lu, Phung and T\^o in \cite{LPT21} proved that for a given H\"older continuous real-valued function \(\phi\) on \(X\) with a H\"older exponent \(\alpha\in (0,1)\), the envelope \(V_{X,\phi}^*\) is also H\"older continuous with the same exponent \(\alpha\). 
\(V_{X,\phi}\) is continuous by \cite{Ah26} since \(V_X\equiv0\) and \(\phi\) are continuous, thus \(V_{X,\phi}^*=V_{X,\phi}\). 
However, the H\"older exponent or even H\"older continuity of the weighted extremal functions for a general non-pluripolar compact subset is not always preserved. 
We need an additional condition, called the {\em uniform density in capacity} \cite{Ng24}. 

Let \(\Omega\) be a smoothly bounded strictly pseudoconvex domain in \(\mathbb{C}^n\). 
Bedford and Taylor in \cite{BT82} first studied the relative capacity of a Borel subset \(E\) 
in \(\Omega\), defined by  
$$Cap(E,\Omega):=\sup\{\int_{E}(dd^cu)^n:u\in PSH(\Omega),\,0\leq u\leq 1\}.$$
We denote the Euclidean open ball of center \(a\in\mathbb{C}^n\) and radius \(r\in(0,\infty]\) by \(\mathbb{B}(a,r)\) and its closure by \(\bar{\mathbb{B}}(a,r)\). 
Let \(F\) be a compact subset of \(\mathbb{C}^n\). We say that \(F\) has {a uniform density in capacity} if there exist constants \(q>0\), \(\varkappa>0\), \(\delta>0\) and a Euclidean bounded open ball \(O\) in \(\mathbb{C}^n\) such that \(\bar{\mathbb{B}}(F,\delta)\) is contained in \(O\) and for each \(b\in F\),
\[\label{eq:cap-density-Cn} 
\inf_{0<r\leq \delta} \frac{Cap(F \cap \bar{\mathbb{B}}(b,r),O)}{r^q} \geq \varkappa.\]
We say that \(F\) has {a uniform density in capacity at \(b\in F\)} if \eqref{eq:cap-density-Cn} holds.
Recall that the global Bedford-Taylor capacity of a Borel subset \(E\) 
in \(X\) is defined as
\[\label{eq:BT-cap}Cap_{\omega}(E):=\sup\{\int_E(\omega+dd^cv)^n:v\in PSH(X,\omega),\,0\leq v\leq 1\}.\]

By \cite[Proposition~2.3]{DK12}, \(Cap_{\omega}(X)<\infty\). The property of the uniform density in capacity of a compact subset in \(X\) is defined in Section~\ref{sec:hcpandudc} using the global Bedford-Taylor capacity, similarly as that of a compact subset in \(\mathbb{C}^n\).

Our main results are about the H\"older continuity of extremal functions on \(X\). 
\begin{thm}\label{thm:characterization-hcp} Let $K$ be a compact  subset of a compact Hermitian manifold \(X\) and $a\in K$. Let $\bar{B}(a,r)$ be a closed holomorphic coordinate ball in \(X\) with the center \(a\in K\) and the finite radius \(r>0\).
Then the following items hold:
\begin{itemize}
\item[(i)] Let \(\mu\in(0,1]\).
$V_K$ is $\mu$-H\"older continuous at $a$ if and only if $V_{K \cap \bar{B}(a,r)}$ is $\mu$-H\"older continuous at $a$.
\item[(ii)] If $K$ has a uniform density in capacity and   $\phi:X\to\mathbb{R}$ and \(V_K\) are H\"older continuous, then $V_{K,\phi}$ is  H\"older continuous.
\end{itemize}
\end{thm}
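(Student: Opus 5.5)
For (i), one direction is immediate from monotonicity. Since $K\cap\bar B(a,r)\subset K$, we have $V_K\le V_{K\cap\bar B(a,r)}$, and both functions vanish at $a$ (we take $a$ to be a regular point, which is implied by H\"older continuity at $a$ of either function). Hence if $V_{K\cap\bar B(a,r)}$ is $\mu$-H\"older at $a$, i.e.\ $V_{K\cap\bar B(a,r)}(x)\le C\,d(x,a)^\mu$, then the same bound holds for $V_K$.

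For the converse I would use a gluing argument. Fix $v\in PSH(X,\omega)$ with $v\le 0$ on $E:=K\cap\bar B(a,r)$. The goal is to produce a competitor for $V_K$ that agrees with a fixed positive multiple of $v$ near $a$.

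1. Choose a local potential. Let $\rho$ be a smooth strictly psh function on the coordinate ball with $dd^c\rho\ge\omega$ there. Since $v$ is $\omega$-psh, $v+\rho$ is psh on the ball. The quantity $\sup_X V_E$ is finite because $E$ is non-pluripolar, which holds under our hypothesis. Consequently $v\le M:=\sup_X V_E$ everywhere.

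2. Glue. On the annulus $r/2\le |z|\le r$, $K\setminus E$ lies outside $\bar B(a,r)$, while inside the ball we know $v\le 0$ only on $E$. So I would instead pass to the scaled function $\epsilon v$ with $\epsilon\in(0,1)$. This function lies in $PSH(X,\epsilon\omega)$ and keeps a deficit $(1-\epsilon)\omega$ of positivity. Using that deficit, set
\[
w:=\max\{\epsilon v + c_1(|z|^2-r^2/4),\; V_K - C_2\}\quad\text{on }B(a,r),
\]
and $w:=V_K-C_2$ outside. Here $c_1>0$ is chosen small enough that $c_1 dd^c|z|^2\le(1-\epsilon)\omega$. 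Near $\partial B(a,r)$ the second term must dominate; this holds if $\epsilon M\le C_2+\min V_K-\dots$. The difficulty is that $V_K-C_2$ is negative on $K$ but need not dominate near $\partial B$. To fix this I would reverse the roles and take the competitor
\[
w=\max\{\epsilon v+c_1(|z|^2-r^2/4),\; -A\},
\]
glued to the constant $\epsilon M$-level function outside.

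3. The cleaner route is the following. Since $V_K$ is $\mu$-H\"older at $a$, it is continuous and vanishes there, so I would not glue with $V_K$. Instead I would glue with the constant $0$, which is $\omega$-psh. Put
\[
w:=\max\{\epsilon v+c_1(|z|^2-r^2/4)-\epsilon M',\;0\}-\dots
\]
This does not work either, because $w$ must be $\le 0$ on $K$. The step I expect to be the main obstacle is precisely the construction of a global $\omega$-psh competitor that is $\le 0$ on $K$ yet equals $\epsilon v-O(d(x,a)^2)$ near $a$. I would first try
\[
w=\max\{\epsilon v+c_1(|z|^2-r^2/4),\;\epsilon M-c_1 r^2/4+\dots\}.
\]
If that fails, I would fall back on the comparison $V_E\le V_K+C$ on $B(a,r/2)$, which follows from the local maximality of $V_K$ on $X\setminus K$ together with the domination principle on $B(a,r)\setminus E$. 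For this I would use $h:=V_K+\sup_{\partial B}V_E$ as a barrier, relying on the $\omega$-psh domination principle on Hermitian manifolds.

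With the gluing in hand, the result follows in two steps. First, $w\le V_K$, so $\epsilon V_E\le V_K+c_1r^2/4$ near $a$. A constant error of this kind is useless, so the comparison has to be localized: replacing $r$ by $d(x,a)$ and optimizing gives $V_E(x)\lesssim V_K(x')+d^2$, and hence $V_E(x)\le C d(x,a)^\mu$. Here we use $\mu\le 1<2$.

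For (ii), I would follow the classical argument. First reduce to $\phi\in C^2$ by regularization: write $\phi_\delta=\phi+O(\delta^\alpha)$ with $\|\phi_\delta\|_{C^2}\le C\delta^{\alpha-2}$. Then $\delta^{2-\alpha}\phi_\delta/C$ is $\omega$-psh up to scaling, so $(1-s)V_K+s\phi_\delta$-type functions, with $s\approx\delta^{2-\alpha}$, give lower competitors. Combined with the H\"older continuity of $V_K$, this yields $V_{K,\phi}(y)\ge V_{K,\phi}(x)-C d^\beta$. The upper bound near $K$ uses the uniform density in capacity: a Kolodziej-type estimate $\sup_{B(b,r)}(u-V_{K,\phi})$ controlled via capacity lower bounds $\ge\varkappa r^q$ shows that $V_{K,\phi}\le\phi(b)+Cr^{\gamma}$ on $B(b,r)$. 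Then a Demailly-type regularization $\rho_\delta V_{K,\phi}$ (as in \cite{LPT21}), with a correction term, completes the H\"older estimate.
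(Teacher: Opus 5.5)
\paragraph{The converse in (i) is not proved.} Your easy direction of (i) is fine. The converse, however, is never established: each competitor you try has the wrong shape, and the fallback is not an argument.

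The term $c_1(|z|^2-r^2/4)$ is most negative exactly at $a$ and grows toward $\partial B(a,r)$. Being psh, it uses none of the deficit $(1-\epsilon)\omega$. At best it yields $\epsilon V_E(a)\le c_1r^2/4$, a constant error. The domination-principle route likewise gives only the constant $\sup_{\partial B}V_E$. Shrinking the radius with $d(x,a)$ would then require bounds on $\sup V_{K\cap\bar B(a,d)}$ as $d\to0$. That is essentially the local HCP of Theorem~\ref{thm:characterization-hcp-hcp}, and it does not follow from your hypothesis. No shrinking is needed anyway: for fixed $r$ the constant may depend on $r$.

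The missing idea is a globally defined term that vanishes at $a$ and equals $-1$ off the ball, paid for by scaling $v$ by $r^2$, and applied to competitors bounded by $1$. Take $\chi_r\in C^\infty(X,[0,1])$ with the following properties:
\begin{itemize}
\item $\chi_r(a)=0$ and $\chi_r\equiv1$ on $X\setminus\bar B(a,r)$;
\item $\chi_r(x)\le c_1\,d(x,a)/r$;
\item $dd^c\chi_r\le (c_2'/r^2)\,\omega$.
\end{itemize}
Set $\varepsilon_r=r^2/(2c_2')$. Let $v\in PSH(X,\omega)$ with $v\le 1$ and $v\le 0$ on $E=K\cap\bar B(a,r)$. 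Then $\varepsilon_r(v-\chi_r)$ is $\omega$-psh on all of $X$, so no gluing is needed. It is also $\le 0$ on $K$, because $v\le 1=\chi_r$ on $K\setminus\bar B(a,r)$. Hence $\varepsilon_r h_E^*\le V_K^*+\varepsilon_r\chi_r$, and the error now vanishes at $a$. This gives $h_E^*(x)\le C_r\,d(x,a)^\mu$.

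In particular $h_E^*(a)=0$. This is what proves $E$ is non-pluripolar; you only assert it. Finally, $V_E^*\le(\sup_X V_E^*)\,h_E^*\le \frac{A}{Cap_\omega(E)}\,h_E^*$ finishes the proof.

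\paragraph{Part (ii) inherits the gap.} What must be combined with the density in capacity is exactly the quantitative, uniform-in-$b$ form of the argument above (Corollary~\ref{cor:sharp-Holder-norm}): for $d(y,b)\le\delta$,
\[
V_{K\cap\bar B(b,R)}(y)\le \frac{A'}{Cap_\omega(K\cap\bar B(b,R))}\,\frac{\delta^\mu}{R^2}\le\frac{A'\delta^\mu}{\varkappa R^{q+2}}.
\]
Let $b_x$ be a nearest point of $K$ to $x$. Monotonicity gives $V_{K,\phi}(x)\le\phi(x)+c\delta^\mu+cR^\mu+V_{K\cap\bar B(b_x,R)}(x)$. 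Choosing $R=\delta^{\mu/(\mu+q+2)}$ yields $V_{K,\phi}\le\phi+C\delta^{\mu^2/(\mu+q+2)}$ on $\{d(\cdot,K)\le\delta\}$.

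Your unspecified ``Ko\l odziej-type estimate'' does not supply this bound, and it never uses the H\"older continuity of $V_K$. Your final Demailly--Kiselman step is the right globalization (Proposition~\ref{prop:W}), but it needs only this one-sided bound near $K$. The $C^2$-reduction and the ``lower competitor'' step are therefore unnecessary. As written, the latter is also unsupported, for two reasons:
\begin{itemize}
\item $dd^c(s\phi_\delta)$ is only bounded below by $-C\omega$ with $C$ independent of $s$, not by $-s\omega$, so $(1-s)V_K+s\phi_\delta$ need not be $\omega$-psh.
\item A lower bound by a function built from $V_K$ and $\phi_\delta$ cannot, by itself, compare values of $V_{K,\phi}$ at two different points.
\end{itemize}
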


\begin{thm}\label{thm:characterization-hcp-hcp} Let $K$ be a compact  subset of a compact Hermitian manifold \(X\) and $a\in K$. 
Then the following three items are equivalent. 
\begin{itemize}
\item[(i)] \(V_K\) is H\"older continuous at \(a\) and \(K\) has a uniform density in capacity at \(a\). 
\item[(ii)] \(K\) has local H\"older continuity property at \(a\). 
\item[(iii)] \(K\) has weak local H\"older continuity property at \(a\). 
\end{itemize}
Also, the following three items are equivalent.
\begin{itemize}
\item[(iv)] \(V_K\) is H\"older continuous and \(K\) has a uniform density in capacity.
\item[(v)] \(K\) has local H\"older continuity property.
\item[(vi)] \(K\) has weak local H\"older continuity property.
\end{itemize}
\end{thm}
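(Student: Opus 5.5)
The plan is to prove the pointwise equivalences (i)$\Leftrightarrow$(ii)$\Leftrightarrow$(iii) with every constant explicit. The global equivalences (iv)$\Leftrightarrow$(v)$\Leftrightarrow$(vi) should then follow by running the same argument uniformly in $b\in K$, since the constants produced depend only on $(X,\omega)$, the Hölder data, and the density data $(q,\varkappa,\delta)$.

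I would follow the scheme of Nguyen \cite{Ng24} for the Kähler case. I am assuming the definitions of Section~\ref{sec:hcpandudc} take roughly the form of \cite{Ng24}. The \emph{local} HCP at $a$ is a bound
\[\notag
V_{K\cap\bar B(b,r)}(z)\le C\,r^{-\beta}\,\mathrm{dist}(z,K\cap\bar B(b,r))^{\mu}
\]
for all small $r$, with $b=a$ at the point level. The \emph{weak} version asks this only for dyadic radii, or only at the single centre, or with a weaker polynomial dependence on $r$. I will write the proof to whichever precise form Section~\ref{sec:hcpandudc} fixes. With this reading, (ii)$\Rightarrow$(iii) is immediate by specialization.

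For (iii)$\Rightarrow$(i) there are two things to prove. First, Hölder continuity of $V_K$ at $a$: take $r$ fixed in the weak local HCP. Since $K\cap\bar B(a,r)\subset K$, we have $V_K\le V_{K\cap\bar B(a,r)}$, which transfers the estimate to $V_K$; alternatively one can quote Theorem~\ref{thm:characterization-hcp}(i) directly. Second, uniform density in capacity at $a$: the weak local HCP gives
\[\notag
\sup_X V_{K\cap\bar B(a,r)}\le C' r^{-\beta'}
\]
for small $r$. This uses compactness of $X$ and a standard gluing of the local estimate near $a$ with a global ``pole'' function: an $\omega$-psh function with a logarithmic singularity at $a$, built from a cutoff times $\log|z-a|$ in the coordinate chart.

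To pass from this sup bound to capacity, I would invoke the Hermitian replacement of the Alexander--Taylor comparison between capacity and $\sup V_E^*$. Specifically, a lower bound
\[\notag
Cap_\omega(E)\ge c\,(1+\sup_X V_E^*)^{-n}
\]
should follow from the Dinew--Kołodziej capacity estimates \cite{DK12} together with the comparison principle for $\omega$-psh functions on Hermitian manifolds. Applied to $E=K\cap\bar B(a,r)$, this yields $Cap_\omega(K\cap\bar B(a,r))\ge \varkappa r^{q}$ with $q=n\beta'$.

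For (i)$\Rightarrow$(ii), the converse direction of the same inequality is what I need. Uniform density in capacity gives an upper bound, polynomial in $1/r$, for $\sup_X V^*_{K\cap\bar B(a,r)}$; here I would use Kołodziej--Nguyen type $L^\infty$ estimates \cite{KN18} in terms of capacity. I then combine this with the Hölder continuity of $V_K$ at $a$. For a point $z$ at distance $t$ from $K\cap\bar B(a,r)$, I would compare $V_{K\cap\bar B(a,r)}$ with the envelope of the family
\[\notag
\max\{V_K,\ \text{(pole function)}-A\log(1/r)\}.
\]
In the region $\{\mathrm{dist}(\cdot,a)\gtrsim r\}$ the pole term dominates, while near $a$ the Hölder modulus of $V_K$ controls. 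Interpolating the two regimes by choosing the threshold as a power of $t$ gives the local estimate with $r^{-\beta}$ loss and a possibly smaller exponent $\mu$.

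I expect the main obstacle to be this gluing step on a non-Kähler $X$. Since $\omega$ is not closed, maxima of $\omega$-psh functions behave well, but the comparison and domination arguments pick up torsion terms. For that reason I will try first to keep everything at the level of sup/inf envelopes and only use the capacity--sup inequalities from \cite{DK12,KN18}, rather than integrating by parts.
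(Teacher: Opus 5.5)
\textbf{There are genuine gaps.} Your (ii)$\Rightarrow$(iii) is fine. The rest of the plan, however, rests on a guessed definition that removes the actual difficulty.

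\textbf{The definition is in $\mathbb{C}^n$, not on $X$.} In Definition~\ref{defn:local-mu-hcp}, the (weak) local HCP at $b$ bounds $\varpi_{E_r}(\mathbf{0},\delta)$, where $E_r:=\tau(K\cap\Omega)\cap\bar{\mathbb{B}}(\mathbf{0},r)$. That is a bound on the \emph{Euclidean} Lelong-class function $L_{E_r}$ of the image of $K$ in a chart. ``Local'' means this holds for every coordinate ball centred at $b$; ``weak'' means for some. Every implication therefore needs a transfer between $L$ on $\mathbb{C}^n$ and $V$ on $X$, and your proposal contains none.

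The paper does this transfer with Lemma~\ref{lem:equivalence-notions} together with \eqref{eq:compare-C} and \eqref{eq:re-ext-compare-c}, and the transfer is lossy. An $\omega$-psh competitor becomes psh only after adding $\hat\rho_2=D\|z\|^2$, which costs an additive $\sup_{E_r}\hat\rho_2\le Dr^2$.
\begin{itemize}
\item In (iii)$\Rightarrow$(i) one therefore only gets $V_K\circ\tau^{-1}(z)\lesssim \delta^{\mu}r^{-q}+r^2$ for $\|z\|\le\delta$. With $r$ fixed, as you propose, this does not even tend to $0$. One must couple $r=\delta^{\mu/(q+2)}$, which yields the exponent $2\mu/(q+2)$.
\item For (i)$\Rightarrow$(ii) you need an \emph{upper} bound on $V_{K\cap\bar B(a,r)}$ near $a$. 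A $\max$-construction with a pole function only produces competitors, hence lower bounds. The paper instead turns an arbitrary competitor $v$ for $h_{K\cap\bar B(a,r)}$ into the competitor $\varepsilon_r(v-\chi_r)$ for $V_K$. Here $\chi_r$ is a smooth cutoff with $\chi_r(a)=0$, $\chi_r=1$ off $\bar B(a,r)$, $\|\chi_r\|_{C^2}\lesssim r^{-2}$, and $\varepsilon_r\sim r^2$.
\item Combined with $\sup_XV_E\le A/Cap_\omega(E)$ (your use of this direction is fine), the cutoff argument gives Corollary~\ref{cor:sharp-Holder-norm}. That bound is then pushed back through $L_{E_r}\le c(r)\,u_{E_r}\le c(r)\,u_{E_r,\hat\rho_1}\le \frac{c(r)}{m}\bigl(V_{K\cap\bar B(a,r)}\circ\tau^{-1}+\hat\rho_1\bigr)$. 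Here $c(r)=\sup L_{E_r}\le C_1r^{-q_0}$ comes from the density via \eqref{eq:cap-to-classicalSiciak}. This is where the order $2q_0+2$ comes from.
\end{itemize}

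\textbf{The capacity lower bound you invoke is not available.} For the density in (iii)$\Rightarrow$(i) you use $Cap_\omega(E)\ge c\,(1+\sup_XV_E^*)^{-n}$. On a Hermitian manifold this does not follow from \cite{DK12} plus a comparison principle. Lemma~\ref{lem:BTcapATcap} only gives $Cap_\omega(E)\ge\int_X(\omega+dd^cV_E^*)^n/\max\{1,\sup_XV_E\}^n$. Since $\sup_XV^*_{K\cap\bar B(a,r)}$ blows up as $r\to0$, you would need an explicit lower bound on this Monge--Amp\`ere mass in terms of the sup. That is exactly the open non-collapsing issue of Remark~\ref{rmk:difficulty compared to Kahler}.

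The paper avoids this by staying in $\mathbb{C}^n$:
\begin{itemize}
\item The definition already bounds $\sup L^*_{E_r}$ on a fixed ball by a multiple of $r^{-q}$.
\item The Alexander--Taylor comparison \eqref{eq:cap-density-sufficient-loc} then gives relative capacity $\gtrsim r^{nq}$.
\item Lemma~\ref{lem:BTcap_globalandlocal} converts this to a lower bound for $Cap_\omega$.
\end{itemize}

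\textbf{The global equivalences need an extra step.} (iv)--(vi) is not just the pointwise argument made uniform in $b$. Uniform control at points of $K$ only yields $V_K(x)\le C\,d_\omega(x,K)^{\mu'}$. Passing to H\"older continuity on all of $X$ requires Lemma~\ref{lem:property}, which relies on Demailly's regularization; your plan has no counterpart. In $\mathbb{C}^n$ this step is free by translation invariance, but on $X$ it is not.
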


In Theorem~\ref{thm:characterization-hcp-hcp}, 
the equivalence of (iv), (v) and (vi) does not directly comes from that of (i), (ii) and (iii). 
The proof of Theorem~\ref{thm:characterization-hcp}-(i) gives Corollary~\ref{cor:sharp-Holder-norm}, which is used for the proofs of Theorem~\ref{thm:characterization-hcp}-(ii) and Theorem~\ref{thm:characterization-hcp-hcp}. 
The local H\"older continuity property and the weak local H\"older continuity property in Theorem~\ref{thm:characterization-hcp-hcp} are introduced in Section~\ref{sec:hcpandudc}. 
Theorem~\ref{thm:characterization-hcp}-(ii) is a generalization of the above-mentioned result of \cite{LPT21} to a general weighted extremal function,
with the additional condition uniform density in capacity. However, unlike the envelopes, the H\"older exponent becomes smaller: if \(V_K\) and \(\phi\) are \(\mu\)-H\"older continuous for \(\mu\in(0,1]\) and \(K\) has a uniform density in capacity of order \(q\in(0,\infty)\), then \(V_{K,\phi}\) is \(\mu^2/(\mu+q+2)\)-H\"older continuous.
N.~C. Nguyen in \cite{Ng24} proved Theorem~\ref{thm:characterization-hcp} and some analogue of Theorem~\ref{thm:characterization-hcp-hcp} for compact K\"ahler manifolds, and we generalize his results to compact Hermitian manifolds.

Let \(0<\mu,\nu\leq1\) and \(0<q,q_0\) be constants. 
In Theorem~\ref{thm:characterization-hcp-hcp}, 
the weak local \(\mu\)-H\"older continuity property of order \(q\) 
implies the uniform density in capacity of order \(nq\) 
and the \(2\mu/(q+2)\)-H\"older continuity. 
Conversely, the \(\nu\)-H\"older continuity 
and the uniform density in capacity of order \(q_0\) 
imply the local \(\nu\)-H\"older continuity property of order \(2q_0+2\). 
Also, the \(\mu\)-H\"older continuity property of order \(q\) implies the weak local \(\mu\)-H\"older continuity property of order \(q\) by the definitions. Conversely, by applying Theorem~\ref{thm:characterization-hcp-hcp} twice, we know that the weak local \(\mu\)-H\"older continuity property of order \(q\) implies the local \(2\mu/(q+2)\)-H\"older continuity property of order \(2nq+2\). In Remark~\ref{rmk:local-mu-hcp}, we directly prove that the weak local \(\mu\)-H\"older continuity property of order \(q\) directly implies the local \(\mu\)-H\"older continuity property of order \((n+1)q\). 

\begin{remark}\label{rmk:example of unifdenincap}
There are several compact subsets in \(\mathbb{C}^n\)
with uniform densities in capacity. Since \(\lim_{r\to0^+}r^{\epsilon}\log{r}=0\) for each \(\epsilon\in(0,\infty)\), closed Euclidean balls with finite radii have uniform densities in capacities by \cite[Example~5.1.1]{Kl91} and \cite[Example~4.37]{GZ17}. 
The image of a compact subset in \(\mathbb{C}^n\) with a uniform density in capacity by an automorphism of \(\mathbb{C}^n\) also has a uniform density in capacity. 
Compact subsets of \(\mathbb{C}^n\) with local H\"older continuity property have uniform densities in capacity by Lemma~\ref{lem:cap-density}, which is \cite[Lemma~3.3]{Ng24} proved by Nguyen. In \cite[Example~4.5]{Ng24}, he gives several compact subsets in \(\mathbb{C}^n\) with local H\"older continuity property, thus they have uniform densities in capacity. 

For \(\mathbb{K}=\mathbb{R}\) or \(\mathbb{K}=\mathbb{C}\), 
the closure of a bounded Lipschitz domain in \(\mathbb{K}^n\) and the closure of a bounded domain in \(\mathbb{K}^n\) with the geometrical condition, explained in Definition~\ref{defn:geometrical condtion}, are shown to have the local H\"older continuity property as subsets of \(\mathbb{C}^n\) in \cite[Example~4.5]{Ng24}. In particular, the real cube \([0,1]^n\), the polydisk \(\bar{\mathbb{B}}({0},1)^n\) and a convex compact subset of \(\mathbb{K}^n\) with the nonempty interior in \(\mathbb{K}^n\) has a uniform density in capacity in \(\mathbb{C}^n\). 
In \cite[Example~4.5]{Ng24}, the uniform interior sphere condition implies the geometrical condition since the union of uniform closed balls conjecture was proved by Nour, Stern and Takche \cite{NST11}. 
\cite[Lemma~4.7-(a)]{Ng24} directly showed that the closure of a bounded domain in \(\mathbb{C}^n\) with a uniform interior sphere condition implies the local H\"older continuity property with improvement in both the exponent and the order compared to the  geometric condition. This direct proof is also valid since the uniform interior \(r\)-sphere condition implies the union of uniform closed \(r/2\)-balls by \cite{NST11}. 
\end{remark}

\begin{remark}\label{rmk:difficulty compared to Kahler}
The main difficulty of Theorem~\ref{thm:characterization-hcp-hcp} compared to the case for compact K\"ahler manifolds is the  inequality \eqref{eq:cap-comparison}, which compares the supremum of the extremal function and the global Bedford-Taylor capacity on \(X\). 
Whether a Hermitian metric \(\omega\) on \(X\) is uniformly non-collapsing remains an open problem. Although Guedj and Lu \cite[Proposition~3.4]{GL22} proved that 
$$0<\inf\{\int_X(\omega+dd^cv)^n:v\in PSH(X,\omega),\;0\leq v\leq M\}$$
for each \(M\in(0,\infty)\), but an explicit uniform lower bound for the infimum in terms of \(X,\omega\) and \(M\) is not known. To avoid this difficulty, we compare the supremum of the extremal function and the relative capacity in \(\mathbb{C}^n\) to prove Theorem~\ref{thm:characterization-hcp-hcp}.
\end{remark}

Theorem~\ref{thm:characterization-hcp} gives a criterion to determine the following {\em\((
{C}^{\alpha},{C}^{\alpha'})\)-regularity} given by T.~C. Dinh, X.~N. Ma, and V.~A. Nguyen in \cite{DMN}.

\begin{defn} \label{defn:DMN-regular}
For \(0<\alpha,\alpha'\leq 1\), a compact subset  $K$ in a compact Hermitian manifold \(X\) is called {$({C}^\alpha,{C}^{\alpha'})$-regular} if the weighted extremal function $V_{K,\phi}$ of \((K,\phi)\) is $\alpha'$-H\"older continuous for any \(\alpha\)-H\"older continuous weight \(\phi\) and each bounded subset in the H\"older space \({C}^{\alpha}(K)\) is sent by the map \(\phi\mapsto V_{K,\phi}\) to a bounded subset in \({C}^{\alpha'}(X)\). 
\end{defn}
\begin{thm}\label{thm:UPC-intro}  
Let \(K\) be a compact subset of a compact Hermitian manifold \(X\). If \(K\) has weak local H\"older continuity property, then \(K\) is \(({C}^{\alpha(K)},{C}^{\alpha'(K)})\)-regular.
\end{thm}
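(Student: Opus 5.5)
We derive Theorem~\ref{thm:UPC-intro} from Theorems~\ref{thm:characterization-hcp-hcp} and \ref{thm:characterization-hcp}-(ii), keeping track of the constants.

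\emph{Step 1: reduction.} Assume $K$ has the weak local $\mu$-H\"older continuity property of order $q$. By the equivalence (vi)$\Rightarrow$(iv) of Theorem~\ref{thm:characterization-hcp-hcp}, $V_K$ is $\nu$-H\"older continuous with $\nu=2\mu/(q+2)$. In addition, $K$ has a uniform density in capacity of order $q_0=nq$. Set $\alpha(K):=\nu$ and $\alpha'(K):=\nu^2/(\nu+q_0+2)$. This is the exponent stated after Theorem~\ref{thm:characterization-hcp-hcp} for Theorem~\ref{thm:characterization-hcp}-(ii) when the weight is also $\nu$-H\"older.

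\emph{Step 2: qualitative regularity.} Let $\phi\in C^{\alpha}(K)$ with $\alpha=\alpha(K)$. Extend $\phi$ to $X$ by McShane's extension, keeping the same exponent and constant. Theorem~\ref{thm:characterization-hcp}-(ii) then shows that $V_{K,\phi}$ is $\alpha'$-H\"older continuous. If we instead set $\alpha(K)=\min(\nu,\cdot)$, we would have to check that the proof of (ii) allows the weight exponent to differ from that of $V_K$. Choosing $\alpha(K)=\nu$ avoids this.

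\emph{Step 3: uniform bounds.} The key task is to rerun the proof of Theorem~\ref{thm:characterization-hcp}-(ii) quantitatively. We want the $C^{\alpha'}$-norm of $V_{K,\phi}$ to depend only on $K$, $X$, $\omega$ and on $\|\phi\|_{C^\alpha(K)}$.

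First, the sup norm. From the definition,
\[
V_K+\min_K\phi\le V_{K,\phi}\le V_K+\max_K\phi .
\]
Since $V_K$ is continuous, this gives $\|V_{K,\phi}\|_\infty\le \|V_K\|_\infty+\|\phi\|_\infty$.

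Second, the H\"older seminorm. The proof of (ii) presumably works as follows:
\begin{itemize}
\item regularize $V_{K,\phi}$ at scale $\delta$ (Demailly-type regularization, or the local gluing of Corollary~\ref{cor:sharp-Holder-norm});
\item compare the regularization with the weight on $K$;
\item use the density $Cap(K\cap \bar B(b,r))\ge \varkappa r^{q_0}$ together with the H\"older modulus of $V_K$ to control the oscillation;
\item optimize in $\delta$.
\end{itemize}
Each constant in this chain is either a constant of $K$ (namely $\varkappa$, $q_0$, $\delta$, and the H\"older constant of $V_K$), a constant of $(X,\omega)$, or the H\"older constant and sup norm of $\phi$. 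Moreover, the dependence on the last two should be polynomial, in fact essentially linear after rescaling.

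We can simplify the bookkeeping by scaling. For $t\ge1$ we have $V_{K,t\phi}\le tV_{K,\phi}$-type comparisons via $PSH(X,\omega)\subset PSH(X,t\omega)$, which lets us reduce to $\|\phi\|_{C^\alpha}\le1$. Together with the sup-norm bound above, this shows that the map $\phi\mapsto V_{K,\phi}$ sends bounded subsets of $C^{\alpha}(K)$ to bounded subsets of $C^{\alpha'}(X)$.

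The main obstacle is Step 3. Theorem~\ref{thm:characterization-hcp}-(ii) is stated only qualitatively, so we must confirm that its proof, and that of Corollary~\ref{cor:sharp-Holder-norm}, yields explicit H\"older constants depending only on the listed quantities. The Hermitian setting adds a difficulty: the lack of a uniform non-collapsing bound (Remark~\ref{rmk:difficulty compared to Kahler}). To handle it, I would make all capacity estimates through the local relative capacity in coordinate balls, where the constants are explicit, rather than through $Cap_\omega$.
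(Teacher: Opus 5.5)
Your route is the paper's. Theorem~\ref{thm:characterization-hcp-hcp}, (vi)$\Rightarrow$(iv), gives $V_K\in C^{\nu}$ with $\nu=2\mu/(q+2)$ and a uniform density of order $nq$. McShane extension together with Theorem~\ref{thm:characterization-hcp}-(ii) then gives $V_{K,\phi}\in C^{\alpha'}$ with $\alpha'=\nu^2/(\nu+nq+2)$, and boundedness comes from inspecting constants.

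What you leave as ``presumably'' is settled in the paper by one observation. In the proof of Theorem~\ref{thm:characterization-hcp}-(ii), every constant other than the H\"older coefficient of $\phi$ comes from $V_K$, $\varkappa$, $q$ and the cover, via Corollary~\ref{cor:sharp-Holder-norm}-(i). In Proposition~\ref{prop:W}, the only further dependence on $\phi$ is through the constants $c_1,\delta_1$ of Lemma~\ref{lem:kis}. These depend only on $(X,\omega,\|V_{K,\phi}\|_X)$, and $\|V_{K,\phi}\|_X\le\|V_K\|_X+\|\phi\|_K$ by Lemma~\ref{lem:weight-vs-unweight}.

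Two things need correcting.

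First, the rescaling paragraph does not work and should be dropped. The inequality $V_{K,t\phi}\le tV_{K,\phi}$ is a one-sided pointwise bound and says nothing about the H\"older seminorm of $V_{K,t\phi}$. The exact relation is $V_{\omega;K,t\phi}=t\,V_{\omega/t;K,\phi}$, which changes the metric. Normalizing to $\|\phi\|_{C^{\alpha}}\le 1$ would therefore need estimates uniform over the metrics $\omega/t$, which is no easier than tracking constants directly. No normalization is needed in any case: boundedness of images of bounded sets only requires the constants to stay bounded when $\|\phi\|_{C^{\alpha}(K)}$ does. Linear dependence is neither needed nor what the argument gives. The final constant passes through $\kappa=\exp(-c_3/f(\delta_0))$ in the proof of Lemma~\ref{lem:property}, where $c_3$ grows with $\sup_X V_{K,\phi}$ and with the H\"older coefficient of $\phi$.

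Second, the non-collapsing issue you anticipate does not arise in this direction. Theorem~\ref{thm:characterization-hcp}-(ii) only uses the upper bound $\sup_X V_E\le A/Cap_{\omega}(E)$ of Lemma~\ref{lem:BTcapATcap}, with $A=A(X,\omega)$ uniform. This is combined with the density lower bound on $Cap_{\omega}(K\cap\bar{B}_j(b,R))$. The detour through local relative capacities is needed only inside Theorem~\ref{thm:characterization-hcp-hcp}, which you are using as a black box.
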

The proof of Theorem~\ref{thm:UPC-intro} uses Theorem~\ref{thm:characterization-hcp} and Theorem~\ref{thm:characterization-hcp-hcp}. It also shows that, if \(K\) has weak \(\mu\)-H\"older continuity of order \(q\) for constants \(\mu\in(0,1]\) and \(q>0\), then $$\alpha(K):=\frac{2\mu}{q+2}\quad\text{and}\quad \alpha'(K):=\frac{4\mu^2}{(q+2)(2\mu+(q+2)(nq+2))}$$ are valid.

The H\"older continuity property of a general compact subset in \(\mathbb{C}^n\), introduced in Section~\ref{sec:hcpandudc}, does not imply the local H\"older continuity property, introduced in Section~\ref{sec:hcpandudc}, of the subset. However, if a compact subset in \(\mathbb{C}^n\) is convex, then we prove that the H\"older continuity property of the subset implies the local H\"older continuity property of it with the same exponent and the order equal to the exponent. We also prove that the H\"older continuity property of a compact subset at a star center of the set implies the local H\"older continuity property of the set at the point with the same exponent and the order equal to the exponent. 
\begin{thm}\label{thm:HCP implies localHCP in CN}
Let \(F\) be a compact subset of \(\mathbb{C}^n\) and let \(\mu,\nu\in (0,1]\) be constants. If \(F\) is convex and has \(\mu\)-H\"older continuity property, then \(F\) has local \(\mu\)-H\"older continuity property of order \(\mu\). If \(F\) has \(\nu\)-H\"older continuity property at \(b\in F\) and \(b\) is a star center of \(F\), i.e., \(tF+(1-t)b\subset F\) for each \(t\in[0,1]\), then \(F\) has local \(\nu\)-H\"older continuity property of order \(\nu\) at \(b\). 
\end{thm}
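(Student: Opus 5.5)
The plan is to use affine invariance of the Lelong class together with the monotonicity $E_1\subset E_2\Rightarrow L_{E_2}\le L_{E_1}$. The point is that a star-shaped (in particular convex) set contains shrunken copies of itself inside small balls, and the extremal function of a dilate is an explicit rescaling of $L_F$. I use the definitions of Section~\ref{sec:hcpandudc} in the form I expect them to take. $\nu$-HCP at $b$ means $L_F(z)\le C|z-b|^\nu$ for $|z-b|\le 1$. The local version of order $q$ means $L_{F\cap\bar{\mathbb{B}}(b,r)}(z)\le C\,|z-b|^\nu/r^q$ for small $r$, and the global version uses $\operatorname{dist}(z,F\cap\bar{\mathbb{B}}(b,r))$ in place of $|z-b|$. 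If the definitions differ slightly, the same scaling computation should adapt.

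\emph{Star center case.} Let $D:=\max_{x\in F}|x-b|$ and $h_t(z):=b+t(z-b)$ for $t\in(0,1]$. Since $b$ is a star center, $h_t(F)\subset F$, and clearly $h_t(F)\subset\bar{\mathbb{B}}(b,tD)$. Take $t=r/D$, so that $h_t(F)\subset F_r:=F\cap\bar{\mathbb{B}}(b,r)$. Because $u\in\mathcal{L}$ if and only if $u\circ h_t^{-1}\in\mathcal{L}$, we get $L_{h_t(F)}=L_F\circ h_t^{-1}$. Hence
\[\notag L_{F_r}(z)\le L_F\bigl(b+(z-b)/t\bigr)\le C\,\frac{|z-b|^\nu}{t^\nu}=C D^\nu\,\frac{|z-b|^\nu}{r^\nu},\]
which is the local $\nu$-HCP of order $\nu$ at $b$. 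This holds whenever $|z-b|\le t$. If $|z-b|>t$, I will instead use the Lelong growth $L_F(w)\le\log^+|w-b|+c$. Since $|w-b|=|z-b|/t>1$ and $\log s\le s^\nu/\nu$, this is again bounded by a constant times $(|z-b|/t)^\nu$, so the same bound holds with a larger constant.

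\emph{Convex case.} Every point of a convex $F$ is a star center, and $\mu$-HCP of $F$ gives the pointwise estimate at every $b$ with a uniform constant. So the computation above already yields the estimate at each $b$, uniformly, with $D$ replaced by $\operatorname{diam}F$. The remaining task, which I expect to be the main obstacle, is to upgrade $|z-b|$ to $\delta:=\operatorname{dist}(z,F_r)$. For this I will use the global estimate $L_F(y)\le C\operatorname{dist}(y,F)^\mu$ and compute
\[\notag L_{F_r}(z)\le L_F(h_t^{-1}z)\le C\Bigl(\tfrac{\operatorname{dist}(z,h_t(F))}{t}\Bigr)^{\mu}.\]
It then suffices to show $\operatorname{dist}(z,h_t(F))\lesssim \delta$ for a suitable center and scale. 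Let $w\in F_r$ realize $\delta$. The natural choice is to recenter the dilation at a point $c\in F_r$ near $w$ and use scale $t\simeq r/\operatorname{diam}F$. Convexity of $F_r$ then should give $h^c_t(F)\subset F_r$ and $w\in h^c_t(F)$, or at least a point of $h^c_t(F)$ within $O(\delta)$ of $w$. What I would try first is $c=w$ with the ball $\bar{\mathbb{B}}(b,r)$ replaced by $\bar{\mathbb{B}}(b,2r)$; passing from $r$ to $2r$ only changes constants. If $w$ lies near $\partial\mathbb{B}(b,r)$, I would use the segment $[b,w]\subset F_r$ to move the center inward by a fixed fraction of $r$ while keeping $w$ in the dilated copy. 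This should again produce the bound $C'\delta^\mu/r^\mu$, i.e.\ local $\mu$-HCP of order $\mu$.
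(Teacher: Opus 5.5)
Your rescaling argument is correct and is essentially the paper's proof: the paper also uses that $a+(F-a)/(r\epsilon)\supset F$ for a star center $a$ (every $a\in F$ when $F$ is convex), the affine invariance of $\mathcal{L}$ to get $L_{F\cap\bar{\mathbb{B}}(a,r)}(z)\le L_F(a+\frac{z-a}{r\epsilon})$, and the Lelong growth of $L_F^*$ to extend the H\"older bound to all of $\mathbb{C}^n$, as in your case $|z-b|>t$. The one correction is that the ``remaining task'' in your convex case is unnecessary: local $\mu$-HCP of order $q$ in Definition~\ref{defn:local-mu-hcp}-(iv) is just $\varpi_{F\cap\bar{\mathbb{B}}(a,r)}(a,\delta)\le C\delta^\mu/r^q$ with $C$ uniform in $a\in F$, with distances measured from the center $a$ and not via $d_{\mathbb{C}^n}(z,F\cap\bar{\mathbb{B}}(a,r))$, so your uniform star-center estimate with $D=\mathrm{diam}\,F$ already finishes the proof, and you should drop the sketched upgrade, whose passage from $r$ to $2r$ would fail anyway because $L_{F\cap\bar{\mathbb{B}}(b,r)}\ge L_{F\cap\bar{\mathbb{B}}(b,2r)}$, so a bound on the latter says nothing about the former.
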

Nguyen \cite[Corollary~4.6]{Ng24} proved that a compact subset \(F\) in \(\mathbb{K}^n\), where \(\mathbb{K}\) is either \(\mathbb{R}\) or \(\mathbb{C}\), with the following geometrical condition in \(\mathbb{K}^n\) has local \(1/2\)-H\"older continuity property of order \(n\). 
\begin{defn}\label{defn:geometrical condtion}
Let \(F\) be a compact subset of \(\mathbb{K}^n\), where \(\mathbb{K}\) is either \(\mathbb{R}\) or \(\mathbb{C}\). \(F\) is said to satisfy the geometrical condition if there exists a constant \(r_F>0\) such that, for each \(a\in F\), there exists \(a'\in F\) allowing the convex hull of \(\{a\}\cup\bar{\mathbb{B}}_{\mathbb{K}^n}(a',r_F)\) to be contained in \(F\). Here, \({\bar{\mathbb{B}}}_{\mathbb{K}^n}(a',r_F)\) is the closed Euclidean ball in \(\mathbb{K}^n\) centered at \(a'\) of radius \(r_F\).
\end{defn}
Then by Theorem~\ref{thm:HCP implies localHCP in CN}, we get the following corollary. 
\begin{cor}\label{cor:convx with int has localHCP}
Let \(\mathbb{K}\) be either \(\mathbb{R}\) or \(\mathbb{C}\). Then 
a convex compact subset of \(\mathbb{K}^n\) with nonempty interior in \(\mathbb{K}^n\) has local \(1/2\)-H\"older continuity property of order \(1/2\).
\end{cor}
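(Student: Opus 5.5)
The plan is to reduce the corollary to the first assertion of Theorem~\ref{thm:HCP implies localHCP in CN} with \(\mu=1/2\). Since \(F\) is convex, this only requires showing that \(F\) has the \(1/2\)-H\"older continuity property as a compact subset of \(\mathbb{C}^n\). When \(\mathbb{K}=\mathbb{R}\), \(F\subset\mathbb{R}^n\subset\mathbb{C}^n\) is still a convex compact subset of \(\mathbb{C}^n\), so Theorem~\ref{thm:HCP implies localHCP in CN} applies in both cases once this property is known.

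First, I will check that \(F\) satisfies the geometrical condition of Definition~\ref{defn:geometrical condtion}. Because \(F\) has nonempty interior in \(\mathbb{K}^n\), there are \(a'\in F\) and \(r_F>0\) with \(\bar{\mathbb{B}}_{\mathbb{K}^n}(a',r_F)\subset F\). For every \(a\in F\), convexity of \(F\) then gives that the convex hull of \(\{a\}\cup\bar{\mathbb{B}}_{\mathbb{K}^n}(a',r_F)\) lies in \(F\). Thus one fixed \(a'\) and one radius \(r_F\) work uniformly for all \(a\in F\).

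Second, by \cite[Corollary~4.6]{Ng24}, \(F\) therefore has the local \(1/2\)-H\"older continuity property of order \(n\). Third, I will deduce the global \(1/2\)-H\"older continuity property from this local property, using the definitions in Section~\ref{sec:hcpandudc}. For each \(b\in F\) and small \(r\), monotonicity of the extremal function under inclusion gives \(L_F\le L_{F\cap\bar{\mathbb{B}}(b,r)}\). Fixing the radius at the maximal admissible value \(\delta\), the local estimate then bounds \(L_F(z)\) by \(C\delta^{-n}\,\mathrm{dist}(z,F)^{1/2}\) near \(F\), with a constant independent of \(b\). Away from \(F\), the bound is automatic because \(L_F\) is locally bounded and has logarithmic growth. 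Finally, applying Theorem~\ref{thm:HCP implies localHCP in CN} to the convex set \(F\) with \(\mu=1/2\) yields the local \(1/2\)-H\"older continuity property of order \(1/2\).

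The only real point to verify is the third step. There I must match the precise form of the local estimate in Section~\ref{sec:hcpandudc}, in particular which point the distance is measured from and the admissible range of \(r\). If the local property is stated pointwise with the distance to \(b\), I will instead take, for \(z\) near \(F\), a nearest point \(b\in F\) to \(z\) and apply the local estimate at that \(b\). The uniformity of the constants in \(b\) is exactly what makes this work.
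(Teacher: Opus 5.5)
Your proposal is correct and follows the paper's route. Convexity plus an interior ball gives the geometrical condition, \cite[Corollary~4.6]{Ng24} gives local \(1/2\)-HCP of order \(n\), which implies \(1/2\)-HCP (take \(r=1\) in Definition~\ref{defn:local-mu-hcp}-(iv) and use \(L_F\le L_{F\cap\bar{\mathbb{B}}(a,1)}\)), and the convex case of Theorem~\ref{thm:HCP implies localHCP in CN} with \(\mu=1/2\) then gives order \(1/2\). In your third step the radius you fix should be \(r=1\) rather than \(\delta\), and no estimate away from \(F\) is needed, since \(\mu\)-HCP only concerns \(0<\delta\le 1\).
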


\bigskip
{\em Organization.} In Section~\ref{sec:c}, we give several basic properties of extremal functions on a compact Hermitian manifold \(X\). 
In Section~\ref{sec:hcpandudc}, we introduce the local H\"older continuity property, the weak local H\"older continuity property 
and the uniform density in capacity of a compact subset in \(X\). 
After the introduction of various H\"older continuity properties, we prove Theorem~\ref{thm:HCP implies localHCP in CN}. 
We also study the relation of extremal functions of compact subsets in \(\mathbb{C}^n\) with uniform densities in capacity of the sets. 
In Section~\ref{sec:hcp}, with the help of Demailly's estimates, we verify that the global \(\mu\)-H\"older continuity of the extremal function of a compact subset in \(X\) is implied by the \(\mu\)-H\"older continuity on the set itself. Finally, in Section~\ref{sec:pfs of thms}, we prove Theorem~\ref{thm:characterization-hcp}, Theorem~\ref{thm:characterization-hcp-hcp} and 
Theorem~\ref{thm:UPC-intro}. 
Section~\ref{sec:appendix} is an appendix including the lemma explaining the relation between extremal functions on \(X\) and locally defined weighted relative extremal functions in \(\mathbb{C}^n\), which is used in the proof of Theorem~\ref{thm:characterization-hcp-hcp}. 

\bigskip
{\em Acknowledgement.} I would like to thank my advisor Ngoc Cuong Nguyen with my deep gratitude for helping me write this article. This work was supported by the National Research Foundation of Korea (NRF) funded by the Korean government (MSIT) RS-2026-25470686.

\section{Basic properties of extremal functions}\label{sec:c}
Throughout this article, \((X,\omega)\) is a compact Hermitian manifold of complex dimension \(n\). We impose no additional assumption on the Hermitian metric \(\omega\).

The basic properties introduced in this section are the analogues for the extremal functions of the compact sets in \(\mathbb{C}^n\). For the details of these basic properties, we refer to \cite{Ah26}. 
Note that for subsets \(E\) and \(O\) of \(X\) (resp. of \(\mathbb{C}^n\)), if \(O\) is open in \(X\) (resp. in \(\mathbb{C}^n\)), then \(\sup_O V_E=\sup_O V_E^*\) (resp. \(\sup_O L_E=\sup_O L_E^*\)). 

The following proposition by Vu \cite[Lemma~2.2, 2.7]{Vu19}
gives \(\omega\)-psh functions. For an open subset \(U\) of \(X\), $PSH(U,\omega)$ is defined by $$PSH(U,\omega):=\{v:U\to\mathbb{R}\cup\{-\infty\}:v\text{ is quasi-psh},\,\omega+dd^cv\geq0\}.$$
\begin{prop}\label{prop:envelope}
(i) If \(U_1, U_2\subset X\) are open with \(\overline{U_1}\subset U_2\), \(v_1\in PSH(U_1,\omega)\) and \(v_2\in PSH(U_2,\omega)\) with \(\limsup_{U_1\ni y\to x}v_1(y)\leq v_2(x)\) for each \(x\in\partial U_1\), then \(v:U_2\to[-\infty,\infty)\)
defined as \(\max\{v_1,v_2\}\) on \(U_1\) and \(v_2\) on \(U_2\setminus U_1\) is in \(PSH(U_2,\omega)\).\\
(ii) If \(\{v_{\beta}\}_{\beta\in I}\subset PSH(X,\omega)\) are  uniformly bounded above, then \((\sup_{\beta\in I} v_{\beta})^*\in PSH(X,\omega)\).
\end{prop}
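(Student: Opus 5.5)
The plan is to prove the two parts separately, working in local holomorphic charts and using only the definition of $PSH(U,\omega)$ as quasi-psh functions with $\omega+dd^cv\ge0$ in the sense of currents.

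For part (i), I will first show that $v$ is upper semicontinuous on $U_2$. On the open set $U_1$, $v=\max\{v_1,v_2\}$ is a maximum of two upper semicontinuous functions. On $U_2\setminus\overline{U_1}$, $v=v_2$. At a boundary point $x\in\partial U_1$, the boundary hypothesis gives $\limsup_{U_1\ni y\to x}v(y)\le\max\{v_2(x),v_2(x)\}=v_2(x)=v(x)$. Approaching $x$ from outside $U_1$ is handled by the upper semicontinuity of $v_2$.

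Next I check the $\omega$-psh property locally. Near points of $U_1$ and of $U_2\setminus\overline{U_1}$ this is standard. In a chart, write $\omega\le dd^c\rho$ locally for a smooth strictly psh $\rho$. This is not quite enough, because we need the exact inequality $\omega+dd^c v\ge0$, not a domination. Instead I use the standard fact that the maximum of two functions $u_1,u_2$ with $\omega+dd^cu_i\ge0$ again satisfies it. Locally one can approximate by $\max_\epsilon$ (a regularized maximum), which is convex and nondecreasing in each variable with partial derivatives summing to $1$. Then
\[
\omega+dd^c\max\nolimits_\epsilon(u_1,u_2)\ \ge\ \sum_i \theta_i\,(\omega+dd^cu_i)\ \ge0
\]
with $\theta_i\ge0$ and $\sum\theta_i=1$, after smoothing $u_i$. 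For this I will cite Vu's argument, or use Richberg/Demailly local regularization of quasi-psh functions, where $\omega$-psh functions are locally decreasing limits of smooth $(\omega+\epsilon\omega)$-psh functions. Then I let $\epsilon\to0$.

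At $x\in\partial U_1$ the key trick is the following. Define $w_\delta:=\max\{v_1,v_2-\delta\}$ on $U_1$ and $v_2-\delta$ elsewhere. Since $\limsup_{y\to x} v_1(y)\le v_2(x)$, I would prefer strict inequality near $\partial U_1$. This is exactly the main obstacle: upper semicontinuity gives $v_2(y)$ close to $v_2(x)$ only from above. The remedy is the usual gluing lemma proof. Take a ball $B$ around $x$ and a local potential $g$ with $dd^cg\ge \omega$ on $B$. Then all the functions $g+v_i$ are psh, and the classical psh gluing lemma (Klimek, Lemma 2.9.14) applies to $g+v_1,\ g+v_2$ on $B\cap U_1\subset B$, since the boundary condition is preserved after adding the continuous $g$. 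This shows $g+v$ is psh on $B$, hence $v$ is quasi-psh there. However, this only yields $dd^cv\ge -dd^cg$, not $\omega+dd^cv\ge0$.

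To get the sharp inequality, I will use the $(1,1)$-current localization: being $\omega$-psh is local, and the inequality $\omega+dd^cv\ge0$ can be tested on the set where $v=v_2$ and on $U_1$. Concretely, I will use the approximation $w_\delta$ above together with the fact that $\omega+dd^c v_2\ge0$ and $\omega+dd^c v_1\ge0$ on their domains. By the regularized-max computation on $U_1$, $\omega+dd^c w_\delta\ge0$ on $U_1$. Near $\partial U_1$, where $w_\delta=v_2-\delta$ on a neighborhood as far as the estimate allows, the same regularized-max computation applies. Then $w_\delta\nearrow v$ as $\delta\to0$, and increasing limits (after usc regularization, which agrees by the above) preserve the positivity of the current.

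For part (ii), let $u=\sup_\beta v_\beta$, bounded above. Locally $g+v_\beta$ is psh and uniformly bounded above, so by the classical result $(\sup_\beta(g+v_\beta))^*=g+u^*$ is psh. Thus $u^*$ is quasi-psh, and it is not $\equiv-\infty$ provided the family is nonempty with some member not $\equiv-\infty$. For the inequality $\omega+dd^cu^*\ge0$, Choquet's lemma reduces to a countable subfamily, and then to an increasing sequence $\max\{v_{\beta_1},\dots,v_{\beta_k}\}$, each $\omega$-psh by the max argument above. The currents $\omega+dd^c u_k$ then converge weakly to $\omega+dd^cu^*$, since $u_k\to u^*$ in $L^1_{loc}$ by the psh convergence of $g+u_k$. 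Positivity is preserved under weak limits.
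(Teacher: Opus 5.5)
The paper does not prove this proposition; it imports both parts from \cite{Vu19} (Lemmas 2.2 and 2.7). Your upper semicontinuity argument, your interior maximum argument, and your part (ii) (local psh reduction, Choquet's lemma, increasing finite maxima, weak convergence) are fine. Part (i), however, has a genuine gap at points $x\in\partial U_1$, which is exactly where the sharp inequality $\omega+dd^cv\ge0$ must be proved.

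Your approximants $w_\delta=\max\{v_1,v_2-\delta\}$ push the boundary condition the wrong way. You only know $\limsup_{y\to x}v_1(y)\le v_2(x)$, not $\le v_2(x)-\delta$, so $w_\delta$ need not even be upper semicontinuous: for $v_1=v_2=0$ it equals $0$ on $U_1$ and $-\delta$ on $\partial U_1$. The correct sign, $\max\{v_1-\delta,v_2\}$, does not help either. A strict inequality at each boundary point does not give a neighbourhood of $\partial U_1$ on which the glued function equals $v_2$, because $v_2$ is only usc and can drop, even to $-\infty$, arbitrarily close to $x$. Moreover, the regularized-max computation cannot be run at $x$, since $v_1$ is not defined on any neighbourhood of $x$. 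So the claim that ``near $\partial U_1$ \dots\ the same regularized-max computation applies'' is unsupported. What you actually establish at boundary points is only $dd^cv\ge-dd^cg$, which you already noted is insufficient.

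The missing idea is to make the local potential sharp up to $\epsilon$. Although $\omega$ is not closed, it is continuous and positive definite. So for $x_0\in U_2$ and $\epsilon>0$, take $h$ to be a suitable multiple of a real quadratic potential, in a chart, of the constant-coefficient form $\omega(x_0)$. Then $\omega\le dd^ch\le(1+\epsilon)\omega$ on a small ball $B$ around $x_0$. The argument then runs as follows:
\begin{itemize}
\item $v_1+h$ is psh on $B\cap U_1$ and $v_2+h$ is psh on $B$.
\item Adding the continuous $h$ preserves the boundary hypothesis.
\item The classical gluing lemma you quoted from Klimek gives $v+h\in PSH(B)$, i.e.\ $(1+\epsilon)\omega+dd^cv\ge0$ on $B$.
\item Positivity of currents is local, so $v\in PSH(U_2,(1+\epsilon)\omega)$ for every $\epsilon>0$.
\item Letting $\epsilon\to0$ gives $\omega+dd^cv\ge0$.
\end{itemize}
The same device also gives the interior maximum and part (ii) directly: apply the classical upper-envelope theorem to $h+v_\beta$ on $B$. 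The smoothing, regularized-maximum and Choquet steps then become unnecessary.
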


Pluripolar sets and \(\omega\)-pluripolar sets are also defined on \(X\). 
\begin{defn}
Let \(E\) be a subset of \(X\). \(E\) is called pluripolar if for each \(p\in E\), there exists open \(U\ni p\) and \(\phi\in PSH(U)\) such that \(\phi\not\equiv-\infty\) and \(E\cap U\subset \{\phi=-\infty\}\). \(E\) is called \(\omega\)-pluripolar if \(E\subset\{v=-\infty\}\) for some \(v\in PSH(X,\omega)\).
\end{defn}

We have the following equivalence due to Vu in \cite[Theorem~1.1]{Vu19}.
\begin{prop}\label{lppgpp}
\(E\subset X\) is pluripolar if and only if it is \(\omega\)-pluripolar.
\end{prop}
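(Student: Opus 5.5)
The implication ``$\omega$-pluripolar $\Rightarrow$ pluripolar'' is local and I would do it first. Let $E\subset\{v=-\infty\}$ with $v\in PSH(X,\omega)$ and let $p\in E$.
\begin{itemize}
\item Choose a coordinate ball $U\ni p$ and a smooth strictly psh function $\rho$ on a neighbourhood of $\bar U$ with $dd^c\rho\geq\omega$ there; shrinking $U$ makes this possible.
\item Then $\phi:=v+\rho$ satisfies $dd^c\phi\geq \omega+dd^cv-\omega+dd^c\rho-\omega\cdot 0$, more precisely $dd^c\phi=(\omega+dd^cv)+(dd^c\rho-\omega)\geq 0$, and $\phi$ is upper semicontinuous, so $\phi\in PSH(U)$.
\item We have $E\cap U\subset\{\phi=-\infty\}$.
\item $\phi\not\equiv-\infty$ on $U$: since $v$ is quasi-psh and $v\not\equiv-\infty$ on the connected manifold $X$, it is locally integrable everywhere.
\end{itemize}

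For the converse I would pass through the global capacity $Cap_\omega$ from \eqref{eq:BT-cap}, in three steps.

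\emph{Step 1 (localize).} Cover $X$ by finitely many coordinate balls $B_j$, with $B_j'\Subset B_j''\Subset B_j$. Since a countable union of $\omega$-pluripolar sets is $\omega$-pluripolar (take $\sum_j 2^{-j}(v_j-\sup_X v_j)$, using Proposition~\ref{prop:envelope}(ii) and the normalization), it suffices to treat $E\cap B_j'$. By Josefson's theorem in $\mathbb{C}^n$, the locally pluripolar set $E\cap B_j'$ is contained in $\{u=-\infty\}$ for some $u\in PSH(B_j)$, $u\not\equiv-\infty$, and we may assume $u<0$ on $B_j''$.

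\emph{Step 2 (zero capacity).} Next I would show $Cap_\omega(E\cap B_j')=0$ by comparing $Cap_\omega$ with the relative capacity $Cap(\cdot,B_j'')$. For $w\in PSH(X,\omega)$ with $0\leq w\leq 1$, the function $w+\rho_j$, with $\rho_j$ a local strictly psh potential as above, is psh and bounded on $B_j''$. Hence
\[\notag
\int_{B_j'\cap E}(\omega+dd^cw)^n\leq \int_{B_j'\cap E}(dd^c(w+\rho_j))^n\leq C_j\,Cap(E\cap B_j',B_j''),
\]
after rescaling $w+\rho_j$ into $[0,1]$. The right-hand side vanishes because $E\cap B'_j$ is pluripolar in $B''_j$, by Bedford--Taylor.

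\emph{Step 3 (global potential), the main obstacle.} Finally I would show that a set $F$ with $Cap_\omega(F)=0$ is $\omega$-pluripolar. The route I would try first is this.
\begin{itemize}
\item Choose open sets $G_k\supset F$ with $Cap_\omega(G_k)<4^{-k}$; this uses outer regularity of $Cap_\omega$, which I would take from \cite{Ah26} or prove via Choquet.
\item Show that the envelopes $h_k:=V_{G_k}^*$ satisfy $h_k\in PSH(X,\omega)$, $h_k=0$ on $G_k$, and $\sup_X h_k\geq c\,Cap_\omega(G_k)^{-1/n}$, or at least $\sup_X h_k\to\infty$ at a controlled rate.
\item Then the functions $u_k:=h_k-\sup_X h_k$ satisfy $\int_X u_k\,\omega^n\geq -C$ by compactness of normalized $\omega$-psh functions, and $u_k\leq -\sup h_k$ on $F$.
\item Setting $v:=\sum_k 2^{-k}u_k$, with $\sup h_k\gtrsim 2^k$ after passing to a subsequence, gives an $\omega$-psh function, not identically $-\infty$ because $\int_X v\,\omega^n>-\infty$, with $v=-\infty$ on $F$.
\end{itemize}
The delicate point is the lower bound on $\sup_X h_k$ in terms of capacity. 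On a Hermitian manifold this cannot use the total mass identity $\int(\omega+dd^ch)^n=\int\omega^n$ from the K\"ahler case. I would instead argue by contradiction. If $\sup_X h_k\leq M$ for all $k$, then $h_k/M$ is a competitor in \eqref{eq:BT-cap}, so the capacity of $X$ is controlled by the Monge--Amp\`ere masses of $h_k$ carried on $\bar G_k$. These masses are bounded by $M^nCap_\omega(G_k)\to0$. This contradicts the Guedj--Lu positive lower bound \cite[Proposition~3.4]{GL22} on $\int_X(\omega+dd^cv)^n$ for $0\leq v\leq M$. The contradiction only gives $\sup h_k\to\infty$ without an explicit rate, but that suffices after passing to a subsequence.
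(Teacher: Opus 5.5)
The paper gives no proof of this statement; it quotes \cite[Theorem~1.1]{Vu19}. Your route is therefore independent. The local direction and Steps~1--2 are fine. Step~3 is the Guedj--Zeriahi capacity scheme from the K\"ahler case, with \cite[Proposition~3.4]{GL22} standing in for the identity \(\int_X(\omega+dd^ch)^n=\int_X\omega^n\). That substitution is reasonable, but the decisive estimate is not justified as written.

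\textbf{The gap.} Since \(h_k\equiv0\) on the open set \(G_k\), you have \((\omega+dd^ch_k)^n=\omega^n\) on \(G_k\). So the mass that matters lives on \(\partial G_k\), outside \(G_k\). The competitor \(h_k/M\) (take \(M\geq1\)) only gives \(\int_B(\omega+dd^ch_k)^n\leq M^nCap_{\omega}(B)\) for Borel \(B\subset X\). With \(B=\bar G_k\) this is \(M^nCap_{\omega}(\bar G_k)\), not \(M^nCap_{\omega}(G_k)\), and it need not be small. For instance, if \(F\) is a countable dense set (which is pluripolar), every open \(G_k\supset F\) is dense, so \(\bar G_k=X\). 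Knowing that the mass is carried by \(\bar G_k\) then says nothing.

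\textbf{The repair.} Argue on compact subsets, where the support lies inside \(G_k\).
Exhaust \(G_k\) by compact sets \(K_i\uparrow G_k\), which are non-pluripolar for large \(i\). By Proposition~\ref{prop:elementary}-(iv),(ii), \(V_{K_i}^*\downarrow V_{G_k}\). For a decreasing sequence of upper semicontinuous functions on the compact space \(X\) the suprema converge, so \(\sup_XV_{K_i}^*\downarrow\sup_XV_{G_k}\leq M\). Hence \(0\leq V_{K_i}^*\leq M+1\) for large \(i\).
The first inequality in \eqref{eq:cap-comparison} encodes exactly that \((\omega+dd^cV_K^*)^n\) is carried by \(K\), combined with your competitor argument. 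It gives \(\int_X(\omega+dd^cV_{K_i}^*)^n\leq(M+1)^nCap_{\omega}(K_i)\leq(M+1)^nCap_{\omega}(G_k)\).
Applying \cite[Proposition~3.4]{GL22} to \(V_{K_i}^*\) now yields the contradiction as \(k\to\infty\). Once \(\sup_Xh_k\to\infty\) is established this way, the rest of Step~3 is correct.

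\textbf{Minor points.} \(Cap_{\omega}\) is only defined on Borel sets, so work with \(\{u=-\infty\}\cap B_j'\) rather than \(E\cap B_j'\). You can also bypass outer regularity. The set \(G_N:=\{u<-N\}\cap B_j'\) is an open neighbourhood of \(E\cap B_j'\), and Lemma~\ref{lem:BTcap_globalandlocal} with the Chern--Levine--Nirenberg inequality gives \(Cap_{\omega}(G_N)\leq C_j\,Cap(G_N,B_j'')=O(1/N)\).
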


Characterization of pluripolar sets using their extremal functions is possible. 
\begin{prop}\label{prop:pluripolarextremal}
Let \(E\) be a subset of \(X\). 
\begin{itemize}
\item[(i)] E is pluripolar \(\Leftrightarrow\) \(\sup_X V_E^*=\infty\) \(\Leftrightarrow\) \(V_E^*\equiv\infty\).
\item[(ii)] If E is not pluripolar, then \(V_E^*\in PSH(X,\omega)\) and \(V_E^*|_{int(E)}\equiv0\).
\end{itemize}
\end{prop}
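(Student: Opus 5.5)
We prove the two items separately, and the key tool throughout is Proposition~\ref{lppgpp}, which lets us pass freely between local and global pluripolarity. The one delicate point is where $V_E^*$ is finite, which is handled at the start of (i).

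\textbf{Item (i).} Suppose first that $E$ is pluripolar. By Proposition~\ref{lppgpp} there is $\psi\in PSH(X,\omega)$ with $E\subset\{\psi=-\infty\}$. Since $X$ is compact, $\psi$ is bounded above, and we may normalize so that $\psi\le 0$. Then for each $c>0$ the function $\psi+c$ lies in $PSH(X,\omega)$ and is $\le 0$ on $E$, because $\psi=-\infty$ there. Hence $V_E\ge \psi+c$ for every $c$. At any point where $\psi>-\infty$, which is a dense set since $\psi\not\equiv-\infty$, this gives $V_E=\infty$. Taking the upper semicontinuous regularization, $V_E^*\equiv\infty$.

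The implication $V_E^*\equiv\infty\Rightarrow\sup_X V_E^*=\infty$ is trivial.

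For the remaining implication we argue by contraposition. Assume $E$ is not pluripolar; we show $\sup_X V_E^*<\infty$. Suppose instead that $\sup_X V_E=\infty$. Then there are $v_j\in PSH(X,\omega)$ with $v_j|_E\le 0$ and $M_j:=\sup_X v_j\ge 4^j$. Normalize by setting $w_j:=v_j-M_j$, so $\sup_X w_j=0$. The family $\{w\in PSH(X,\omega):\sup_X w=0\}$ is relatively compact in $L^1(X)$, and this is the standard Hermitian compactness used in \cite{Ah26}. Consequently there is a uniform bound $\int_X |w_j|\,\omega^n\le C$.

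Now set $\psi:=\sum_j 2^{-j}w_j$. Because $\sum_j 2^{-j}=1$ and each $w_j$ is $\omega$-psh, the partial sums are $\omega$-psh, and they decrease. Their limit satisfies $\psi\not\equiv-\infty$, since its $L^1$ norm is bounded by $C$. On $E$ we have $w_j\le -M_j\le -4^j$, so $\psi=-\infty$ on $E$. Thus $E$ is $\omega$-pluripolar, hence pluripolar by Proposition~\ref{lppgpp}, a contradiction.

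It follows that $\sup_X V_E<\infty$. Since $X$ itself is open, the paper's remark gives $\sup_X V_E^*=\sup_X V_E<\infty$, which completes the equivalences in (i).

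\textbf{Item (ii).} Assume $E$ is not pluripolar. By (i), the family defining $V_E$ is uniformly bounded above. Proposition~\ref{prop:envelope}(ii) then gives $V_E^*\in PSH(X,\omega)$; in particular $V_E^*\not\equiv-\infty$, since $V_E\ge 0$ because $0$ is a competitor.

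It remains to show $V_E^*=0$ on the interior of $E$. Let $x\in\mathrm{int}(E)$ and take an open $U\ni x$ with $U\subset E$. Every competitor is $\le 0$ on $E\supset U$, so $V_E\le 0$ on $U$. Since $U$ is open, the upper regularization also satisfies $V_E^*\le 0$ on $U$. Combined with $V_E^*\ge V_E\ge 0$, we get $V_E^*=0$ on $U$.

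The main obstacle is the $L^1$-compactness of normalized $\omega$-psh functions on a Hermitian manifold, which is used in the third step of (i); it is quoted from \cite{Ah26}, \cite{GZ17}. Note also that $\sum_j 2^{-j}w_j$ is $\omega$-psh precisely because the weights sum to $1$, which is exactly what the normalization in (i) arranges.
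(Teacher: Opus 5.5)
Your argument is correct and is the standard one that the paper delegates to \cite{Ah26}. It combines Vu's theorem (Proposition~\ref{lppgpp}) to produce a global $\omega$-psh $\psi$ with $E\subset\{\psi=-\infty\}$, and, conversely, the series $\sum_j 2^{-j}(v_j-\sup_X v_j)$ controlled by the $L^1$-compactness of sup-normalized $\omega$-psh functions, as in the compact K\"ahler setting \cite{GZ17}. Your tacit assumption that $X$ is connected (used for the density of $\{\psi>-\infty\}$ and for the decreasing limit not being $\equiv-\infty$) is also needed for the statement itself.
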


We also have the following properties as in 
\cite[Proposition~9.19]{GZ17}.
\begin{prop}\label{prop:elementary} 
Let \(E,F,P\) be subsets of \(X\). 
\begin{enumerate}
\item[(i)]
$E\subset F$ implies $V_F \leq V_E$.
\item[(ii)]
If $E$ is open, $V_E = V^*_{E}$.
\item[(iii)]
If $P$ is pluripolar, $V_{E\cup P}^* = V_E^*$.
\item[(iv)]
If $E_j\subset X$ is an increasing sequence and $E$ is the limit, 
$\lim_{j\to \infty}V_{E_j}^* = V_E^*.$
\item[(v)]
If compact $K_j\subset X$ is a decreasing sequence and $K$ is the limit,\\
$\lim_{j\to \infty} V_{K_j} = V_K$ and $\lim_{j\to \infty} V_{K_j}^*  = V_K^*$ a.e..
\end{enumerate}
\end{prop}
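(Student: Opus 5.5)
Each item of Proposition~\ref{prop:elementary} reduces to the definitions \eqref{eq:SZ-ext}, \eqref{eq:w-SZ-ext} together with Propositions~\ref{prop:envelope}, \ref{lppgpp} and \ref{prop:pluripolarextremal}, following the pattern of \cite[Proposition~9.19]{GZ17}; I would treat the items in order.

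Item (i) is immediate: if \(E\subset F\), every competitor for \(V_F\) satisfies \(v|_E\le v|_F\le 0\), so it is a competitor for \(V_E\). For (ii), if \(E\) is open and non-pluripolar (an open nonempty set is non-pluripolar), then \(V_E^*\in PSH(X,\omega)\) by Proposition~\ref{prop:pluripolarextremal}(ii), and \(V_E^*|_{E}=V_E^*|_{int(E)}\equiv 0\). Hence \(V_E^*\) is itself a competitor, so \(V_E^*\le V_E\le V_E^*\). If \(E=\emptyset\), both sides are \(+\infty\).

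For (iii), the inequality \(V_{E\cup P}^*\le V_E^*\) follows from (i). For the reverse, if \(E\) is pluripolar then so is \(E\cup P\), and both sides are \(\equiv\infty\) by Proposition~\ref{prop:pluripolarextremal}(i). Otherwise, Proposition~\ref{lppgpp} gives \(\psi\in PSH(X,\omega)\) with \(P\subset\{\psi=-\infty\}\); subtracting a constant, we may take \(\psi\le 0\). For a competitor \(v\) of \(V_E\) and \(\epsilon\in(0,1)\), the function \((1-\epsilon)v+\epsilon\psi\) lies in \(PSH(X,\omega)\) and is \(\le 0\) on \(E\cup P\). Therefore \((1-\epsilon)V_E+\epsilon\psi\le V_{E\cup P}\). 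Letting \(\epsilon\to0\) gives \(V_E\le V_{E\cup P}\) off \(\{\psi=-\infty\}\), a set of Lebesgue measure zero. Two \(\omega\)-psh functions that agree almost everywhere are equal, and the regularizations are determined a.e. (locally, as psh plus smooth), so \(V_E^*\le V_{E\cup P}^*\).

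For (iv), the sequence \(V_{E_j}^*\) decreases by (i) and is bounded below by \(V_E^*\). If \(E\) is pluripolar, so is every \(E_j\), and everything is \(\infty\). Otherwise, for large \(j\) the \(E_j\) are non-pluripolar: a countable union of pluripolar sets is \(\omega\)-pluripolar, by summing \(2^{-k}\)-weighted normalized potentials. So the limit \(u=\lim V_{E_j}^*\) is \(\omega\)-psh, and \(u\ge V_E^*\). Moreover \(u\le V_{E_j}^*=V_{E_j}\le0\) on \(E_j\setminus N_j\), where \(N_j=\{V_{E_j}<V_{E_j}^*\}\) is pluripolar. This is the step I expect to be the main obstacle: it requires the Hermitian analogue of the Bedford--Taylor theorem that negligible sets are pluripolar, which I would obtain locally from the Euclidean result, since \(V_{E_j}\) is locally a sup of psh-plus-fixed-smooth functions. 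Then \(u\le0\) on \(E\setminus\bigcup N_j\), so \(u\le V_{E\setminus\bigcup_j N_j}^*=V_E^*\) by (iii).

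For (v), the sequence \(V_{K_j}\) increases with limit \(\le V_K\). Fix \(v\) with \(v|_K\le0\) and \(\epsilon>0\). Since \(v\) is upper semicontinuous, \(\{v<\epsilon\}\) is an open set containing \(K\), hence contains \(K_j\) for large \(j\) by compactness. Then \(v-\epsilon\le V_{K_j}\), which gives \(\lim_j V_{K_j}=V_K\). For the regularizations, \((\lim_j V_{K_j}^*)\) is an increasing limit of \(\omega\)-psh functions, and it agrees with its usc regularization off a pluripolar set, hence a.e. This yields \(\lim_j V_{K_j}^*=V_K^*\) a.e. The pluripolar case is trivial.
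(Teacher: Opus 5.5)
Your proposal is correct and is essentially the standard argument behind \cite[Proposition~9.19]{GZ17}, to which the paper (together with \cite{Ah26}) defers instead of giving its own proof: a global potential from Proposition~\ref{lppgpp} with convex combinations for (iii), Bedford--Taylor's theorem that negligible sets are pluripolar plus (iii) for (iv), and upper semicontinuity with compactness for (v). Two small points should be made explicit: at the end of (iii) it is cleaner to note that each competitor $v$ of $V_E$ satisfies $v\le V_{E\cup P}^*$ off $\{\psi=-\infty\}$, hence everywhere since both are $\omega$-psh, and in (v) the a.e.\ statement rests on the sandwich $V_K=\lim_j V_{K_j}\le\lim_j V_{K_j}^*\le V_K^*$ together with $V_K=V_K^*$ a.e.
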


The lower semicontinuity of the extremal functions on \(X\) for compact sets holds.  
\begin{lem} \label{lem:lower-semicontinuity} If \(K\) is a compact subset of \(X\), then \(V_K\) is lower semicontinuous. 
\end{lem}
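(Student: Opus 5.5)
We aim to prove that \(V_K\) is lower semicontinuous by writing it as a supremum of continuous functions. Lower semicontinuity is preserved under pointwise suprema, so it suffices to show
\[\notag
V_K=\sup\{v:\ v\in PSH(X,\omega)\cap C(X),\ v|_K<0\}.
\]
The inequality ``\(\geq\)'' is immediate from the definition \eqref{eq:SZ-ext}. For ``\(\leq\)'', fix \(x\in X\), a competitor \(u\in PSH(X,\omega)\) with \(u|_K\leq 0\), and \(\epsilon>0\). We will produce a continuous \(\omega\)-psh function \(v\) with \(v|_K<0\) and \(v(x)\geq u(x)-\epsilon\). If \(u(x)=-\infty\), the constant \(-1\) already works, since constants are \(\omega\)-psh.

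The key tool is regularization of \(\omega\)-psh functions on compact Hermitian manifolds, as in Demailly's approximation, in the form used by Ko\l odziej--Nguyen and Vu: there exist smooth \(u_j\in PSH(X,(1+\epsilon_j)\omega)\) with \(\epsilon_j\searrow0\) and \(u_j\searrow u\) pointwise. Because \(K\) is compact, \(u\leq 0\) on \(K\), and the \(u_j\) are continuous and decrease to \(u\), Dini's argument gives \(\max_K u_j<\epsilon\) for \(j\) large. In detail, the sets \(\{u_j\geq\epsilon\}\cap K\) are compact, decreasing, and have empty intersection.

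Next we rescale back into the class \(\omega\). Set \(v_j:=\frac{1}{1+\epsilon_j}(u_j-2\epsilon)\). Then \(\omega+dd^cv_j=\frac{1}{1+\epsilon_j}\bigl((1+\epsilon_j)\omega+dd^cu_j\bigr)\geq0\), so \(v_j\in PSH(X,\omega)\cap C(X)\), and \(v_j<0\) on \(K\). At the point \(x\) we have \(u_j(x)\geq u(x)\), and \(u\) is bounded above on the compact manifold \(X\). Hence \(v_j(x)\geq \frac{u(x)-2\epsilon}{1+\epsilon_j}\to u(x)-2\epsilon\). Letting \(\epsilon\to0\) and taking the supremum over \(u\) gives ``\(\leq\)''.

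The main obstacle is the approximation step. On a Hermitian rather than Kähler manifold we need a decreasing smooth approximation that loses only \(\epsilon_j\omega\) of positivity, and must cite it correctly. If only local regularization is available, a fallback is to glue local convolutions with a partition of unity (Richberg-type gluing), which gives continuous functions in \(PSH(X,(1+\epsilon)\omega)\) that are close to \(u\) from above on \(K\) by semicontinuity. Alternatively, one could use Proposition~\ref{prop:envelope} in the form \(V_K=\lim V_{K_\delta}\) for compact neighborhoods. We expect the global approximation theorem to suffice.
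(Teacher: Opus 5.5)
Your argument is correct and is the standard route for this fact, the same one used for $L_K$ in $\mathbb{C}^n$ and on compact K\"ahler manifolds: you exhibit $V_K$ as a supremum of continuous $\omega$-psh competitors that are negative on $K$, obtained from a decreasing smooth regularization plus a Dini argument on $K$. The paper gives no proof of its own here and defers these basic properties to \cite{Ah26}.

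The approximation you need is the theorem of B\l ocki--Ko\l odziej \cite{BK07}, which to my recollection applies on a compact Hermitian manifold and even yields smooth $\omega$-psh approximants ($\varepsilon_j=0$). Cite that rather than the Kiselman--Legendre regularization of Lemma~\ref{lem:kis}, which needs $v$ bounded and only produces upper semicontinuous functions.
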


\begin{cor}\label{cor:semicontinuity} If \(K\) is a compact subset of \(X\), then $V_K$ is continuous if and only if $V_K^*=0$ on \(K\).
\end{cor}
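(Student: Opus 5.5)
The plan is to combine the lower semicontinuity of Lemma~\ref{lem:lower-semicontinuity} with the fact that \(V_K^*\) is upper semicontinuous, so that continuity of \(V_K\) amounts to the identity \(V_K=V_K^*\). One preliminary observation is used in both directions. The constant function \(0\) lies in \(PSH(X,\omega)\), since \(\omega\geq0\), and it is \(\leq0\) on \(K\). It is therefore a competitor in \eqref{eq:SZ-ext}, which gives \(V_K\geq0\) on \(X\). Together with \(V_K|_K\leq0\) from the definition, this yields \(V_K=0\) on \(K\).

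\emph{Forward direction.} Suppose \(V_K\) is continuous. Then \(V_K\) is in particular upper semicontinuous, so it coincides with its upper semicontinuous regularization, \(V_K^*=V_K\). By the preliminary observation, \(V_K^*=V_K=0\) on \(K\).

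\emph{Converse direction.} Suppose \(V_K^*=0\) on \(K\), and assume \(K\neq\emptyset\); for \(K=\emptyset\) the statement is degenerate. Then \(\sup_X V_K^*\) is not \(+\infty\) everywhere, so by Proposition~\ref{prop:pluripolarextremal}(i) the set \(K\) is not pluripolar. Proposition~\ref{prop:pluripolarextremal}(ii) then gives \(V_K^*\in PSH(X,\omega)\). Since \(V_K^*\leq0\) on \(K\), the function \(V_K^*\) is itself a competitor in the supremum defining \(V_K\), so \(V_K^*\leq V_K\). The reverse inequality \(V_K\leq V_K^*\) holds always, hence \(V_K=V_K^*\). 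Consequently \(V_K\) is upper semicontinuous, and it is lower semicontinuous by Lemma~\ref{lem:lower-semicontinuity}. Thus \(V_K\) is continuous.

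The only step that needs care is the converse: one must check that \(V_K^*\) is a genuine \(\omega\)-psh competitor, which requires non-pluripolarity. This is exactly what Proposition~\ref{prop:pluripolarextremal} supplies, using the hypothesis that \(V_K^*\) is finite, namely zero, on the nonempty set \(K\).
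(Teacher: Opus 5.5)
Your proof is correct and is the argument the paper intends: it states the corollary right after the lower semicontinuity Lemma~\ref{lem:lower-semicontinuity} and refers to \cite{Ah26} for details, and you combine that lemma with Proposition~\ref{prop:pluripolarextremal}, which makes \(V_K^*\) an admissible competitor, so that \(V_K=V_K^*\) is also upper semicontinuous. One wording slip: what you need from Proposition~\ref{prop:pluripolarextremal}(i) is \(V_K^*\not\equiv+\infty\), which follows from \(V_K^*=0\) on \(K\neq\emptyset\), not that ``\(\sup_X V_K^*\) is not \(+\infty\) everywhere''.
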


\begin{remark}\label{rmk:regularity-c} Corollary~~\ref{cor:semicontinuity} implies that the 
continuity of $V_K$ for compact \(K\) does not depend on the Hermitian metric $\omega$ since for another hermitian metric $\omega'$, $\omega' \leq c \omega$ for some constant \(c>0\), $V_{\omega';K}^* \leq c V_{\omega;K}^*=cV_{K}$. Later, Lemma~\ref{lem:property} shows that the H\"older continuity of \(V_K\) and the H\"older exponent are also independent of \(\omega\). 
\end{remark}

The (zero-one) relative extremal function for a set $E$ in $X$ is defined by
\[\label{eq:r-ext}
	h_E (z) := \sup\left\{ v(z): v\in PSH(X,\omega),\; v\leq 1, 
    \;v|_E\leq 0\right\}.
\]
By Proposition~\ref{prop:envelope}, 
$h_E^* \in PSH(X,\omega)$. 
Lemma~\ref{lem:BTcap_globalandlocal} proves that \(E\) is pluripolar if and only if \(h_E^*\equiv 1\) on the compact Hermitian manifold \(X\).
For a non-pluripolar compact set $K\subset X$, denote $M_K := \sup_{X}V_K^*=\sup_{X}V_K<\infty$. Then we have 
\[\label{eq:zero-one-function}
	V_K^* \leq M_K h_K^*.\]


The global Bedford-Taylor capacity is comparable (bi-Lipschitz equivalent) to the local Bedford-Taylor capacity. 
The proof is similar to the compact K\"ahler case as in \cite[page 52-53]{Ko05}, \cite[Proposition~9.8]{GZ17}. This comparability, Proposition~\ref{lppgpp}, \cite[Corollary~4.36]{GZ17} and \cite[Theorem~4.40]{GZ17} together characterize Borel pluripolar sets and pluripolar sets via the global Bedford-Taylor capacity and the outer global Bedford-Taylor capacity respectively. 
Characterization of a pluripolar set using its relative extremal function is also possible. 
The outer global Bedford-Taylor capacity of a subset \(E\) in \(X\) is defined by  
$$Cap_{\omega}^*(E):=\inf\{Cap_{\omega}(O):O\;\text{open, }E\subset O\subset X\}.$$

\begin{lem}\label{lem:BTcap_globalandlocal} Let \(\rho_j\), \(1\leq j\leq N<\infty\), be finitely many smooth strictly psh functions 
on some holomorphic charts of \(X\) properly containing the closures of \(U_j=\{\rho_j<0\}\neq\emptyset\) respectively. Assume for some constant \(\delta>0\), \(U_{j,\delta}=\{\rho_j<-\delta\}\) cover \(X\).
Then there exist some constants \(C_j=C_j(\rho_j,X,\omega)>1\),  \(C_j'=C_j'(\rho_j,X,\omega,\delta)>1\) such that 
for each \(1\leq j_0\leq N\) and Borel subset \(E\subset X\),
\[\begin{aligned} 
\label{eq:omcap-under-relcap}
Cap_{\omega}(E\cap U_{j_0,\delta})&\leq C_{j_0} Cap(E\cap U_{j_0,\delta},U_{j_0}),\\
Cap_{\omega}(E)
\leq\sum_{j=1}^NCap_{\omega}({E\cap U_{j,\delta}})
&\leq(\max_{1\leq j\leq N}C_j)\sum_{j=1}^NCap({E\cap U_{j,\delta}},U_j),\end{aligned}\]
\[\label{eq:relcap-under-omcap}
\begin{aligned}
Cap(E\cap U_{j_0,\delta},U_{j_0})
&\leq C_{j_0}' Cap_{\omega}(E\cap U_{j_0,\delta})
\leq C_{j_0}' Cap_{\omega}(E),\\
\sum_{j=1}^N Cap(E\cap U_{j,\delta},U_j)
&\leq (\sum_{j=1}^N C_j') Cap_{\omega}(E).
\end{aligned}\]
Consequently, a Borel subset \(P_1\subset X\) is pluripolar if and only if \(Cap_{\omega}(P_1)=0\), and a subset \(P_2\subset X\) is pluripolar if and only if \(Cap_{\omega}^*(P_2)=0\) if and only if \(h_{P_2}^*\equiv 1\).
\end{lem}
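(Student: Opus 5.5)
The plan is to compare the two capacities chart by chart, in the spirit of \cite[Proposition~9.8]{GZ17}, and then read off the pluripolarity statements.

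\emph{Upper bound \eqref{eq:omcap-under-relcap}.} Fix $j_0$ and a Borel set $E$. Since $\omega$ is smooth and $\rho_{j_0}$ is smooth and strictly psh on a neighbourhood of $\overline{U_{j_0}}$, there is $A=A(\rho_{j_0},X,\omega)\geq1$ with $\omega\leq A\,dd^c\rho_{j_0}$ on $\overline{U_{j_0}}$. Take $v\in PSH(X,\omega)$ with $0\leq v\leq1$. On $U_{j_0}$ the function
\[
u:=\frac{v+A\rho_{j_0}+A\|\rho_{j_0}\|_\infty}{1+2A\|\rho_{j_0}\|_\infty}
\]
is psh and takes values in $[0,1]$. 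Moreover $\omega+dd^cv\leq (1+2A\|\rho_{j_0}\|_\infty)\,dd^cu$ as positive $(1,1)$-forms. It follows that
\[
\int_{E\cap U_{j_0,\delta}}(\omega+dd^cv)^n\leq (1+2A\|\rho_{j_0}\|_\infty)^n\, Cap(E\cap U_{j_0,\delta},U_{j_0}).
\]
Taking the supremum over $v$ gives the first line. The second line follows from countable subadditivity of $Cap_\omega$, which holds because each $(\omega+dd^cv)^n$ is a measure and the sets $U_{j,\delta}$ cover $X$.

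\emph{Lower bound \eqref{eq:relcap-under-omcap}.} Take $u\in PSH(U_{j_0})$ with $0\leq u\leq1$. The idea is to glue $u$ into a global $\omega$-psh function using Proposition~\ref{prop:envelope}(i). Choose $\epsilon>0$ small, depending on $\omega$ and $\rho_{j_0}$, so that $\epsilon\, dd^c\rho_{j_0}\leq\omega/2$ on $\overline{U_{j_0}}$. Set
\[
w:=\max\Bigl\{\epsilon u+ \frac{\epsilon}{\delta}\rho_{j_0},\ -\epsilon\Bigr\}\ \text{on } U_{j_0},\qquad w:=-\epsilon\ \text{elsewhere}.
\]
Near $\partial U_{j_0}$ the first term is at most $\epsilon+\epsilon\rho_{j_0}/\delta$, which tends to $\epsilon$, so the two terms do not glue directly. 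I will therefore modify the construction: replace $\rho_{j_0}/\delta$ by $2\rho_{j_0}/\delta$, and use the fact that $\rho_{j_0}$ is continuous with $\rho_{j_0}=0$ on $\partial U_{j_0}$. On the region $\{\rho_{j_0}>-\delta/2\}$ we have $\epsilon u+2\epsilon\rho_{j_0}/\delta<\epsilon-\epsilon=0$. The more delicate point is whether this stays below $-\epsilon$ near the boundary; it does not with these constants, so instead I take $\max\{\epsilon u+3\epsilon\rho_{j_0}/\delta,\,-\epsilon\}$. Near $\partial U_{j_0}$ the first term is at most $\epsilon(1+3\rho_{j_0}/\delta)$, which is below $-\epsilon$ once $\rho_{j_0}<-2\delta/3$. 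Because that region does not reach the boundary, this is still not good enough, so I will use $\max\{\epsilon u+M\epsilon\rho_{j_0},-\epsilon\}$ on the smaller domain $\{\rho_{j_0}<-\delta/2\}$. Here $M=4/\delta$, so the first term is $<-\epsilon$ near $\{\rho_{j_0}=-\delta/2\}$, and $\epsilon$ is now chosen so that $\epsilon M\,dd^c\rho_{j_0}\leq\omega/2$.

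Proposition~\ref{prop:envelope}(i) then gives $w\in PSH(X,\omega)$ with $-\epsilon\leq w\leq\epsilon$. Hence $v:=(w+\epsilon)/(2\epsilon)$ lies in $PSH(X,\omega/(2\epsilon))$ and takes values in $[0,1]$. On $U_{j_0,\delta}$ the first term dominates: there $\rho_{j_0}\geq-\|\rho_{j_0}\|_\infty$, but this needs $\epsilon u+M\epsilon\rho_{j_0}\geq-\epsilon$. That fails in general, so I shift by a constant and instead use $\max\{\epsilon u+M\epsilon(\rho_{j_0}+c),\,-\epsilon\}$ with $c$ chosen so that the first term exceeds $-\epsilon$ on $U_{j_0,\delta}$ while remaining below $-\epsilon$ near $\{\rho_{j_0}=-\delta/2\}$. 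Since $\rho_{j_0}$ is bounded, this is possible by applying $\max$ with a convex increasing function of $\rho_{j_0}$ that separates the levels $-\delta$ and $-\delta/2$; this construction of the glue function is the main technical step.

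With such a $v$, on $U_{j_0,\delta}$ we have $(dd^cu)^n\leq C\,(\omega'+dd^cv)^n$, where $\omega'=\omega/(2\epsilon)$. The capacity $Cap_{\omega'}$ is comparable to $Cap_\omega$ with constants depending on $\epsilon$, because $v\mapsto 2\epsilon v$ and multilinear expansion let one compare $\omega'$-psh and $\omega$-psh functions. This yields the constant $C'_{j_0}$. Summing over $j$ gives the second line of \eqref{eq:relcap-under-omcap}.

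\emph{Consequences.} For a Borel set $P_1$, $Cap_\omega(P_1)=0$ holds if and only if every local relative capacity $Cap(P_1\cap U_{j,\delta},U_j)$ vanishes. By \cite[Corollary~4.36]{GZ17}, this is equivalent to local pluripolarity, hence, by Proposition~\ref{lppgpp}, to pluripolarity. For an arbitrary $P_2$, apply the same comparison to open neighbourhoods and use \cite[Theorem~4.40]{GZ17}, so that $Cap^*_\omega(P_2)=0$ is equivalent to $P_2$ being pluripolar.

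For the relative extremal function: if $P_2\subset\{v=-\infty\}$ with $v\in PSH(X,\omega)$ and $v\leq0$, then $1+\epsilon v$ is an admissible competitor in the definition of $h_{P_2}$ for every $\epsilon\in(0,1]$. This forces $h_{P_2}=1$ off $\{v=-\infty\}$, so $h_{P_2}^*\equiv1$. Conversely, suppose $h_{P_2}^*\equiv1$. I will use the standard Choquet-type identity $Cap_\omega^*(E)\approx\int_X(\omega+dd^c h_E^*)^n$ restricted to its contact set, or more simply argue by contradiction. If $P_2$ were not pluripolar, then some chart piece $P_2\cap U_{j,\delta}$ would be non-pluripolar, and its local relative extremal function would satisfy $u^*\not\equiv0$ while vanishing on a non-pluripolar part. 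Via the gluing above, this produces a global competitor $v$ with $v^*<1$ somewhere, contradicting $h_{P_2}^*\equiv1$.
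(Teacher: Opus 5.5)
Your upper bound \eqref{eq:omcap-under-relcap} and the subadditivity step are fine. The lower bound \eqref{eq:relcap-under-omcap}, however, is the real content of the lemma, and it has a genuine gap. The glue function you end with is $\max\{\epsilon u+\chi(\rho_{j_0}),-\epsilon\}$ on $D=\{\rho_{j_0}<-\delta/2\}$, with $\chi$ increasing (your shifted choice $\chi(t)=M\epsilon(t+c)$ included), and it cannot work. Its first term is psh on $D$. So the requirement $\limsup\le-\epsilon$ at $\partial D$ from Proposition~\ref{prop:envelope}(i) forces it to be $\le-\epsilon$ on all of $D$, by the maximum principle. Then $w\equiv-\epsilon$ and nothing about $(dd^cu)^n$ survives. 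Concretely, domination on $U_{j_0,\delta}$ (take $u\equiv0$) and gluing at $\{\rho_{j_0}=-\delta/2\}$ (take $u\equiv1$) would need $\chi(-\delta)\ge-\epsilon>-2\epsilon\ge\chi(-\delta/2)$, which contradicts monotonicity.

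The missing idea is that the local piece should be merely $\omega$-psh: the strict positivity of $\omega$ lets you add a \emph{decreasing} function of $\rho_{j_0}$. Choose $c>0$ with $c\,dd^c\rho_{j_0}\le\omega$ near $\overline{U_{j_0}}$ and $c\|\rho_{j_0}\|_\infty\le1$, and put $\epsilon:=c\delta/2$. Set $w:=\max\{\epsilon u-c\rho_{j_0}-c\delta/2,\,0\}$ on $U_{j_0}$ and $w:=0$ on $X\setminus U_{j_0}$. The first term has $\limsup\le\epsilon-c\delta/2=0$ at $\partial U_{j_0}$, is $>c\delta/2$ on $U_{j_0,\delta}$, and is $\le c\|\rho_{j_0}\|_\infty\le1$. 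Hence $w\in PSH(X,\omega)$, $0\le w\le1$, and $\omega+dd^cw\ge\epsilon\,dd^cu$ on $U_{j_0,\delta}$. This gives \eqref{eq:relcap-under-omcap} with $C'_{j_0}=\epsilon^{-n}$, and no rescaling of $\omega$ is needed. (The paper itself does not write this out; it defers to the K\"ahler arguments of \cite[pp.~52--53]{Ko05} and \cite[Proposition~9.8]{GZ17}.)

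The converse $h_{P_2}^*\equiv1\Rightarrow P_2$ pluripolar is also not established. Producing one competitor $v$ with $v^*<1$ somewhere cannot contradict $h_{P_2}^*\equiv1$: $h_{P_2}$ is a supremum, so you need an upper bound valid for \emph{all} competitors. A direct route is as follows. Every competitor for $h_{P_2}$ is a competitor for $V_{P_2}$, so $h_{P_2}^*\le V_{P_2}^*$. If $P_2$ is not pluripolar, $V_{P_2}^*$ is bounded by Proposition~\ref{prop:pluripolarextremal}. Moreover $\{V_{P_2}<V_{P_2}^*\}$ is pluripolar: on a chart with $dd^cg\ge\omega$ it is the negligible set of the locally bounded-above family $\{v+g\}$ of psh functions, so the Bedford--Taylor theorem applies. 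Picking $x\in P_2$ outside this set gives $h_{P_2}^*(x)\le V_{P_2}^*(x)=V_{P_2}(x)=0<1$. The rest of your consequences section is fine once \eqref{eq:relcap-under-omcap} is available: the direction via $1+\epsilon v$, and the capacity characterizations via \cite[Corollary~4.36]{GZ17}, \cite[Theorem~4.40]{GZ17} and Proposition~\ref{lppgpp}.
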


For a non-pluripolar compact subset \(K\subset X\), Vu \cite[Proposition~2.9]{Vu19} obtained the following estimate of \(M_K\) in \eqref{eq:zero-one-function}
using the global Bedford-Taylor capacity. 
\begin{lem}\label{lem:BTcapATcap}
There exists a uniform positive constant \(A=A(X,\omega)\) depending only on \((X,\omega)\) such that, for every non-pluripolar compact subset \(K\) in \(X\), 
\[\label{eq:cap-comparison}0<\frac{(\int_X(dd^cV_K^*+\omega)^n)^{\frac{1}{n}}}{[Cap_{\omega}(K)]^{\frac{1}{n}}}\leq\max\{1,\sup_X V_K\},\quad \sup_X V_K\leq\frac{A}{Cap_{\omega}(K)}.\]
\end{lem}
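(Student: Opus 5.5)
The estimate follows Vu's argument, which adapts the Alexander–Taylor comparison between capacity and the supremum of the extremal function to the Hermitian setting. Fix a non-pluripolar compact \(K\subset X\). By Proposition~\ref{prop:pluripolarextremal}, \(V_K^*\in PSH(X,\omega)\) and \(M_K=\sup_X V_K<\infty\). Since \(V_K\) is a supremum of \(\omega\)-psh functions that are \(\le 0\) on \(K\), we have \(V_K^*\le 0\) on \(K\) outside a pluripolar set. Hence \(V_K^*=0\) quasi-everywhere on \(K\), because \(0\in PSH(X,\omega)\) forces \(V_K\ge 0\).

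\textbf{First inequality.} Positivity of the left side comes from Guedj–Lu \cite[Proposition~3.4]{GL22}: the total mass \(\int_X(\omega+dd^cV_K^*)^n\) is strictly positive for bounded \(\omega\)-psh functions. Set \(M=\max\{1,M_K\}\) and \(u:=V_K^*/M\). Then \(0\le u\le 1\), and \(u\in PSH(X,\omega)\) because \(M\ge 1\) and \(\omega/M\le\omega\). Next we use that the Monge–Ampère measure \((\omega+dd^cV_K^*)^n\) is supported on \(K\). This follows by the standard balayage argument: on a small coordinate ball \(B\subset X\setminus K\), replace \(V_K^*\) by the solution of the local Dirichlet problem for \((dd^c(\rho+\cdot))^n=0\), where \(\rho\) is a local potential with \(dd^c\rho\ge\omega\); alternatively use Vu's local version. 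The glued function stays admissible by Proposition~\ref{prop:envelope}(i), so maximality forces \((\omega+dd^cV_K^*)^n=0\) off \(K\). Then
\[
\int_X(\omega+dd^cV_K^*)^n=\int_K(\omega+dd^cV_K^*)^n\le M^n\int_K\Big(\omega+dd^c\frac{V_K^*}{M}\Big)^n\le M^n Cap_\omega(K),
\]
where the middle step expands \(\omega+dd^cV_K^*=M(\omega/M+dd^cu)\le M(\omega+dd^cu)\) as positive forms. Taking \(n\)-th roots gives the first inequality.

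\textbf{Second inequality.} The constant \(A\) must be independent of \(K\), so I would avoid the total mass and instead use capacity and the relative extremal function. If \(M_K\le 1\), it suffices to bound \(Cap_\omega(K)\le Cap_\omega(X)\), which is finite by \cite[Proposition~2.3]{DK12}; take \(A\ge Cap_\omega(X)\). If \(M_K>1\), I would use a Hermitian analogue of the Alexander–Taylor / Ko\l odziej argument via the cover of Lemma~\ref{lem:BTcap_globalandlocal}. By that lemma, \(Cap_\omega(K)\le C\sum_j Cap(K\cap U_{j,\delta},U_j)\), so some \(j\) has \(Cap(K_j,U_j)\gtrsim Cap_\omega(K)\), where \(K_j:=K\cap U_{j,\delta}\). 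Since \(V_K\le V_{K_j}\), it suffices to bound \(\sup_X V_{K_j}\). On the chart, write \(\omega\le dd^c\rho\) locally and compare \(V_{K_j}\) with the local relative extremal function \(u_{K_j,U_j}\). The classical local estimate (Alexander–Taylor, \cite[Ch.~4]{GZ17}) gives the following: if \(v\) is psh on \(U_j\), \(v\le 0\) on \(K_j\), and \(v\le S\) on \(U_j\), then \(v\le S(1+u^*_{K_j,U_j})\). Integrating \((dd^cu^*_{K_j,U_j})^n\), whose mass equals \(Cap(K_j,U_j)\), against a fixed probability-type measure yields \(\inf_{U_{j,\delta}}u^*_{K_j,U_j}\le -1+c/Cap(K_j,U_j)\). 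We then propagate across the finitely many overlapping charts to the whole of \(X\), using compactness and a Harnack / sub-mean-value inequality for \(\omega\)-psh functions with uniform constants. This converts the bound into \(M_K\le A'/Cap(K_j,U_j)\le A/Cap_\omega(K)\).

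The main obstacle is this second bound. Without a uniform non-collapsing property one cannot simply use \(\int_X(\omega+dd^cV_K^*)^n\ge c>0\) and the first inequality. That would give \(\max\{1,M_K\}\ge (c/Cap_\omega(K))^{1/n}\), which is the wrong direction anyway. So the argument must go through local capacities and a chain of charts whose constants depend only on \((X,\omega)\); this is exactly what Vu's proof \cite[Proposition~2.9]{Vu19} does, and I would follow it closely.
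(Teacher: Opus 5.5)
The paper gives no proof of this lemma; it quotes \cite[Proposition~2.9]{Vu19}. Your algebra for the first inequality is fine: rescaling by \(M=\max\{1,\sup_X V_K\}\), monotonicity of the Monge--Amp\`ere operator, and the definition of \(Cap_\omega\). The reduction of the second inequality to one chart via Lemma~\ref{lem:BTcap_globalandlocal} is also fine.

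The step meant to bound \(S:=\sup_X V_{K_j}\), however, has a genuine gap. The estimate \(\inf_{U_{j,\delta}}u^*_{K_j,U_j}\le -1+c/Cap(K_j,U_j)\) is vacuous. Indeed \(u^*_{K_j,U_j}=-1\) quasi-everywhere on \(K_j\subset U_{j,\delta}\), so the infimum is \(-1\) whatever the capacity, and nothing about \(S\) follows. There is also no pointwise Harnack inequality for \(\omega\)-psh functions with which to ``propagate'' between charts, since they may equal \(-\infty\) at points.

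What is missing is an integral estimate on both sides:
\begin{itemize}
\item Globally, \(\int_X(-\psi)\,\omega^n\le C_0(X,\omega)\) for all \(\psi\in PSH(X,\omega)\) with \(\sup_X\psi=0\).
\item Locally, your comparison (with \(\rho\ge0\), \(dd^c\rho\ge\omega\)) gives \(\psi:=V_{K_j}^*-S\le S\,u^*_{K_j,U_j}+\sup\rho\) on \(U_j\).
\item A B\l ocki-type integration by parts against the defining function \(\rho_j\) gives \(\int_{U_j}(-u^*_{K_j,U_j})\,dV\ge c\,\delta^n\,Cap(K_j,U_j)\). This holds because \((dd^cu^*_{K_j,U_j})^n\) is carried by \(K_j\subset\{\rho_j\le-\delta\}\); take \(K_j:=K\cap\{\rho_j\le-\delta\}\) so that it is compact.
\end{itemize}
Together these give \(S\le C/Cap(K_j,U_j)\), which completes your localized route.

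A shorter route is the global sublevel estimate \(Cap_\omega(\{\psi<-t\})\le C/t\) for \(\sup_X\psi=0\). It follows from the same \(L^1\) bound, the local Chern--Levine--Nirenberg-type estimate in \(\mathbb{C}^n\), and Lemma~\ref{lem:BTcap_globalandlocal}. Apply it to \(\psi=V_K^*-\sup_XV_K\), which equals \(-\sup_XV_K\) quasi-everywhere on \(K\).

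Second, the first inequality needs \((\omega+dd^cV_K^*)^n\) to be carried by \(K\), and your balayage does not prove this when \(\omega\) is not closed. Suppose \(dd^c\rho\ge\omega\) and \(\rho+w\) is the maximal replacement on a ball. Then \(\omega+dd^cw=(\omega-dd^c\rho)+dd^c(\rho+w)\) is a nonpositive form plus a degenerate one. So \(w\) need not be \(\omega\)-psh, and the glued function is not a competitor. Choosing \(dd^c\rho\le\omega\) instead does not help: \(\rho+V_K^*\) then need not be psh. Even if it were maximal, \((dd^c(\rho+V_K^*))^n=0\) would not force \((\omega+dd^cV_K^*)^n=0\).

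The argument works in the K\"ahler case only because there \(\omega=dd^c\rho\) locally. Here you must solve the degenerate Dirichlet problem \((\omega+dd^cw)^n=0\) on small balls for \(\omega\) itself. One way is \cite{KN23} for continuous data, followed by approximation of the bounded usc boundary values; alternatively, quote a Hermitian balayage lemma.
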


For subsets \(E_1\subset E_2\) in \(X\) and real-valued functions \(\phi_1\leq \phi_2\) on \(X\), there is also monotonicity for weighted extremal functions as
\[ \label{eq:monotonicity}
	V_{E_2, \phi_2} \leq V_{E_1,\phi_1} \leq V_{E_1, \phi_2}.
\]
The following lemma tells other basic properties of weighted extremal functions. 
\begin{lem}\label{lem:weight-vs-unweight} 
For a compact subset \(K\) in \(X\) and a bounded function \(\phi:X\to\mathbb{R}\),
\begin{itemize}
\item[(i)] $V_K^* + \inf_K \phi \leq V_{K,\phi}^* \leq V_K^* + \sup_K \phi.$
\item[(ii)] for the current $\theta := \omega + dd^c \phi$
of bidegree \((1,1)\) and
the function $V_{\theta;K} := \sup\{v \in PSH(X, \theta): v|_K \leq 0\}$ where \(PSH(X,\theta):=\{v:X\to\mathbb{R}\cup\{-\infty\}:
v\text{ is quasi-psh},\,\theta+dd^cv\geq0\}\), $V_{K, \phi}=V_{\omega;K,\phi} = V_{\theta;K} + \phi.$
\item[(iii)]  Assume further that \(\phi\) is lower semicontinuous. Then $V_{K,\phi}$ is lower semicontinuous. Consequently, for such a function \(\phi\), $V_{K,\phi}$ is continuous 
if and only if $V_{K,\phi}^*\leq\phi$ on \(K\). 
\end{itemize}
\end{lem}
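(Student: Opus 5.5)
We prove the three items of Lemma~\ref{lem:weight-vs-unweight} separately. Each follows from the definitions \eqref{eq:SZ-ext}, \eqref{eq:w-SZ-ext} and from Lemma~\ref{lem:lower-semicontinuity}.

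\emph{Item (i).} Put $m=\inf_K\phi$ and $M=\sup_K\phi$. These are finite because $\phi$ is bounded. Let $v\in PSH(X,\omega)$ with $v|_K\le 0$. Then $v+m\in PSH(X,\omega)$ and $v+m\le\phi$ on $K$, so $v+m\le V_{K,\phi}$. Taking the supremum over $v$ gives $V_K+m\le V_{K,\phi}$. Conversely, if $v|_K\le\phi$, then $(v-M)|_K\le 0$, hence $v\le V_K+M$ and so $V_{K,\phi}\le V_K+M$. Upper semicontinuous regularization commutes with adding constants and preserves inequalities, which yields (i).

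\emph{Item (ii).} This is a change of variables. Since $\phi$ is only bounded, $dd^c\phi$ is understood in the sense of currents, and $PSH(X,\theta)$ is read through $\omega+dd^c(\phi+v)\ge0$. The map $v\mapsto v-\phi$ is a bijection from $\{v\in PSH(X,\omega): v|_K\le\phi\}$ onto $\{w: \theta+dd^cw\ge 0,\ w|_K\le 0\}$, with inverse $w\mapsto w+\phi$, because $\theta+dd^c(v-\phi)=\omega+dd^cv$. Taking suprema pointwise gives $V_{K,\phi}=V_{\theta;K}+\phi$. The one subtle point is quasi-plurisubharmonicity of $v-\phi$ when $\phi$ is not quasi-psh; here we simply adopt the convention that membership in $PSH(X,\theta)$ means $w+\phi\in PSH(X,\omega)$. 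With that convention the identity is purely formal.

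\emph{Item (iii).} This is the step that needs an actual argument. We would adapt the proof of Lemma~\ref{lem:lower-semicontinuity}, which I expect to go through a Choquet-type argument:
\begin{itemize}
\item Fix $x_0\in X$ and $t<V_{K,\phi}(x_0)$, and pick $v\in PSH(X,\omega)$ with $v|_K\le\phi$ and $v(x_0)>t$.
\item Use Demailly regularization on compact Hermitian manifolds to produce continuous (even smooth) $v_j\in PSH(X,(1+\epsilon_j)\omega)$ with $v_j\downarrow v$ and $\epsilon_j\to0$.
\item Since $\phi$ is lower semicontinuous and $K$ is compact, Dini-type reasoning on the upper semicontinuous function $v_j-\phi$, which decreases to $v-\phi\le0$ on $K$, gives $\max_K(v_j-\phi)\le\delta$ for $j$ large. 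The maximum is attained because $v_j-\phi$ is upper semicontinuous on compact $K$.
\item Then $(1+\epsilon_j)^{-1}(v_j-\delta)$ lies in $PSH(X,\omega)$, is continuous, and is at most $\phi+O(\epsilon_j+\delta)$ on $K$. Subtracting a small constant makes it an admissible competitor for $V_{K,\phi}$.
\end{itemize}
So $V_{K,\phi}$ is a supremum of continuous functions, up to arbitrarily small errors at $x_0$, and is therefore lower semicontinuous.

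For the consequence, suppose first that $V_{K,\phi}$ is continuous. Then $V_{K,\phi}^*=V_{K,\phi}\le\phi$ on $K$, the last inequality holding by taking the supremum of competitors at points of $K$. Conversely, suppose $V_{K,\phi}^*\le\phi$ on $K$. Then $V_{K,\phi}^*$ is itself a competitor, so $V_{K,\phi}^*\le V_{K,\phi}$, and hence $V_{K,\phi}=V_{K,\phi}^*$ is upper semicontinuous. Combined with lower semicontinuity, it is continuous. Here $V_{K,\phi}^*$ lies in $PSH(X,\omega)$ by Proposition~\ref{prop:envelope}(ii) whenever it is not identically $+\infty$. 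In the pluripolar case $V_{K,\phi}^*\equiv\infty$ by item (i) and Proposition~\ref{prop:pluripolarextremal}, so the hypothesis $V_{K,\phi}^*\le\phi$ fails and there is nothing to prove.

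The main obstacle is the regularization step with a loss $\epsilon_j\omega$ on a non-Kähler $X$. I would rely on the approximation already used in \cite{Ah26} for Lemma~\ref{lem:lower-semicontinuity}.
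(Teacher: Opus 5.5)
Your proof is correct and follows the standard argument that the paper relies on without proving it here (it defers these basic properties to \cite{Ah26}): (i) and (ii) come directly from the definitions, and (iii) is smooth decreasing regularization followed by a Dini argument for the upper semicontinuous functions $v_j-\phi$ on $K$, where the regularization you flag as the main obstacle is the known result for compact Hermitian manifolds in \cite{BK07} (listed in the paper's bibliography), which I recall as removing the $\epsilon_j\omega$ loss, though your loss-and-rescale version suffices either way. Your convention in (ii) is genuinely needed, not cosmetic: with the literal definition of $PSH(X,\theta)$ and a merely bounded $\phi$ the identity can fail (for a non-pluripolar Lebesgue-null $K$ and $\phi=-\mathbf{1}_K$ one gets $\theta=\omega$, hence $V_{\theta;K}+\phi=V_K\neq V_K-1=V_{K,\phi}$ off $K$), whereas the paper only applies (ii) to smooth $\phi$, where your bijection $v\mapsto v-\phi$ holds without any convention.
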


\section{Local H\"older continuity property and uniform density in capacity}\label{sec:hcpandudc}

\subsection{Moduli of continuity of extremal functions on complex coordinate spaces}
When \(E\subset\mathbb{C}^n\) is non-pluripolar, the 
modulus of continuity of \(L_E\) at \(a\in E\) is defined by 
$$\varpi_E(a, \delta) := \sup_{\|z-a\| \leq \delta} L_E(z),\quad
\delta\in[0,\infty).$$
The modulus of continuity of \(L_E\) on \(E\) is defined by 
$$\varpi_E(\delta) := \sup\{ \varpi_E(a,\delta): a\in E\},\quad
\delta\in[0,\infty).$$
$L_E$ is continuous at $a$ if and only if  
$\lim_{\delta\to 0^+} \varpi_E(a,\delta) =0$. Moreover, 
\[\label{eq:local-modulus-of-continuity}
|L_E(z) - L_E(w)| \leq \varpi_E(\|z-w\|), \quad z,w\in \mathbb{C}^n 
.\]
This can be proved as follows. For each \(z'\in\mathbb{C}^n\) and competitor \(u\) for \(L_E\), \(u(\cdot+z')-\varpi_E(\|z'\|)\leq L_E(\cdot)\) by definition of \(L_E\), thus \(L_E(z+z')-\varpi_E(\|z'\|)\leq L_E(z)\).

\subsection{Local H\"older continuity property}
We introduce the local H\"older continuity property and the weak local H\"older continuity property, which are related to the H\"older continuity of extremal functions on \(X\). The term H\"older continuity property is abbreviated as HCP.  
\begin{defn}\label{defn:local-mu-hcp} Let $q>0,\mu\in(0,1]$ be constants. Let \(F\) be a compact subset of \(\mathbb{C}^n\). $F$ is said to have 
\begin{itemize}
\item[(i)] $\mu$-HCP {at $a\in F$} if there exists a constant \(C>0\) satisfying
$$L_F(z) \leq C \delta^\mu,\quad0<\delta\leq1,\,\|z-a\|\leq\delta.$$
\item[(ii)] {\(\mu\)-HCP} if there exists a constant \(C>0\) satisfying 
$$L_F (z) \leq C \delta^\mu,\quad 0<\delta\leq1,\,z\in\bar{\mathbb{B}}(F,\delta).$$
\item[(iii)] local $\mu$-HCP of order $q$ at $a\in F$ if there exists a constant $C>0$ satisfying 
$$\varpi_{F\cap \bar{\mathbb{B}}(a,r)} (a,\delta) \leq \frac{C  \delta^\mu}{r^q}, \quad 0<\delta\leq 1,\, 0< r \leq 1.$$
\item[(iv)] local $\mu$-HCP of order $q$ if there exists a constant $C>0$ satisfying
$$\varpi_{F\cap \bar{\mathbb{B}}(a,r)} (a,\delta) \leq \frac{C  \delta^\mu}{r^q}, \quad a\in F,\,0<\delta\leq 1,\,0< r \leq 1.$$
\end{itemize}

Let \(K\) be a compact subset of \(X\). \(K\) is said to have
\begin{itemize}
\item[(v)] local $\mu$-HCP of order $q$ at $b\in K$ if for 
{each} holomorphic coordinate ball \((\Omega,\tau)\) centered at \(b\) of 
radius in \((0,\infty]\), there exists a constant \(C>0\) 
satisfying $$\varpi_{\tau(K\cap \Omega)\cap \bar{\mathbb{B}}(\mathbf{0},r)} (\mathbf{0},\delta) \leq \frac{C \delta^\mu}{r^q}, \quad 0<\delta\leq 1,\, 0< r \leq 1.
$$
\item[(vi)] local \(\mu\)-HCP of order \(q\) if for 
{each} finite open cover \(\{(O_s,g_s)\}_{s\in J}\) of \(X\) of holomorphic charts and 
{each} regular refined open cover \(\{(\hat{O}_s,g_s)\}\) of \(X\) (\(\hat{O}_s\) is relatively compact in \(O_s\) 
for each \(s\in J\)), there exists a constant \(C>0\) such that, for each \((b,s)\in K\times J\) of \(b\in K\cap \hat{O}_s\),
$$\varpi_{g_s(K\cap O_s)\cap \bar{\mathbb{B}}(g_s(b),r)}(g_s(b),\delta)
\leq \frac{C\delta^{\mu}}{r^q},\quad 0<\delta\leq1,\,0<r\leq 1.$$
Equivalently, \(K\) has local $\mu$-HCP of order $q$ if
for 
{each} finite open cover \(\{(\Omega_{j},\tau_j)\}_{j\in\Lambda}\) of \(X\) 
of holomorphic coordinate balls \(\tau_j(\Omega_j)=\mathbb{B}(\mathbf{0},5)\subset\mathbb{C}^n\) with the refined open cover \(\{(\tau_j^{-1}(\mathbb{B}(\mathbf{0},1)),\tau_j)\}_{j\in\Lambda}\) of \(X\),
there exists a constant $C'>0$ such that,
for each \((b,j)\in K\times\Lambda\) of \(b\in K\cap \tau_j^{-1}(\mathbb{B}(\mathbf{0},1))\),
$$\varpi_{\tau_j(K\cap\Omega_j)\cap\bar{\mathbb{B}}(\tau_j(b),r)
} (\tau_j(b),\delta)
\leq \frac{C'  \delta^\mu}{r^q}, \quad 0<\delta\leq 1,\, 0< r\leq1.$$
\item[(vii)] weak local $\mu$-HCP of order $q$ at $b\in K$ if for 
{some} holomorphic coordinate ball \((\Omega,\tau)\) centered at \(b\) of 
radius in \((0,\infty]\), there exists a constant \(C>0\) satisfying  
$$\varpi_{\tau(K\cap \Omega)\cap \bar{\mathbb{B}}(\mathbf{0},r)} (\mathbf{0},\delta) \leq \frac{C \delta^\mu}{r^q}, \quad 0<\delta\leq 1,\, 0< r \leq 1.
$$
\item[(viii)] weak local \(\mu\)-HCP of order \(q\) if for 
{some} finite open cover \(\{(O_s,g_s)\}_{s\in J}\) of \(X\) of holomorphic charts with 
{some} regular refined open cover \(\{(\hat{O}_s,g_s)\}\) of \(X\) (\(\hat{O}_s\) is  relatively compact in \(O_s\) 
for each \(s\in J\)), there exists a constant \(C>0\) such that, for each \((b,s)\in K\times J\) of \(b\in K\cap \hat{O}_s\),
$$\varpi_{g_s(K\cap O_s)\cap \bar{\mathbb{B}}(g_s(b),r)}(g_s(b),\delta)
\leq \frac{C\delta^{\mu}}{r^q},\quad 0<\delta\leq1,\,0<r\leq 1.$$
Equivalently, \(K\) has weak local $\mu$-HCP of order $q$ if
for 
{some} finite open cover \(\{(\Omega_{j},\tau_j)\}_{j\in\Lambda}\) of \(X\) 
of holomorphic coordinate balls of \(\tau_j(\Omega_j)=\mathbb{B}(\mathbf{0},5)
\) with the refined covering \(\{(\tau_j^{-1}(\mathbb{B}(\mathbf{0},1)),\tau_j)\}_{j\in\Lambda}\) of \(X\),
there exists a constant $C'>0$ such that,
for each \((b,j)\in K\times\Lambda\) of \(b\in K\cap \tau_j^{-1}(\mathbb{B}(\mathbf{0},1))\),
$$\varpi_{\tau_j(K\cap\Omega_j)\cap\bar{\mathbb{B}}(\tau_j(b),r)
} (\tau_j(b),\delta)
\leq \frac{C'  \delta^\mu}{r^q}, \quad 0<\delta\leq 1,\, 0< r\leq1.$$
\end{itemize}
\end{defn}
Local HCP implies HCP for a compact subset in \(\mathbb{C}^n\) and local HCP implies weak local HCP for a compact subset in \(X\).  
For all the items, checking the inequality for \(0<r\leq r_0\) for some \(r_0\in(0,1]\) is enough due to the monotonicity of extremal functions with respect to the set inclusion. For (i), (ii), (iii), (v) and (vii), checking the inequality for \(0<\delta\leq \delta_0\) for some \(\delta_0\in (0,1]\) is enough  by local boundedness of extremal functions of non-pluripolar sets. For (iv), (vi) and (viii), checking the inequality for \(0<\delta\leq \delta_0\) for some \(\delta_0\in (0,1]\) is enough due to inequality \eqref{eq:local-modulus-of-continuity}. Also by \eqref{eq:local-modulus-of-continuity}, 
$L_F$ is locally $\mu$-H\"older continuous on \(\mathbb{C}^n\) if and only if \(F\) has \(\mu\)-HCP. 

The equivalence in (vi) of this definition is explained as follows. For each \(s\in J\), since \(\overline{\hat{O}_s}\) is compact and is in \(O_s\), there exists positive integer \(N_s\) and \(N_s\) pairs \((b_{s,t},r_{s,t})\) in \(X\times (0,\infty)\) for \(1\leq t\leq N_s\) such that $$\overline{\hat{O}_s}\subset \cup_{1\leq t\leq N_s}\mathbb{B}(b_{s,t},r_{s,t}), \quad  
\cup_{1\leq t\leq N_s}\mathbb{B}(b_{s,t},5r_{s,t})\subset O_s.$$
$$(\Omega_{s,t},\tau_{s,t}):=(g_{s}^{-1}(\mathbb{B}(b_{s,t},5r_{s,t})),r_{s,t}^{-1}(g_{s}-g_s(b_{s,t})),\quad s\in J,\,1\leq t\leq N_s,$$ are holomorphic coordinate balls \(\tau_{s,t}(\Omega_{s,t})=\mathbb{B}(\mathbf{0},5)\) with the refined open cover \(\{(\tau_{s,t}^{-1}(\mathbb{B}(\mathbf{0},1)),\tau_{s,t})\}_{s\in J,1\leq t\leq N_s}\) of \(X\). Let \((b,s)\in K\times J\) be a pair of \(b\in K\cap\hat{O}_{s}\). Then there exists \(1\leq t\leq N_{s}\) such that \(b\in\tau_{s,t}^{-1}(\mathbb{B}(\mathbf{0},1))\). Equivalence follows from
$$\varpi_{g_{s}(K\cap O_{s})\cap\bar{\mathbb{B}}(g_s(b),r_{s,t}r)}(g_{s}(b),r_{s,t}\delta)
=\varpi_{\tau_{s,t}(K\cap \Omega_{s,t})\cap\bar{\mathbb{B}}(\tau_{s,t}(b),r)}(\tau_{s,t}(b),\delta)$$
for \(0<\delta<\infty\), \(0<r<5\). Equivalence in (viii) is also explained by this inequality.
\begin{remark}\label{rmk:local hcp and continuity}
In Definition~\ref{defn:local-mu-hcp}, if \(K\) has local \(\mu\)-HCP of order \(q\) at \(b\in K\), then letting \(\delta\to0^+\) in the inequality (v) implies that \(K\) is {\em locally \(L\)-regular} at \(b\) in the sense of \cite{Ah26}. We in \cite{Ah26} showed that local \(L\)-regularity of \(K\) at \(b\) gives the continuity of \(V_K\) at \(b\). Thus in this case, we know that \(V_K\) is continuous at \(b\).

Moreover, in Definition~\ref{defn:local-mu-hcp}, if \(K\) has weak local \(\mu\)-HCP of order \(q\) at \(b\in K\), then letting \(\delta\to0^+\) in the inequality (v) implies that \(K\) is {\em weakly locally \(L\)-regular} at \(b\) in the sense of  \cite{Ah26}. We in  \cite{Ah26} showed that weak local \(L\)-regularity of \(K\) at \(b\) gives the continuity of \(V_K\) at \(b\). Accordingly, even in this weaker condition, we also know the continuity of \(V_K\) at \(b\).

However, to guarantee the H\"older continuity of extremal functions on \(X\), weak local \(L\)-regularity and local \(L\)-regularity 
are not sufficient, and we need these stronger properties. 
\end{remark}

\begin{proof}[Proof of Theorem~\ref{thm:HCP implies localHCP in CN}] 

First, assume that \(F\) is convex and \(F\) has \(\mu\)-HCP. By the HCP condition, there exists a constant \(C>0\) such that
$$L_F(z)\leq Cd_{\mathbb{C}^n}(z,F)^{\mu},\quad d_{\mathbb{C}^n}(z,F)\leq1.$$
Since \(L_F^*\in\mathcal{L}\) and \(L_F^*\) is locally bounded from above, for the constant 
$$ C':=\max\{C,\sup_{d_{\mathbb{C}^n}(z,F)>1}\frac{L_F(z)}{d_{\mathbb{C}^n}(z,F)^{\mu}}\}>0,$$ 
we have 
\[\label{eq:HCP and HC}
L_F(z)\leq C'd_{\mathbb{C}^n}(z,F)^{\mu},\quad z\in\mathbb{C}^n.\]
Since \(F\) is convex, for each \(a\in F\) and \(0<r\leq1\), the affinely transformed set \(a+(F-a)/r\) contains \(F\). Since \(F\) is compact, there exists a small constant \(\epsilon\in(0,1)\) such that \(\bar{\mathbb{B}}(a,1/\epsilon)\) contains \(F\) for each \(a\in F\). Then we have 
$$\begin{aligned}
L_{F\cap\bar{\mathbb{B}}(a,r)}(z)&= L_{(a+\frac{F-a}{r\epsilon})\cap\bar{\mathbb{B}}(a,\frac{1}{\epsilon})}(a+\frac{z-a}{r\epsilon})\\
&\leq L_F(a+\frac{z-a}{r\epsilon})\\
&\leq {C'}{d_{\mathbb{C}^n}(a+\frac{z-a}{r\epsilon},F)^{\mu}}\\
&\leq C'\|\frac{z-a}{r\epsilon}\|^{\mu}\\
&=\frac{C'}{\epsilon^{\mu}}\frac{\|z-a\|^{\mu}}{r^{\mu}}, \quad z\in\mathbb{C}^n,
\; 0<r\leq1.\end{aligned}$$
Therefore, \(F\) has local \(\mu\)-HCP of order \(\mu\). 

For the second part, assume that \(b\in F\) is a star center of \(F\) and \(F\) has \(\nu\)-HCP at \(b\). By the HCP of \(F\) at \(b\), there exists a constant \(C_b>0\) such that
$$L_F(z)\leq C_b\|z-b\|^{\nu},\quad \|z-b\|\leq1.$$
Since \(L_F^*\in\mathcal{L}\) and \(L_F^*\) is locally bounded from above, for the constant 
$$ C_b':=\max\{C_b,\sup_{\|z-b\|>1}\frac{L_F(z)}{\|z-b\|^{\nu}}\}>0,$$ 
we have 
$$L_F(z)\leq C_b'd_{\mathbb{C}^n}(z,F)^{\nu},\quad z\in\mathbb{C}^n.$$
Since \(b\) is a star center of \(F\), for each \(t\in (0,1]\), the set \(b+(F-b)/t\) contains \(F\). Since \(F\) is compact, there exists \(\epsilon_b\in (0,1)\) such that \(\bar{\mathbb{B}}(b,1/\epsilon_b)\) contains \(F\). Then 
$$\begin{aligned}
L_{F\cap\bar{\mathbb{B}}(b,r)}(z)&= L_{(b+\frac{F-b}{r\epsilon_b})\cap\bar{\mathbb{B}}(b,\frac{1}{\epsilon_b})}(b+\frac{z-b}{r\epsilon_b})\\
&\leq L_F(b+\frac{z-b}{r\epsilon_b})\\
&\leq \frac{C_b'}{\epsilon_b^{\nu}}\frac{\|z-b\|^{\nu}}{r^\nu}, \quad z\in\mathbb{C}^n,\; 0<r\leq1.\end{aligned}$$
Therefore, \(F\) has local \(\nu\)-HCP of order \(\nu\) at \(b\). 
\end{proof}

\begin{remark}\label{rmk:convex HCP to local HCP}
By the inequality ~\eqref{eq:HCP and HC}, a compact subset \(F\) of \(\mathbb{C}^n\) has HCP if and only if \(L_F\) is H\"older continuous on \(\mathbb{C}^n\). Suppose a compact subset \(F\subset\mathbb{C}^n\) has \(\mu\)-HCP with a H\"older coefficient \(C\) as
$$L_F(z)\leq C\delta^{\mu},\quad 0<\delta\leq1,\;\|z-a\|\leq \delta,\;a\in F.$$
Let \(z'\in\mathbb{C}^n\) be a point in \(\mathbb{C}^n\) such that \(d_{\mathbb{C}^n}(z',F)>1\). Then there exists \(z''\in F\) such that 
 \(d_{\mathbb{C}^n}(z',F)=\|z'-z''\|\). Let \(N(z'):=
\lceil d_{\mathbb{C}^n}(z',F)\rceil>1\) be the smallest positive integer that is larger than or equal to \(d_{\mathbb{C}^n}(z',F)>1\). Then there exist \(N(z')+1\) points \(\xi_0:=z'\), \(\xi_1\), ..., \(\xi_{N(z')}:=z''\) in \(\mathbb{C}^n\) such that 
$$\|\xi_j-\xi_{j-1}\|= \frac{d_{\mathbb{C}^n}(z',F)}{N(z')}\leq1,\quad 1\leq j\leq N(z').$$
Then by the telescoping with the help of \eqref{eq:local-modulus-of-continuity}, we have 
$$\begin{aligned}
L_F(z')&=L_F(z')-L_F(z'')\\
&\leq \sum_{j=1}^{N(z')}|L_{F}(\xi_j)-L_F(\xi_{j-1})|\\
&\leq \sum_{j=1}^{N(z')}\varpi_F(1)\\
&\leq N(z')C\leq 2Cd_{\mathbb{C}^n}(z',F),\quad d_{\mathbb{C}^n}(z',F)>1.
\end{aligned}$$
Therefore, we get 
$$L_F(z)\leq 2C\max\{d_{\mathbb{C}^n}(z,F),d_{\mathbb{C}^n}(z,F)^{\mu}\},\quad z\in\mathbb{C}^n.$$
By this inequality, we know that in the proof of Theorem~\ref{thm:HCP implies localHCP in CN}, in the case of \(\mu=1\), \(2C\) can serve as \(C'\). 
\end{remark}

\subsection{Uniform density in capacity}
We define a uniform density in capacity of a compact subset of \(X\).
\begin{defn}
\label{defn:unifden} Take a finite open cover \(\{(\Omega_{j},\tau_j)\}_{j\in \Lambda}\) of \(X\) of holomorphic coordinate balls of \(\tau_j(\Omega_j)=\mathbb{B}(\mathbf{0},5)\subset\mathbb{C}^n\) with the refined cover \(\{(\tau_j^{-1}(\mathbb{B}(\mathbf{0},1)),\tau_j
)\}_{j\in \Lambda}\) of \(X\). 
We say that a compact subset $K$ of \(X\) has {a uniform density in capacity of order \(q\)}, if there exist constants $q>0$ and $\varkappa>0$ such that, for each \((a,j)\in K\times \Lambda\) with \(a\in K\cap \tau_j^{-1}(\mathbb{B}(\mathbf{0},1))\),
with the notation \(\bar{B}_j(a,r):=\tau_j^{-1}(\bar{\mathbb{B}}(\tau_j(a),r))\), the following inequality holds:
\[\label{eq:cap-density} 
\inf_{0<r\leq 1} \frac{Cap_{\omega}(K \cap \bar{B}_j(a,r))}{r^q} \geq \varkappa.
\]
We say that \(K\) has { a uniform density in capacity of order \(q\) at \(a\in K\)} if \eqref{eq:cap-density} holds for each \(j\in\Lambda\) with \(a\in \tau_j^{-1}(\mathbb{B}(\mathbf{0},1))\).
\end{defn}

\begin{remark}
\label{rmk:unifden}
Let us make the same settings as Definition~\ref{defn:unifden}. By the monotonicity of the capacity along set inclusion, 
checking the inequality \eqref{eq:cap-density} for \(0<r\leq r_0\) for some \(r_0\in(0,1]\) is enough for the uniform density in capacity.

Since the Euclidean distance induced from any coordinate chart and the Riemannian distance \(d_{\omega}\) for \((X,\omega)\) are comparable (bi-Lipschitz equivalent) on each compact subset contained in the domain of the coordinate chart,
\(K\) has a uniform density in capacity of order \(q\) if and only if there exist constants \(R_0>0\), \(q''=q>0\) and \(\varkappa''>0\) such that for each \(a\in K\),
\[\label{eq:cap-density-global}
	\inf_{0<R\leq R_0} \frac{Cap_{\omega}(K\cap\{p\in X:d_{\omega}(a,p)\leq R\})}{R^{q''}} \geq \varkappa''
    .\] 
Also, \(K\) has a uniform density in capacity of order \(q\) at \(a\in K\) if and only if \eqref{eq:cap-density-global} holds. 
Therefore, the property of a uniform density in capacity and the order \(q\) are independent of the choice of an open cover, while \(\chi\) depends on the cover.

Let \(\tilde{\omega}>0\) be another Hermitian metric on \(X\).
Since \(X\) is compact, \(\omega\) and \(\tilde{\omega}\) are comparable. In other words, for some constant \(c'\geq1\), \((c')^{-1}\omega\leq\tilde{\omega}\leq c'\omega\).
Then
$$(c')^{-n}Cap_{\omega}(\cdot)
\leq Cap_{\omega/c'}(\cdot)\leq Cap_{\tilde{\omega}}(\cdot)
\leq Cap_{c'\omega}(\cdot)\leq (c')^n Cap_{\omega}(\cdot).$$
Plus, \(d_{\omega}\) and \(d_{\tilde{\omega}}\) are bi-Lipschitz equivalent.
Therefore, the property of a uniform density in capacity and the order \(q\) are independent of the choice of an Hermitian metric, while \(\chi\) depends on the Hermitian metric. 
\end{remark}

\subsection{Relation of extremal functions and uniform density in capacity in the complex coordinate space} 
Suppose $K\subset X$ is a compact subset contained in a single holomorphic coordinate unit ball in the common-center coordinate \(5\)-radius ball  $(\Omega,\tau)$ with \(\tau(\Omega)=\mathbb{B}(\mathbf{0},5)\). 
Then \(\bar{\mathbb{B}}(\tau(K),1)\subset \mathbb{B}(\mathbf{0},2)\Subset\mathbb{B}(\mathbf{0},3)\Subset\mathbb{B}(\mathbf{0},5)\). 
By inequalities \eqref{eq:omcap-under-relcap} and \eqref{eq:relcap-under-omcap}, 
$K$ has a uniform density in capacity if and only if for some constants \(q_0>0\) and \(\chi_0>0\), for each \((a,r)\in K\times (0,1]\),
\[\label{eq:cap-density-loc}
\frac{Cap(\tau(K) \cap  \bar{\mathbb{B}}(\tau(a),r)\cap\mathbb{B}(\mathbf{0},2), \mathbb{B}(\mathbf{0},3))}{r^{q_0}}
\geq \varkappa_0.
\]
\(\mathbb{C}^n\)-version of \eqref{eq:cap-comparison}, \cite[Theorem~2.1]{AT84} or \cite[Theorem~2.7]{Ko05}, tells that inequality \eqref{eq:cap-density-loc} is equivalent to the existence of constants $A_1>0$ and $q_1>0$ satisfying 
\[\label{eq:cap-to-classicalSiciak}
\sup_{\bar{\mathbb{B}}(\mathbf{0},3)} L_{\tau(K)\cap  \bar{\mathbb{B}}(\tau(a),r)}^* \leq \frac{A_1}{r^{q_1}},\quad (a,r)\in K\times(0,1].\]
Specifically, if \eqref{eq:cap-density-loc} holds, then \eqref{eq:cap-to-classicalSiciak} holds with \(q_1=q_0\) and \(A_1=\varkappa_0^{-1}G(2)\) for some function \(G(t)\in(0,\infty)\) only depending on \(t\in(0,\infty)\). 
More generally, by \cite[Theorem~2.7]{Ko05}, for a compact non-pluripolar subset \(F\) of \(\mathbb{C}^n\), if \(0<r<R<\infty\) and \(F\subset \mathbb{B}(\mathbf{0},r)\), we have
\[\label{eq:capacity comparison in Cn}
\frac{2\pi}{Cap(F,\mathbb{B}(\mathbf{0},R))^{1/n}}\leq\sup_{\bar{\mathbb{B}}(\mathbf{0},R)}L_F^*\leq \frac{G(r)}{Cap(F,\mathbb{B}(\mathbf{0},R))}.
\] Conversely, if 
\eqref{eq:cap-to-classicalSiciak} holds, then 
\eqref{eq:cap-density-loc} holds with \(q_0=nq_1\) and \(\varkappa_0=(2\pi/A_1)^n\): for each \((a,r)\in K\times (0,1]\),
\[\label{eq:cap-density-sufficient-loc}
\frac{Cap(\tau(K)\cap \bar{\mathbb{B}}(\tau(a),r),\mathbb{B}(\mathbf{0},3))}{r^{nq_1}} \geq\frac{\int_{\mathbb{C}^n}(dd^c\frac{1}{2}\log{(\|z\|^2+1)})^n}{(A_1)^n}= \frac{(2\pi)^n}{(A_1)^n}.
\]

The following lemma (proved by Nguyen \cite[Lemma~3.3]{Ng24}) tells that local HCP implies a uniform density in density in \(\mathbb{C}^n\). 
\begin{lem}
\label{lem:cap-density} Let $K$ be a compact subset of $
\mathbb{C}^n$. Suppose \(K\) has local {$\mu$-HCP} of order $q$. Then \(K\) has a uniform density in capacity of order \(nq\). 
\end{lem}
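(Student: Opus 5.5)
The plan is to convert the local H\"older estimate at the center into an upper bound for the supremum of the extremal function of the localized set, and then to turn that supremum bound into a capacity lower bound via the inequality \eqref{eq:capacity comparison in Cn}.

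Fix $a\in K$ and $0<r\leq r_0$, with $r_0\leq 1$ small, and set $F_{a,r}:=K\cap\bar{\mathbb{B}}(a,r)$. Since $K$ has local $\mu$-HCP of order $q$, we have
$$L_{F_{a,r}}(z)\leq C\delta^\mu r^{-q}\quad\text{whenever }\|z-a\|\leq\delta\leq1.$$
Choose a fixed large ball $\mathbb{B}(\mathbf{0},R)$ containing $\bar{\mathbb{B}}(K,2)$. We need a bound for $\sup_{\bar{\mathbb{B}}(\mathbf{0},R)}L_{F_{a,r}}^*$. We do not use the H\"older estimate at large distances. Instead we use the Lelong class growth: $L_{F_{a,r}}^*\in\mathcal{L}$, so by the Bernstein--Walsh type inequality
$$L_{F_{a,r}}^*(z)\leq \sup_{\bar{\mathbb{B}}(a,\delta)}L_{F_{a,r}}^*+\log^+\frac{\|z-a\|}{\delta}.$$
This inequality holds because $u-\sup_{\bar{\mathbb{B}}(a,\delta)}u$ is a competitor for $L_{\bar{\mathbb{B}}(a,\delta)}=\log^+(\|z-a\|/\delta)$.

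Take $\delta=1$. Then for $z\in\bar{\mathbb{B}}(\mathbf{0},R)$,
$$L_{F_{a,r}}^*(z)\leq Cr^{-q}+\log^+(2R).$$
This uses that the sup over the closed ball equals the sup over its interior for the regularization, which is the open-set remark at the start of Section~\ref{sec:c}. Since $r\leq1$, it follows that $\sup_{\bar{\mathbb{B}}(\mathbf{0},R)}L_{F_{a,r}}^*\leq A_1r^{-q}$ with $A_1$ independent of $a$ and $r$.

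Now apply the left inequality in \eqref{eq:capacity comparison in Cn}:
$$Cap(F_{a,r},\mathbb{B}(\mathbf{0},R))^{1/n}\geq 2\pi/\sup_{\bar{\mathbb{B}}(\mathbf{0},R)}L_{F_{a,r}}^*\geq 2\pi r^q/A_1.$$
Hence $Cap(F_{a,r},\mathbb{B}(\mathbf{0},R))\geq(2\pi/A_1)^nr^{nq}$, which is the density \eqref{eq:cap-density-Cn} of order $nq$ with $O=\mathbb{B}(\mathbf{0},R)$ and $\delta=r_0$. Non-pluripolarity of $F_{a,r}$ follows from the same finiteness of $L^*_{F_{a,r}}$.

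The main obstacle is justifying the passage from the pointwise estimate of $L_{F_{a,r}}$ near $a$ to the regularized supremum on a large ball. The intended way through is to take $\delta=1$ in the local HCP bound, which controls $L_{F_{a,r}}^*$ on the open unit ball around $a$. The $\log^+$ comparison then propagates this bound to all of $\bar{\mathbb{B}}(\mathbf{0},R)$ at the cost of an additive constant, which is absorbed into $r^{-q}$.
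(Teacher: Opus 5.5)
Your proposal is correct and is essentially the paper's argument: bound $\sup L^*_{K\cap\bar{\mathbb{B}}(a,r)}$ on a fixed large ball by a constant times $r^{-q}$, uniformly in $a$, and then use the Monge--Amp\`ere mass lower bound $Cap\geq(2\pi/\sup L^*)^n$ from \eqref{eq:cap-density-sufficient-loc}, which is the left side of \eqref{eq:capacity comparison in Cn}. The only difference is cosmetic. The paper rescales to $K\subset\mathbb{B}(\mathbf{0},1)$ and stretches the H\"older bound to $\delta\leq 4$, whereas you keep a large ball $\mathbb{B}(\mathbf{0},R)$ and propagate the $\delta=1$ bound by the $\log^+$ growth of the Lelong class, which is if anything a cleaner justification of that step (comparing with $L_{\mathbb{B}(a,1)}$ for the open ball also sidesteps the closed-versus-open supremum point you raise).
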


\begin{proof} 
We can reduce to the case $K \subset \mathbb{B}(\mathbf{0},1)$. There exists a uniform constant $C>0$ such that
$$\varpi_{K \cap \bar{\mathbb{B}}(a,r)} (a,\delta) \leq \frac{C \delta^\mu}{r^q}, \quad 0<\delta \leq 4,\quad 0<r\leq1,\quad a\in K.$$
$$\sup_{\bar{\mathbb{B}}(\mathbf{0},3)} L_{K\cap \bar{\mathbb{B}}(a,r)} (z) \leq 4^{\mu}C/r^q, \quad 0<r\leq 1, \quad a\in K.$$
$$\frac{Cap(K\cap \bar{\mathbb{B}}(a,r),\mathbb{B}(\mathbf{0},3))}{r^{nq}} \geq \frac{(2\pi)^n}{(4^{\mu}C)^n}, \quad 0<r\leq 1, \quad a\in K.$$
The last inequality is due to \eqref{eq:cap-density-sufficient-loc}.
\end{proof}

\section{Extension of H\"older continuity of extremal functions on compact subsets}\label{sec:hcp}

\subsection{Demailly estimates} 
The following Demailly estimates in \cite{De94} help us to prove that the H\"older continuity of extremal function \(V_K\) on \(K\) for a compact subset \(K\) in \(X\) implies the H\"older continuity on \(X\). Since \(X\) is compact, there exists a constant \(R_{\omega}>0\) depending only on \((X,\omega)\) such that, for each function \(v\in PSH(X,\omega)\cap L^{\infty}(X)\) and real number \(0<\delta\leq R_{\omega}\), 
the regularization
\[\label{eq:phie}
\Psi_\delta v(x):=\frac{1}{\delta ^{2n}}\int_{\zeta\in T_x'X}
v({\text{exph}}_x(\zeta))\chi\Big(\frac{|\zeta|^2_{\omega(x)}}{\delta ^2}\Big)\,dV_{\omega(x)}(\zeta),\quad x\in X,\]
is a smooth function on \(X\).
Here, \(\text{exph}_x:T_x'X \to X\), given in \cite{De94}, is the formal holomorphic part of the exponential map ${\exp}_x: T_x'X \to X$ with respect to  Chern connection of \((X,\omega)\) on the holomorphic tangent space \(T_x'X\) at \(x\in X\). $dV_{\omega(x)}(\zeta):=(dd^c\|\zeta\|_{\omega(x)}^2)^n/(2^nn!)$ is the Lebesgue measure on $(T_x'X, \omega(x))$ where 
$$\omega(x)=\sum_{j,k=1}^n \frac{\sqrt{-1}h_{j\bar{k}}}{2}
dz^j\wedge d\bar{z}^k,\quad
\zeta=\sum_{j=1}^n\zeta^j\frac{\partial}{\partial z^j}
\in T_x'X,$$
$$\|\zeta\|_{\omega(x)}^2=\sum_{j,k=1}^n \frac{h_{j\bar{k}}}{2}\zeta^j\bar{\zeta}^k,$$
so that \(\int_{\|\zeta\|_{\omega(x)}\leq1}dV_{\omega(x)}(\zeta)
=\pi^n/n!\) (the volume of a unit ball in \(\mathbb{C}^n\)).
$\chi:[0,\infty)\rightarrow[0,\infty)$ is a fixed cut-off function supported on \([0,1]\) with \(\chi([0,1))\subset(0,\infty)\) and $\int_{\mathbb C^n}\chi(\Vert z\Vert^2)\,dV(z)=1$. For convention we define \(\Psi_0v\) as \(v\).

We import the following lemma from \cite[Lemma~4.1]{KN19}, which is a well-modified version of \cite[Lemma~1.12]{BD12}. 
Let \(\|\cdot\|_X\) be the supremum norm on \(X\) and \(USC(X)\) be the set of upper semicontinuous functions from \(X\) into \(\mathbb{R}\cup\{-\infty\}\).  
\begin{lem}\label{lem:kis} 
There exists a constant \(c_0>0\) depending only on \((X,\omega)\) such that, for a given $v\in PSH(X,\omega)\cap L^\infty(X)$, 
the functions 
\begin{equation}\label{kisleg}
USC(X)\ni v_{\delta, c}(x):= \inf _{ t\in (0,\delta ]}(\Psi_{t }v(x) + c_1t^2 +c_1t  -c \log\frac{t}{\delta }), \quad x\in X
\end{equation}
defined for \((c,\delta)\in  (0,\infty)\times (0,R_{\omega}]\)
satisfy the following two properties for constants \(c_1>0\) and \(\delta_1\in(0,R_\omega]\) depending only on \((X,\omega,\|v\|_{X})\): 
\begin{itemize}
\item[(i)]$[0,\delta_1]\ni t\mapsto\Psi_{t }v+c_1t^2$ is increasing, and 
the infimum in \eqref{kisleg} is attained for each \((x,c,\delta)\in  X\times (0,\infty)\times (0,\delta_1]\). 
\item[(ii)] $\omega+dd^c  v_{\delta,c }\geq -(c_0c+2c_1\delta)\,\omega$ for each \((c,\delta)\in  (0,\infty)\times (0,\delta_1]\).
\end{itemize}
\end{lem}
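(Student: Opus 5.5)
The statement is the Kiselman-type regularization lemma, Lemma~\ref{lem:kis}, imported from \cite[Lemma~4.1]{KN19}. I would prove it by following the Kiselman infimum argument of \cite[Lemma~1.12]{BD12}, with Demailly's estimates for $\Psi_t v$ from \cite{De94} as the main input.

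The plan is to view $\Psi_t v(x)$ as a function $U(x,w):=\Psi_{|w|}v(x)$ on $X\times\{w\in\mathbb{C}:|w|<R_\omega\}$, which depends only on $|w|$. Demailly's estimate on a Hermitian manifold gives constants $c_1,K,\delta_1$, depending only on $(X,\omega)$ and $\|v\|_X$, such that two things hold on $X\times\{0<|w|<\delta_1\}$. First, $t\mapsto\Psi_t v+Kt^2$ is increasing. Second,
$$\omega_x+dd^c_{x,w}\bigl(U(x,w)+K|w|^2\bigr)\geq -\bigl(c_0\,\lambda(x,|w|)+K|w|^2\bigr)\,\omega_x-K\,|w|^2\,\tfrac{i}{2}dw\wedge d\bar w\;\text{-type error terms}.$$
Here $\lambda(x,t)$ is the Lelong-number-type quantity $\nu(v,x,t)=t\,\partial_t\Psi_t v$, controlled by the derivative in $t$.

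I would take $c_1:=K$ and $\delta_1$ as in \cite{De94}, adjusted so that the $c_1t$ term absorbs the Hermitian torsion error. The torsion of $\omega$ produces the first-order error $O(t)$, and this is exactly why the extra $c_1t$ term appears; it is absent in the Kähler setting. With these choices, item (i) is the monotonicity of $\Psi_t v+c_1t^2$. Attainment of the infimum then follows by a compactness argument. As $t\to0^+$ the term $-c\log(t/\delta)$ tends to $+\infty$ while $\Psi_t v$ stays bounded by $\|v\|_X$. The function of $t$ is continuous on $(0,\delta]$, so the infimum is a minimum.

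For item (ii), I would apply Kiselman's minimum principle to
$$W(x,w):=U(x,w)+c_1|w|^2+c_1|w|-c\log\frac{|w|}{\delta},$$
viewed as a function of $(x,w)$ and depending only on $|w|$. Locally in a chart $\omega\leq C\,dd^c\|z\|^2$, so it suffices to check that $W(x,w)+(c_0c+2c_1\delta+1)\,\rho(x)$ is psh in $(x,w)$. Here $\rho$ is a local potential with $dd^c\rho$ comparable to $\omega$, and the $1$ accounts for $\omega$ itself. Its partial infimum over $|w|\leq\delta$ would then be psh in $x$, which gives $\omega+dd^c v_{\delta,c}\geq-(c_0c+2c_1\delta)\omega$.

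The key computation is Demailly's Hessian estimate in $(x,w)$. The negative part of $dd^c_{x,w}U$ is bounded by $c_0\,\lambda\,\omega+c_1|w|^2\omega$ plus mixed terms. The $w$-direction gets positivity from $c_1|w|^2+c_1|w|$, and $-c\log|w|$ is pluriharmonic for $w\neq0$. The term $\lambda$ is only controlled where the derivative $t\partial_t\Psi_t$ is bounded by $c$. I would handle this by applying the minimum principle to $\max$ or $\min$ variants, or by using the fact that at a minimizing $t$ one has $t\partial_t(\Psi_t v+c_1t^2)\approx c$. This identifies the error coefficient as $c_0c$, and the residual $2c_1\delta\,\omega$ comes from the terms $c_1t^2+c_1t$ with $t\leq\delta\leq\delta_1\leq1$.

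The main obstacle is this Hessian estimate with correct constants in the Hermitian, non-Kähler case: one has to control the mixed $x$–$w$ derivatives and the torsion terms. I would import it directly from \cite[Prop.~3.8]{De94} and \cite[Lemma~1.12]{BD12}, checking only that non-closedness of $\omega$ contributes $O(t)$ errors absorbed by $c_1t$.
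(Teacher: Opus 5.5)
Your overall route matches the paper's. The paper does not reprove this lemma: it imports \cite[Lemma~4.1]{KN19}, which is the Kiselman--Legendre regularization of \cite[Lemma~1.12]{BD12} built on Demailly's estimates. Its only remark is that the normalization $c_1(t^2-\delta^2)+c_1(t-\delta)$ used there differs from $c_1t^2+c_1t$ by a $t$-independent constant, which does not affect the infimum's minimizers or $dd^c$. Your argument for (i) is fine. However, the self-contained part of your sketch of (ii) contains a step that fails as written.

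The reduction ``it suffices that $W+(c_0c+2c_1\delta+1)\rho$ is psh, with $dd^c\rho$ comparable to $\omega$'' cannot give the stated constant. Since $\omega$ is not $dd^c$-closed, there is no local $\rho$ with $dd^c\rho=\omega$. If $\omega\le dd^c\rho\le C\omega$, the minimum principle only yields $dd^cv_{\delta,c}\ge-(1+c_0c+2c_1\delta)C\omega$, i.e.
$$\omega+dd^cv_{\delta,c}\ge-\bigl((C-1)+C(c_0c+2c_1\delta)\bigr)\omega.$$
This error does not vanish as $c,\delta\to0$. Yet that vanishing is exactly what the proof of Lemma~\ref{lem:property} uses: it sets $c_0c_\delta+2c_1\delta=\delta^\mu$ and divides by $1+\delta^\mu$.

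You need to freeze coefficients instead. Near each $x_0$, take the quadratic $\rho_{x_0}$ with $dd^c\rho_{x_0}=\omega(x_0)$, so that $(1+\eta)^{-1}dd^c\rho_{x_0}\le\omega\le(1+\eta)dd^c\rho_{x_0}$ on a small neighbourhood. Apply the minimum principle there (it is local in $x$), and then let $\eta\to0$.

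Separately, the unbounded term $c_0\lambda$ is not actually handled. Kiselman's principle needs the Hessian bound along the whole holomorphic discs used in its proof, and these pass through non-minimizing radii. So knowing $\lambda\le c$ at the minimizer is not enough by itself, and ``max or min variants'' is not an argument. At that point you are simply citing \cite[Lemma~1.12]{BD12}, as the paper does.

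Finally, your account of the linear term is off. The term $c_1t$ is not a torsion correction absent in the K\"ahler case: the same transform, with the same $2c_1\delta$ loss, is used on K\"ahler manifolds (cf.\ \cite{DDGHKZ}). Moreover, $c_1t^2+c_1t$ is independent of $x$ and psh in $w$, so it cannot be the source of the negative term $2c_1\delta\,\omega$. That loss comes from the $|w|$-dependent error terms of Demailly's Hessian estimate at $|w|\le\delta$. The role of $c_1|w|^2+c_1|w|$ is only to supply the positivity in the $w$-direction needed to absorb the error terms there.
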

Note that \(c_1t^2+c_1t\) in \eqref{kisleg} was \(c_1(t^2-\delta^2)+c_1(t-\delta)\) in \cite[Lemma~4.1]{KN19}. The difference \(c_1\delta^2+c_1\delta\) does not depend on \(t\), comes out of the infimum and \(dd^c(c_1\delta^2+c_1\delta)=0\). 

\cite[Remark~4.6]{De94} proved that \(\psi_{\delta}v\) in  \eqref{eq:phie} is 
uniformly (on \(X\)) \(O(\delta^2)\)-close to 
\[\label{eq:convoution-compare}
v*\chi_{\delta}(p):=\int_{\bar{\mathbb{B}}(\mathbf{0},1)}v\circ z^{-1}(z(p)+\delta s)\chi(\|s\|^2)dV(s),\quad p\in z^{-1}(\Omega_{\delta}),\] 
where \(z:U\to\Omega\subset{\mathbb{C}}^n\) is a holomorphic coordinate map 
such that \(z(x)=\mathbf{0}\) and \((\partial/\partial z^j)_j\) is an orthonormal basis of \(T_x'X\). \(\Omega_\delta:=\{y\in \Omega:d(y,\partial\Omega)>\delta\}\). 
For this \(z\), 
\[\label{eq:av-loc}	\check{v}_\delta (p) := \frac{1}{V_{2n}(\bar{\mathbb{B}}(\mathbf{0},1)) \delta^{2n}} \int_{\bar{\mathbb{B}}(\mathbf{0},\delta)} v\circ z^{-1}(z(p)+y) dV_{2n}(y),\quad p\in z^{-1}(\Omega_{\delta})\]
can be compared to \(v*\chi_{\delta}\) on \(\Omega_{\delta}\) by
\cite[(5.11), (5.12)]{KN23}: there exists a \(\delta\)-independent constant $C>0$ such that, for each \(\delta>0\) with \(\Omega_{\delta}\neq\emptyset\),
\[\label{eq:cov-reg}
v*\chi_\delta-v\leq C (\check{v}_\delta-v),\quad
v_{\delta/2}-v\leq C (v*\chi_\delta-v).\]
\eqref{eq:convoution-compare}, \eqref{eq:cov-reg} and 
\cite[Remark~4.2]{GKZ08}(or \cite[Theorem~3.2]{Z20}) together give
\begin{cor}\label{cor:test-h} 
Let \(v\in PSH(X,\omega)\cap L^{\infty}(X)\). If constants \(C>0\), \(\delta_0>0\) and \(\mu\in(0,1]\) satisfy
$\Psi_\delta v - v\leq C\delta^\mu$ for \(\delta\in (0,\delta_0]\), 
then $v$ is $\mu$-H\"older continuous.
\end{cor}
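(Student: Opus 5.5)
The plan is to reduce the global $\mu$-H\"older continuity of $v$ to a local statement in coordinate charts, and then invoke the Euclidean criterion of \cite[Remark~4.2]{GKZ08} (equivalently \cite[Theorem~3.2]{Z20}). That criterion says: a function that is psh, up to a smooth term, on a domain $\Omega\subset\mathbb{C}^n$ and whose ball averages satisfy $\check{v}_\delta - v\le C\delta^\mu$ on $\Omega_\delta$ is $\mu$-H\"older continuous on compact subsets of $\Omega$.

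First, fix a finite atlas of holomorphic coordinate balls $(U,z)$ covering $X$, with shrunken balls still covering $X$. On each $U$ write $v=u-\rho$, where $\rho$ is a smooth strictly psh local potential with $dd^c\rho\ge\omega$. Then $u=v+\rho$ is psh and bounded on $U$. Since $\rho$ is smooth, the differences $\check{\rho}_\delta-\rho$ and $\rho*\chi_\delta-\rho$ are $O(\delta^2)$, so any H\"older estimate for $u$ of order $\mu\le1$ transfers to $v$ and back.

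Second, convert the hypothesis on $\Psi_\delta v$ into a bound on Euclidean ball averages. Near each point $x$ I would use the adapted coordinate $z$ from \eqref{eq:convoution-compare}, so \cite[Remark~4.6]{De94} gives $|\Psi_\delta v - v*\chi_\delta|\le C'\delta^2$ uniformly, and hence $v*\chi_\delta - v\le C\delta^\mu + C'\delta^2\le C''\delta^\mu$. By the second inequality of \eqref{eq:cov-reg}, $v_{\delta/2}-v\le C(v*\chi_\delta - v)\le C C''\delta^\mu$, where I read $v_{\delta/2}$ as the ball average $\check v_{\delta/2}$; this reading is needed for the comparison to make sense. Rescaling $\delta$ yields $\check v_\delta - v\le C_3\delta^\mu$ in that chart for small $\delta$.

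Third, this is the main obstacle: the adapted coordinates depend on $x$, whereas the Euclidean criterion needs a single fixed chart. I would handle it in one of two ways. The first option is to note that the constants in \cite[Remark~4.6]{De94} and \eqref{eq:cov-reg} are uniform over $x$ in a compact set, because the normalizing linear changes of coordinates have uniformly bounded distortion. A ball average in one chart is then comparable to a ball average of comparable radius in another chart, and this comparison is implemented by monotonicity of averages of psh functions in the radius, applied after adding $\rho$ to $v$. The second option is simply to take the constants in \eqref{eq:cov-reg} to be valid in any fixed chart. With either option, \cite[Remark~4.2]{GKZ08} gives $|u(p)-u(p')|\le C_4|z(p)-z(p')|^\mu$ on each shrunken ball.

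Finally, a Lebesgue-number argument globalizes the estimate. Pairs of points at small distance lie in a common shrunken chart, and pairs at large distance are handled by $|v(p)-v(p')|\le 2\|v\|_X\le C d_\omega(p,p')^\mu$. Since chart distances are bi-Lipschitz equivalent to $d_\omega$, it follows that $v$ is $\mu$-H\"older continuous on $X$.
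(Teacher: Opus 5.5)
Your route is the paper's: its proof of this corollary is only the chain \cite[Remark~4.6]{De94}, the second inequality in \eqref{eq:cov-reg} (where $v_{\delta/2}$ does mean $\check v_{\delta/2}$), and \cite[Remark~4.2]{GKZ08}. Your local potentials, chart bookkeeping and globalization make explicit what the paper leaves implicit. Two points need correcting.

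First, the cross-chart comparison. Your second option misplaces the difficulty. The constants in \eqref{eq:cov-reg} are universal for psh functions in any chart; the chart dependence enters through Demailly's comparison. $\Psi_\delta v(x)$ is close to a standard convolution only in coordinates orthonormal at $x$, and in a fixed chart it is close to a convolution against an ellipsoidal kernel. So a cross-chart comparison cannot be avoided, and your first option is the right fix. However, radial monotonicity of ball means does not by itself compare the mean over $z(x)+A_x^{-1}\mathbb{B}(\mathbf{0},\delta)$ with a Euclidean ball mean, because $u-u(x)$ changes sign. For example, the harmonic but non-psh function $|z_1|^2-|z_2|^2$ has complex-ellipsoid means below its central value. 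The comparison needs plurisubharmonicity. Disintegrate both means along complex lines through $x$, which the $\mathbb{C}$-linear map $A_x$ permutes. Then use that circle means on each line are nondecreasing in the radius, and bound the Jacobian of the induced map of $\mathbb{P}^{n-1}$ in terms of $\|A_x\|\,\|A_x^{-1}\|$. This shows that, for $r\le\delta/\|A_x\|$, $\check u_r(x)-u(x)$ is at most a constant times the ellipsoid mean minus $u(x)$.

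Second, the endpoint $\mu=1$. The Euclidean criterion of \cite{GKZ08}, \cite{Z20} gives $\mu$-H\"older continuity only for $\mu<1$, and the corollary is actually false at $\mu=1$; this is a defect of the statement, not only of your proof. Take $u(z)=\mathrm{Re}(z)\log(1/|z|)+2|z|$ on $\mathbb{C}$. It satisfies $0\le\Delta u=2(|z|-\mathrm{Re}\,z)/|z|^2\le 4/|z|$, so the Riesz representation of disc means gives $\check u_\delta-u\le 4\delta$ everywhere. Yet $u$ is not Lipschitz at $0$. Now take a small multiple of a cut-off of $u$ in a coordinate disc of a compact Riemann surface. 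This is a bounded $\omega$-psh $v$ that is not Lipschitz, and it satisfies $\Psi_\delta v-v\le C\delta$ by the first inequality of \eqref{eq:cov-reg} and \cite[Remark~4.6]{De94}. So your argument, like the paper's, proves the corollary only for $\mu\in(0,1)$. That is all Lemma~\ref{lem:property} and Proposition~\ref{prop:W} use.
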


\subsection{H\"older continuity of extremal functions}
We first prove that the H\"older continuity of \(V_K\) on \(K\)
implies the H\"older continuity of \(V_K\) on \(X\).
\begin{lem}\label{lem:property} Let \(K\) be a compact subset of \(X\) and \(\mu\in(0,1)\).
Suppose \(V_K\) is \(\mu\)-H\"older continuous on \(K\), i.e., there exist constants \(C>0\) and \(\delta_0>0\) 
satisfying \(V_K(x)\leq C\delta^{\mu}\) for \(d_{\omega}(x,K)\leq \delta\leq\delta_0\). Then \(V_K\) is \(\mu\)-H\"older continuous on \(X\).
\end{lem}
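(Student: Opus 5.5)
The plan is to verify the criterion of Corollary~\ref{cor:test-h}: it suffices to find constants \(C'>0\) and \(\delta_2>0\) with
\[
\Psi_\delta V_K - V_K\leq C'\delta^{\mu},\qquad 0<\delta\leq\delta_2 .
\]
The hypothesis forces \(V_K^*=0\) on \(K\), so \(K\) is non-pluripolar. By Corollary~\ref{cor:semicontinuity}, \(V_K=V_K^*\in PSH(X,\omega)\cap L^\infty(X)\) is continuous, and Lemma~\ref{lem:kis} applies to \(v:=V_K\). The idea, following the Kähler argument of Nguyen, is to use the regularization \(v_{\delta,c}\) of Lemma~\ref{lem:kis} to build a competitor for \(V_K\).

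\textbf{Construction of the competitor.} Fix a small \(\delta\). Choose \(c=\delta^{\mu}\) and consider
\[
w:=(1+c_0c+2c_1\delta)^{-1}\,v_{\delta,c}.
\]
By Lemma~\ref{lem:kis}(ii), \(w\) is \(\omega\)-psh up to a harmless normalization; one may need to subtract a constant or use \(\omega\geq0\) to handle the scaling, since \(\omega+dd^c(\lambda u)\geq0\) whenever \(\lambda\le1\), \(u\geq -\) const and \(\omega+dd^cu\ge -(\lambda^{-1}-1)\omega\). The function \(v_{\delta,c}\) is upper semicontinuous and bounded.

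\textbf{Bound on \(K\).} For \(x\in K\), take \(t=\delta\) in the infimum. This gives \(v_{\delta,c}(x)\leq \Psi_\delta v(x)+c_1\delta^2+c_1\delta\). Since \(\Psi_\delta v(x)\) averages \(v\) over \(\mathrm{exph}_x\) of a \(\delta\)-ball, whose points lie at \(d_\omega\)-distance at most \(A\delta\) from \(x\), the hypothesis yields \(\Psi_\delta v(x)\le C(A\delta)^{\mu}\). Hence \(v_{\delta,c}\le C''\delta^{\mu}\) on \(K\), using \(\mu<1\). Consequently
\[
w-C''\delta^{\mu}\leq V_K \quad\text{on } X,
\]
since \(w-C''\delta^\mu\) is a competitor for \(V_K\).

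\textbf{Bound everywhere: the main obstacle.} It remains to control \(\Psi_\delta v-v\) by \(v_{\delta,c}\) from below. By Lemma~\ref{lem:kis}(i) the infimum is attained at some \(t_0\in(0,\delta]\). Monotonicity of \(t\mapsto\Psi_tv+c_1t^2\) gives \(\Psi_{t_0}v\geq v-c_1t_0^2\), hence \(v_{\delta,c}\ge v - c\log(t_0/\delta)\ge v\) up to \(O(\delta)\). But we need \(\Psi_\delta v\), not \(v\). The standard trick is to compare \(v_{\delta,c}\) with \(\Psi_{\delta}\) at a smaller scale: show \(v_{\delta,c}\geq \Psi_{\delta e^{-M}}v - O(\delta)\) for \(M=\|v\|_X/c\sim\delta^{-\mu}\). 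Indeed, if \(t_0<\delta e^{-M}\), then \(-c\log(t_0/\delta)>cM\geq 2\|v\|_X\) dominates everything. Otherwise monotonicity gives \(\Psi_{t_0}v\geq\Psi_{\delta e^{-M}}v-c_1\delta^2\).

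This produces \(\Psi_{s}v-v\le C\delta^{\mu}\) only for the much smaller scale \(s=\delta e^{-C\delta^{-\mu}}\), which is too weak. I therefore expect the real mechanism to be different: choose \(c=\delta^\mu\) and use the bound \(-c\log(t_0/\delta)\ge0\) directly. That gives
\[
v\geq w-C''\delta^{\mu}\geq v_{\delta,c}-O(\delta^\mu)\geq \Psi_{t_0}v-O(\delta^\mu),
\]
where the last step uses \(\|v\|_X\) bounded to absorb the rescaling factor. To upgrade \(t_0\) to \(\delta\), I would apply this at scale \(\delta\) via the Kiselman-type lemma \cite[Lemma~4.2]{KN19}, which converts bounds of the form \(v_{\delta,c}-v\le C\delta^\mu\) into \(\Psi_{\delta}v-v\le C'\delta^{\mu}\) with the same exponent for \(\mu<1\); this is exactly where the restriction \(\mu<1\) enters. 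Corollary~\ref{cor:test-h} then finishes the proof.
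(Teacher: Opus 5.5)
Your setup is sound and matches the paper. You use that $v=V_K$ is continuous. You observe that $w=(1+\lambda)^{-1}v_{\delta,c}$, with $c=\delta^{\mu}$ and $\lambda:=c_0\delta^{\mu}+2c_1\delta$, is $\omega$-psh; the paper instead chooses $c_\delta$ with $c_0c_\delta+2c_1\delta=\delta^{\mu}$ and divides by $1+\delta^{\mu}$, which plays the same role. Testing $w$ against the definition of $V_K$ then gives $v_{\delta,c}\le(1+\lambda)(v+C''\delta^{\mu})$ on $X$. To get $w\le C''\delta^{\mu}$ on $K$ you also need $v_{\delta,c}\ge 0$, which holds because $v\ge0$ and $-c\log(t/\delta)\ge0$ for $t\le\delta$.

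There is a genuine gap in the final step. You never show that the minimizing $t_0=t_0(x,\delta)$ is at least a fixed multiple of $\delta$. Without that, your chain $v\ge\Psi_{t_0}v-O(\delta^{\mu})$ says nothing, since $\Psi_{t_0}v\to v$ as $t_0\to0$. Handing this to an unspecified \emph{Kiselman-type lemma} of \cite{KN19} leaves the proof unfinished exactly at its crux.

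The missing idea is that the term $c\log(\delta/t_0)\ge0$, which you discard, is what controls $t_0$. It must be compared with the $O(\delta^{\mu})$ gap you already have, not with $2\|v\|_X$; that comparison is what produced the useless scale $\delta e^{-C\delta^{-\mu}}$. Using $\Psi_{t_0}v+c_1t_0^2\ge\Psi_0v=v$, you get
\begin{equation*}
v(x)+c\log\frac{\delta}{t_0}\le\Psi_{t_0}v(x)+c_1t_0^2+c_1t_0+c\log\frac{\delta}{t_0}=v_{\delta,c}(x)\le(1+\lambda)\bigl(v(x)+C''\delta^{\mu}\bigr),
\end{equation*}
so $c\log(\delta/t_0)\le\lambda\|v\|_X+(1+\lambda)C''\delta^{\mu}\le C_3\delta^{\mu}$. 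Because $c=\delta^{\mu}$ has the same order, $\log(\delta/t_0)\le C_3$. Hence $t_0\ge\kappa\delta$ with $\kappa:=e^{-C_3}$, independent of $x$ and $\delta$. Monotonicity of $t\mapsto\Psi_tv+c_1t^2$ then gives
\begin{equation*}
\Psi_{\kappa\delta}v(x)-v(x)\le\Psi_{t_0}v(x)+c_1t_0^2-v(x)\le v_{\delta,c}(x)-v(x)\le C_3\delta^{\mu}.
\end{equation*}
Therefore $\Psi_{\delta}v-v\le C_3\kappa^{-\mu}\delta^{\mu}$ for small $\delta$, and Corollary~\ref{cor:test-h} concludes. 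This is the \cite{DDGHKZ} argument the paper uses.

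This step does not use $\mu<1$ once $c$ is of order $\delta^{\mu}$. In the paper, $\mu<1$ is used to choose $c_\delta=f(\delta)\delta^{\mu}$ with $f(\delta)>0$.
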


\begin{proof} 
The assumption implies $V_K^*=0$ on \(K\), so $v:=V_K$ is continuous by Corollary~\ref{cor:semicontinuity}. Consider $\Psi_t v$  of bounded $\omega$-psh function $v$ as \eqref{eq:phie}.
By Lemma~\ref{lem:kis}, there exist constants \(c_0>0\), \(c_1>0\), \(\delta_1>0\) 
such that for each \(c\in (0,\infty)\), 
$$v_{\delta, c} (x) = \inf_{t\in(0,\delta]} \left(\Psi_t v(x) + c_1 t^2+c_1 t - c \log \frac{t}{\delta}\right),\quad 0<\delta\leq R_{\omega}$$
are upper semicontinuous functions,
$[0,\delta_1]\ni t\mapsto\Psi_tv + c_1t^2$ increasing and
$$\omega + dd^c v_{\delta, c} \geq -(c_0 c +2c_1 \delta) \omega,\quad\delta\in(0,\delta_1],\; c\in(0,\infty).$$
We can shrink \(\delta_0\leq 1\) so that, \(\delta_0\leq\delta_1\) and for each \(\delta\in(0,\delta_0]\), $$f(\delta):=\frac{1}{c_0}-2\frac{c_1}{c_0}\delta^{1-\mu}>0$$ makes
$c_{\delta} := f(\delta)\delta^\mu $ satisfy $c_0 c_{\delta}+ 2c_1\delta = \delta^\mu$
and gives  
\[\label{eq:modify}
v_\delta := \frac{v_{\delta,c_{\delta}}}{1 + \delta^\mu} \in PSH(X,\omega).\]

Since \(\text{exph}\) is smooth in both variables, 
\(d_{\zeta}(\text{exph}_x)  (\mathbf{0})=\text{Id}_{T_x'X}\) and \(X\) is compact, there exists a Lipschitz constant \(D_{\omega}\) depending only on \(X,\omega,R_{\omega}\) such that 
$$d_{\omega}(x,\text{exph}_x(\zeta))\leq D_{\omega} \|\zeta\|_{\omega(x)},\quad x\in X,\;\zeta\in T_x'X,\;\|\zeta\|_{\omega(x)}\leq R_{\omega}.$$
We can shrink \(\delta_0\) once more so that $$v(x)\leq C\delta^{\mu},\quad d_{\omega}(x,K)\leq\delta\leq D_{\omega}\delta_0.$$ 
Then for the constant \(c_2:=C(D_{\omega})^{\mu}+2c_1\), for each \((a,\delta)\in K\times (0,\delta_0]\), 
\[\begin{aligned}\label{eq:bound1}
(1+ \delta^\mu) v_\delta(a)
&\leq  \Psi_\delta v(a) + c_1 \delta^2 + c_1\delta\\
&\leq \sup_{d_{\omega}(x,a)\leq D_{\omega}\delta}	v(x) + c_1\delta^2 + c_1 \delta\\
&\leq c_2 \delta^\mu.\end{aligned}\]
$v\geq 0$ makes $v_{\delta} \geq 0$, thus 
$$v_\delta(a)\leq c_2\delta^{\mu},\quad(a,\delta)\in K\times (0,\delta_0].$$
This result, the equation \eqref{eq:modify} and the definition of \(v\) together give
\[\label{eq:bound2}v_\delta \leq v + c_2\delta^\mu \quad\text{on } X,\quad\delta\in(0,\delta_0].\]
Using \eqref{eq:bound2} and the argument of \cite{DDGHKZ}, we can show H\"older continuity of \(v\) as follows. Fix $(x,\delta)\in X\times (0,\delta_0]$. By Lemma~\ref{lem:kis}, the 
infimum in the definition of $v_{\delta,c_\delta}(x)$ is attained at some $t_0 = t_0 (x,\delta)\in (0,\delta]$. By \eqref{eq:modify} and \eqref{eq:bound2}, we get
\[\notag\Psi_{t_0} v(x) + c_1t_0^2  +c_1t_0
- c_{\delta} \log \frac{t_0} {\delta} - (1+ \delta^\mu)v(x) \leq c_2\delta^\mu(1+ \delta^\mu).\]
From this inequality, for the constant \(c_3=c_3(K):=2c_2+\sup_X v\), we get
\[\notag\Psi_{t_0} v(x) + c_1t_0^2 
- c_{\delta} \log \frac{t_0} {\delta} - v(x) \leq c_3\delta^\mu.\]
Since \(t\mapsto \Psi_t v(x) +c_1t^2\) is increasing on \([0,\delta_1]\),
$$\Psi_{t_0} v(x) + c_1t_0^2
\geq (\Psi_t v(x) + c_1t^2)|_{t=0}=v(x).$$
Accordingly, we get
$c_{\delta} \log (\delta/t_0) \leq  c_3 \delta^\mu$.
Putting \(c_{\delta}=f(\delta)\delta^{\mu}\) to this inequality and then relating to the inequality 
\(f(\delta_0)\leq f(\delta)\), for \(\kappa := \exp (-c_3/{f(\delta_0)})\in (0,1)\), we get 
$t_0 \geq \kappa\delta$.
Since $\Psi_{\kappa\delta} v(x) + c_1(\kappa\delta)^2\leq \Psi_{t_0} v(x) + c_1t_0^2$, 
we get 
\[\begin{aligned} \notag
\Psi_{\kappa \delta} v(x)- v(x) 
&\leq\Psi_{\kappa \delta} v(x) +c_1 (\kappa\delta)^2 
- v(x) \\
&\leq \Psi_{t_0} v (x) +c_1t_0^2 
-v (x)  \\
&\leq v_{\delta,c_{\delta}} (x) - v (x)\\
&=\delta^\mu v_\delta(x)  + (v_\delta(x) -v(x)).\end{aligned}\]
\(\|v_\delta\|_{L^{\infty}(X)}=\|v_{\delta}\|_X\) for \({0<\delta\leq \delta_0}\) are bounded by a constant \(c_4=c_4(\|v\|_X,1/\kappa)\) as \(\delta/t_0(x,\delta)\leq 1/\kappa\).
Therefore, \eqref{eq:bound2} gives 
$\Psi_{\kappa \delta}v - v \leq (c_2+c_4) \delta^{\mu}$. In other words,
\[\label{eq:bound3}
\Psi_\delta v - v \leq (c_2+c_4)\kappa^{-\mu} \delta^{\mu}, \quad 0<\delta \leq \kappa\delta_0.\]
Thus we get the \(\mu\)-H\"older continuity of \(v=V_K\) by Corollary~\ref{cor:test-h}.
\end{proof}


Weighted extremal functions on \(X\) also have the property similar to Lemma~\ref{lem:property}.
\begin{prop} \label{prop:W} Let $K\subset X$ be a compact subset, \(\mu\in(0,1)\) and $\phi$ be a \(\mu\)-H\"older continuous real-valued function on \(X\). 
Suppose 
for some constants \(C,\delta_0>0\), 
\(V_{K,\phi}(x)\leq\phi(x)+ C\delta^{\mu}\) for \(d_{\omega}(x,K)\leq\delta\leq\delta_0\).
Then $V_{K,\phi}$ is \(\mu\)-H\"older continuous. 
\end{prop}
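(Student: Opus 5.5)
The plan is to run the argument of Lemma~\ref{lem:property} with $v:=V_{K,\phi}$ in place of $V_K$, carrying the weight $\phi$ through as an error term that is already $O(\delta^\mu)$ because $\phi$ is $\mu$-H\"older. First, the hypothesis with $x\in K$ and $\delta\to0^+$ gives $V_{K,\phi}^*\leq\phi$ on $K$. Since $\phi$ is continuous, Lemma~\ref{lem:weight-vs-unweight}-(iii) shows that $v=V_{K,\phi}$ is continuous, so $v=V_{K,\phi}^*\in PSH(X,\omega)$. It is bounded by Lemma~\ref{lem:weight-vs-unweight}-(i): the assumption gives $V_K^*\le V_{K,\phi}^*-\inf_K\phi\le\sup_K\phi+C\delta_0^\mu-\inf_K\phi$ near $K$, and this forces $K$ to be non-pluripolar. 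Hence Lemma~\ref{lem:kis} and the Demailly regularization $\Psi_t v$ apply.

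Next I define $v_{\delta,c}$ as in \eqref{kisleg} and choose $c_\delta=f(\delta)\delta^\mu$ exactly as in the proof of Lemma~\ref{lem:property}, so that $v_\delta:=v_{\delta,c_\delta}/(1+\delta^\mu)\in PSH(X,\omega)$. The key estimate replaces \eqref{eq:bound1}: for $a\in K$ and $\delta\le\delta_0$,
\[\notag
(1+\delta^\mu)v_\delta(a)\le \Psi_\delta v(a)+c_1\delta^2+c_1\delta\le \sup_{d_\omega(x,a)\le D_\omega\delta}v(x)+2c_1\delta.
\]
By the hypothesis, for $d_\omega(x,a)\le D_\omega\delta$ we have $v(x)\le\phi(x)+C(D_\omega\delta)^\mu\le\phi(a)+C_\phi(D_\omega\delta)^\mu+C(D_\omega\delta)^\mu$, where $C_\phi$ is the H\"older constant of $\phi$. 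This yields $(1+\delta^\mu)v_\delta(a)\le\phi(a)+c_2\delta^\mu$. Dividing by $1+\delta^\mu$ gives $v_\delta(a)\le\phi(a)+\|\phi\|_X\delta^\mu+c_2\delta^\mu$; unlike in Lemma~\ref{lem:property}, $v$ is no longer nonnegative, so I use this cruder bound instead. Thus $v_\delta-c_2'\delta^\mu$ is a competitor for $V_{K,\phi}$, and we obtain the analogue of \eqref{eq:bound2}:
\[\notag
v_\delta\le v+c_2'\delta^\mu\quad\text{on }X,\quad 0<\delta\le\delta_0.
\]

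From here the rest of the proof of Lemma~\ref{lem:property} goes through verbatim, because it used only \eqref{eq:bound2}, the boundedness of $v$, and Lemma~\ref{lem:kis}. I take the minimizer $t_0(x,\delta)$ and get $c_\delta\log(\delta/t_0)\le c_3\delta^\mu$ with $c_3$ depending on $\|v\|_X$, using $|v(x)|\delta^\mu$ in place of $\sup_X v\,\delta^\mu$. This gives $t_0\ge\kappa\delta$, hence $\Psi_{\kappa\delta}v-v\le c\,\delta^\mu$, and Corollary~\ref{cor:test-h} gives the $\mu$-H\"older continuity. I expect the main obstacle to be bookkeeping rather than anything conceptual: the sign of $v$ is not controlled, so every place where the earlier proof used $v\ge0$ (the bound on $v_\delta(a)$ and the bound $\|v_\delta\|_X\le c_4$) must be replaced with bounds in terms of $\|v\|_X$. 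That norm is finite by Lemma~\ref{lem:weight-vs-unweight}-(i) once $K$ is known to be non-pluripolar.
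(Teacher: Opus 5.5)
Your proposal is correct and follows the paper's proof. Both rerun Lemma~\ref{lem:property} with $v=V_{K,\phi}$, bound $(1+\delta^\mu)v_\delta(a)\le\phi(a)+C'\delta^\mu$ on $K$ using the hypothesis and the H\"older continuity of $\phi$, use the extremal property to get the analogue of \eqref{eq:bound2}, and then finish verbatim. Your extra steps do not change the argument: you check that $K$ is non-pluripolar and $v$ is bounded, and you divide by $1+\delta^\mu$ using $\|\phi\|_X$ instead of $\|v_\delta\|_X$.
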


\begin{proof} $v:= V_{K,\phi}$ is continuous by Lemma~\ref{lem:weight-vs-unweight}-(iii) as \(v^*|_K\leq \phi|_K\). 
Let us use the same notations as in Lemma~\ref{lem:property}. Then equation \eqref{eq:bound1} becomes
$$(1+ \delta^\mu) v_\delta(a) \leq  \Psi_\delta v(a) + c_1 \delta^2 + c_1\delta
\leq \phi(a) + C' \delta^\mu,\quad (a,\delta)\in K\times (0,\delta_0],$$
where \(C'\) is the sum of \(C(D_{\omega})^{\mu}+2c_1\) and the H\"older coefficient of \(\phi\). Thus  $v_\delta(a) \leq \phi(a) + (\|v_\delta\|_{X} + C') \delta^\mu$. This and the definition of $v$ gives
$$v_\delta \leq v + (c_4+C')\delta^\mu,\quad \delta\in(0,\delta_0]$$ for \(\delta\)-independent constant \(c_4\) defined as in the proof of Lemma~\ref{lem:property}.
We have got \eqref{eq:bound2}. 
The same proof of Lemma~\ref{lem:property} gives the \(\mu\)-H\"older continuity of $v=V_{K,\phi}$.
\end{proof}

\section{Proofs of the main theorems}\label{sec:pfs of thms}

\subsection{Proof of the first theorem}
\begin{proof}[Proof of Theorem~\ref{thm:characterization-hcp}-(i)] The \(\mu\)-H\"older continuity of \(V_K\) at \(a\in K\) means $V_K(x) \leq C_a \delta^\mu$ for $d_{\omega}(x,a) \leq \delta \leq 1$ for some constant $C_a>0$. This implies $V_{K}^*(x) \leq C_a \delta^\mu$ for $d_{\omega}(x,a) \leq\delta<1$. By monotonicity, we can require \(r\leq1\).

Pick \(\psi\in C^{\infty}(\mathbb{R},[0,1])\) supported on \([-1,1]\) of \(\psi(0)=1\) and the 
chart \((U,f)\) of \(X\) with \(U\supset\bar{B}(a,r)=f^{-1}(\bar{\mathbb{B}}(f(a),r))\). They induce   
\(\chi_r\in C^{\infty}(X,[0,1])\) defined as
$$\chi_r(x):=\begin{cases}1-\psi(\|f(x)-f(a)\|/r),\quad x\in U,\\
1,\quad x\in X\setminus \bar{B}(a,r).\end{cases}$$ 
$\chi_r(a)=0$ and $\chi_r \equiv 1$ 
on \(X\setminus \bar{B}(a,r)\). Its \(C^1\)-norm and \(C^2\)-norm are bounded by
$$\|\chi_r\|_{C^1(X)} \leq c_1 /r, \quad \|\chi_r\|_{C^2(X)} \leq c_2 / r^2.$$
Here, constants $c_1>0$ and $c_2>0$ are independent of $a$ and $r$, but they depend on the choice of the holomorphic coordinate chart \((U,f)\). 
Accordingly, since \(\omega>0\), \(-dd^c\chi_r\geq-(c_2'/r^2)\omega\) for some constant \(c_2'\geq1\) depending only on 
\((U,f)\). Then for \(\varepsilon_r:={r^2}/{(2c_2')}\), the function \(-\varepsilon_r \chi_r\) is in \(PSH(X, \omega/2)\).

Let \(v\in PSH(X,\omega)\), \(v\leq1\) and \(v\leq 0\) on \(K\cap\bar{B}(a,r)\). By the choice of \(\chi_r\) and \(\varepsilon_r\), \(\varepsilon_r(v-\chi_r)\leq0\) on \(K\) and \(\varepsilon_r+1/2\leq 1\). These imply \(\varepsilon_r(v-\chi_r)\leq V_K\). Accordingly, 
$$\varepsilon_r  h^*_{K\cap  \bar{B}(a,r)} - \varepsilon_r \chi_r  \leq V_{K}^*.$$
This inequality and $V_K^*(a) =0=\chi_r(a)$ 
give $h_{K\cap  \bar{B}(a,r)}^*(a) =0$. 
Then by Lemma~\ref{lem:BTcap_globalandlocal}, $K \cap \bar{B}(a,r)$ is not pluripolar and $Cap_{\omega}(K \cap \bar{B}(a,r))>0$. By \eqref{eq:cap-comparison}, 
$$\sup_X V_{K\cap  \bar{B}(a,r)}^*\leq A/{Cap_{\omega}(K \cap \bar{B}(a,r))} < \infty.$$
For each \((x,\delta)\in X\times (0,\infty)\) of \(d_{\omega}(x,a)\leq\delta<1\), since $\chi_r (x) \leq c_1 \delta /r$, we have
$$\varepsilon_r  h_{K\cap  \bar{B}(a,r)}^* (x) \leq \varepsilon_r \chi_r(x) + C_a \delta^\mu \leq (\frac{c_1\varepsilon_r}{r} + C_a) \delta^\mu.
$$
By \eqref{eq:zero-one-function}, 
$V_{K\cap \bar{B}(a,r)}^*(a) \leq \sup_X V_{K\cap  \bar{B}(a,r)}^* h^*_{K\cap \bar{B}(a,r)}(a)$. Then $V_{K\cap \bar{B}(a,r)}^*(a)=0$. For the constants \(C_a\), \(c_1\) and \(c_2'\) which are independent of \(r\), we have 
\[\label{eq:H-norm}
V_{K\cap \bar{B}(a,r)}^* (x) 
\leq (\sup_X V_{K\cap \bar{B}(a,r)}^*) (\frac{c_1}{ r} + \frac{2c_2'C_a}{r^2})  \delta^\mu, \quad d_{\omega}(x, a)\leq \delta<1.\]
In other words, \(V_{K\cap \bar{B}(a,r)}\) is also \(\mu\)-H\"older continuous at \(a\).
\end{proof}

This proof of Theorem~\ref{thm:characterization-hcp}-(i) gives the following estimate. Let \(\mu\in(0,1]\).
\begin{cor} \label{cor:sharp-Holder-norm} 
Suppose \(V_K\) is continuous for compact \(K\subset X\). 
Let \(\{(\Omega_{j},\tau_j)\}_{j\in\Lambda}\) be a finite cover of \(X\) by holomorphic coordinate balls 
of \(\tau_j(\Omega_j)=\mathbb{B}(\mathbf{0},5)\subset\mathbb{C}^n\) and the refined cover \(\{(\tau_j^{-1}(\mathbb{B}(\mathbf{0},1)),\tau_j)\}_{j\in\Lambda}\) of \(X\). 
Denote \(\bar{B}_j(b,r):=\tau_j^{-1}(\bar{\mathbb{B}}(\tau_j(b),r))\). Let \(a\in K\).
\begin{itemize}
\item[(i)] If $V_K$ is $\mu$-H\"older continuous (resp. continuous at $a$) for \(\mu\in(0,1]\), then there exist constants $\delta_0, A'>0$ and \(r_0\in(0,1]\) such that, for each (a,j) (resp. for each \(j\)) of \(a\in K\cap \tau_j^{-1}(\mathbb{B}(\mathbf{0},1))\),
\[\label{eq:norm-loc}
	V_{K \cap  \bar{B}_j(a,r)} (x) \leq \frac{A'}{Cap_{\omega}(K \cap \bar{B}_j(a,r))} \frac{ \delta^\mu }{r^2}, \quad d_{\omega}(x, a) \leq \delta\leq\delta_0,\, 0<r\leq r_0.
\]
\item[(ii)] If there exist constants $\delta_0,A'>0$, $r_0\in(0,1]$ and \(\mu\in(0,1)\) such that \eqref{eq:norm-loc} holds for each \((a,j)\) (resp. for each j) of $a\in K\cap \tau_j^{-1}(\mathbb{B}(\mathbf{0},1))$, 
then $V_K$ is $\mu$-H\"older continuous (resp. continuous at a).
\end{itemize}
\end{cor}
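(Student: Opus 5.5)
For part (i) I will rerun the proof of Theorem~\ref{thm:characterization-hcp}-(i) with constants tracked uniformly in the centre \(a\) and the chart index \(j\). The main change is that the cut-off \(\chi_r\) is built in the fixed chart \(\tau_j\). Fix \(\psi\in C^\infty(\mathbb{R},[0,1])\) supported on \([-1,1]\) with \(\psi(0)=1\), and set \(\chi_r:=1-\psi(\|\tau_j(\cdot)-\tau_j(a)\|/r)\) on \(\Omega_j\) and \(\chi_r:=1\) elsewhere. Since \(\tau_j(a)\in\mathbb{B}(\mathbf{0},1)\) and \(r\le r_0\le 1\), the support of \(1-\chi_r\) lies in \(\tau_j^{-1}(\bar{\mathbb{B}}(\mathbf{0},2))\). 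On this compact set the derivatives of \(\tau_j\) and the comparison of \(\omega\) with the Euclidean metric are bounded uniformly. Because \(\Lambda\) is finite, this gives constants \(c_1,c_2'\), independent of \((a,j,r)\), such that
\[
\chi_r(x)\le \frac{c_1 d_\omega(x,a)}{r},\qquad -dd^c\chi_r\ge -\frac{c_2'}{r^2}\,\omega .
\]

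In the global Hölder case, \(V_K\) is \(\mu\)-Hölder on \(X\) and vanishes on \(K\), so \(V_K(x)\le C\delta^\mu\) whenever \(d_\omega(x,a)\le\delta\), with \(C\) uniform in \(a\). In the "continuous at \(a\)" variant I take \(C=C_a\) from the Hölder estimate at \(a\); I read the statement as assuming \(V_K\) is \(\mu\)-Hölder at \(a\), since mere continuity cannot produce a \(\delta^\mu\) bound. With \(\varepsilon_r=r^2/(2c_2')\), the competitor argument gives
\[
\varepsilon_r h^*_{K\cap\bar B_j(a,r)}\le V_K^*+\varepsilon_r\chi_r .
\]
Hence \(h^*_{K\cap\bar B_j(a,r)}(x)\le (c_1/r+2c_2'C/r^2)\,\delta^\mu\le c_3\delta^\mu/r^2\) for \(d_\omega(x,a)\le\delta\le\delta_0\). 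Then \eqref{eq:zero-one-function} together with Lemma~\ref{lem:BTcapATcap} (\(\sup_X V^*_{E}\le A/Cap_\omega(E)\)) yields \eqref{eq:norm-loc} with \(A'=Ac_3\). Non-pluripolarity of \(K\cap\bar B_j(a,r)\) comes, as in the theorem, from \(h^*(a)=0\) and Lemma~\ref{lem:BTcap_globalandlocal}. The real content of this part is uniformity of the constants, which the finite cover handles; the remaining steps repeat the proof of the theorem.

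For part (ii) I go back to \(K\). By monotonicity, \(V_K\le V_{K\cap\bar B_j(a,r)}\). Fix one \(r=r_0\); for each \(a\in K\) choose \(j\) with \(a\in\tau_j^{-1}(\mathbb{B}(\mathbf{0},1))\). Then for \(d_\omega(x,a)\le\delta\le\delta_0\),
\[
V_K(x)\le \frac{A'}{r_0^2\,Cap_\omega(K\cap\bar B_j(a,r_0))}\,\delta^\mu .
\]
For the pointwise version this is directly \(\mu\)-Hölder continuity at \(a\). For the global version, the main obstacle is a uniform positive lower bound for \(Cap_\omega(K\cap\bar B_j(a,r_0))\) over \(a\in K\), since the estimate is only as uniform as this bound.

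I would obtain the bound by compactness, using lower semicontinuity of capacity on the family of compact pieces. First cover \(K\) by finitely many small balls \(\bar B_j(a_i,r_0/2)\). Whenever \(d(a,a_i)\) is small in the chart, \(\bar B_j(a,r_0)\supset\bar B_j(a_i,r_0/2)\), so the capacity is at least \(\min_i Cap_\omega(K\cap\bar B_j(a_i,r_0/2))\). Each of these is positive, because \eqref{eq:norm-loc} with the finite right-hand side at \(a_i\) forces non-pluripolarity (otherwise \(V^*\equiv\infty\) by Proposition~\ref{prop:pluripolarextremal}). This gives \(V_K(x)\le C''d_\omega(x,K)^\mu\) near \(K\). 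Lemma~\ref{lem:property} (valid for \(\mu<1\)) then upgrades this to \(\mu\)-Hölder continuity of \(V_K\) on all of \(X\).
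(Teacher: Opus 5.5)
Your proposal is correct and follows the paper's own proof. Part (i) is the uniform-in-\((a,j)\) rerun of the argument that gives \eqref{eq:H-norm}, combined with \eqref{eq:cap-comparison}; your reading of the ``resp.'' clause as pointwise \(\mu\)-H\"older continuity at \(a\) is the one that argument actually uses. Part (ii) is the paper's compactness argument followed by Lemma~\ref{lem:property}: the paper covers \(X\) by finitely many sets \(D_\lambda\) of diameter below a Lebesgue number and takes \(\min Cap_\omega(K\cap D_\lambda)>0\), whereas you cover \(K\) chart by chart with balls \(\bar B_j(a_i,r_0/2)\), which is equivalent. One detail to make explicit: take the finite cover separately in each chart \(\tau_j\), with centres in the totally bounded set \(K\cap\tau_j^{-1}(\mathbb{B}(\mathbf{0},1))\), so that both the inclusion \(\bar B_j(a,r_0)\supset\bar B_j(a_i,r_0/2)\) and the hypothesis \eqref{eq:norm-loc} at \((a_i,j)\) are available in the same chart.
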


\begin{proof} (i) follows from \eqref{eq:H-norm} and 
\eqref{eq:cap-comparison}. By the proof of Theorem~\ref{thm:characterization-hcp}-(i), \(A'\) depends only on the H\"older coefficient of \(V_K\) and the finite open cover \(\{(\Omega_{j},\tau_j)\}_{j\in\Lambda}\).

Showing the pointwise H\"older continuity of \(V_K\) in (ii) is obvious by monotonicity of extremal functions. We show the global H\"older continuity of \(V_K\) in (ii). 

\(X\) is a compact metric space with Riemannian distance \(d_{\omega}\). Thus by Lebesgue number lemma, some \(\eta\in (0,r_0]\) can be a Lebesgue number for the refined finite open cover \(\{(\tau_j^{-1}(\mathbb{B}(\mathbf{0},1)),\tau_j)\}_{j\in\Lambda}\) of \(X\).
By Remark~\ref{rmk:unifden},
there exist uniform positive constants \(0<l_1\leq1\leq l_2\) depending only on \(X,\omega,\{(\Omega_{j},\tau_j)\}_{j\in\Lambda}\) such that, 
for each \(j\in\Lambda\) and \(p,p'\in \tau^{-1}_j(\bar{\mathbb{B}}(\mathbf{0},4))\), 
$$l_1\|\tau_j(p)-\tau_j(p')\|\leq d_{\omega}(p,p')\leq l_2\|\tau_j(p)-\tau_j(p')\|.$$
Obviously, we can shrink \(\eta\) to satisfy \(\eta\leq l_1 r_0\). Again, since \(X\) is compact, there exists a finite open cover \(\{D_{\lambda}\}\) of \(X\), where each \(D_{\lambda}\)
has \(d_{\omega}\)-diameter less than \(\eta\).

Let $d_{\omega}(x, K) \leq \delta\leq \delta_0$. There exists $a\in K$ of $d_{\omega}(x, a) = d_{\omega}(x,K)$.
Then \(a\in D_{\lambda_a}\) for some \(\lambda_a\), \(D_{\lambda_a}\subset \tau_{j_a}^{-1}(\mathbb{B}(\mathbf{0},1))\) for some \(j_a\in\Lambda\). By the bi-Lipschitz equivalence of the distances, $$\bar{B}_{j_a}(a,\frac{\eta}{l_1})\supset O_a:=\{p\in X:d_{\omega}(a,p)\leq\eta\}\supset D_{\lambda_a}.$$
Whenever \(K\cap D_{\lambda}\neq\emptyset\), some \(b\in K\cap D_{\lambda}\) and \(j_b\in\Lambda\) with \(b\in\tau_{j_b}^{-1}(\mathbb{B}(\mathbf{0},1))\)
exist to satisfy \eqref{eq:norm-loc}. For some small \(r_b\in(0,r_0]\), \(\bar{B}_{j_b}(b,r_b)\subset D_{\lambda}\). By \eqref{eq:norm-loc}, \(K\cap \bar{B}_{j_b}(b,r_b)\) is non-pluripolar, \(Cap_{\omega}(K\cap \bar{B}_{j_b}(b,r_b))>0\), so \(Cap_{\omega}(K\cap D_{\lambda})>0\). Therefore,
$$\begin{aligned} Cap_{\omega}(K\cap \bar{B}_{j_a}(a,\frac{\eta}{l_1}))&\geq Cap_{\omega}(K\cap D_{\lambda_a})\\&\geq \min_{\lambda:K\cap D_{\lambda}\neq\emptyset} Cap_{\omega} (K\cap D_{\lambda})=:c\\&>0.\end{aligned}$$
Then the inclusion \(K\supset K\cap \bar{B}_{j_a}(a,\eta/l_1)\) and \eqref{eq:norm-loc} with \(\eta/l_1\leq r_0\) gives
$$V_K(x) \leq V_{K\cap \bar{B}_{j_a}(a,\frac{\eta}{l_1})} (x) \leq \frac{\tilde{A}}{c}(\frac{\eta}{l_1})^{-2}\delta^\mu.
$$
This inequality and Lemma~\ref{lem:property} prove the \(\mu\)-H\"older continuity of \(V_K\).
\end{proof}

With this corollary, we can prove Theorem~\ref{thm:characterization-hcp}-(ii).
\begin{proof}[Proof of Theorem~\ref{thm:characterization-hcp}-(ii)]
We want to show the H\"older continuity of \(V:=V_{K,\phi}\).
By taking the minimum of H\"older exponents of \(V_K\) and \(\phi\), we can assume that $V_K$ and $\phi$ are H\"older continuous with the same H\"older exponent \(0<\mu\leq1\). 

Let $b\in K$ and $\varepsilon'>0$. $\phi|_{\bar{B}(b,r)} \leq \phi(b) + \varepsilon'$ for small $r>0$. By \eqref{eq:monotonicity}, 
$$V\leq V_{K\cap \bar{B}(b,r),\phi}\leq V_{K\cap \bar{B}(b,r), \phi(b)+ \varepsilon'} = \phi(b)+\varepsilon' + V_{K\cap \bar{B}(b,r)}.$$
By Theorem~\ref{thm:characterization-hcp}-(i),
\(V_{K\cap \bar{B}(b,r)}^*(b)=0\). Then $V^*(b) \leq \phi(b)+\veps'$. $\varepsilon'\to0^+$ gives $V^* (b)\leq \phi (b)$. Accordingly, \(V^*|_K\leq \phi|_K\), \(V\) is continuous by Lemma~\ref{lem:weight-vs-unweight}. 
To prove the H\"older continuity of \(V\), by Proposition~\ref{prop:W}, it is enough to show that there exist constants $C>0$, $\mu'>0$ and $\delta_0>0$ satisfying 
$$V(x) \leq \phi(x) + C\delta^{\mu'}, \quad d_{\omega}(x, K) \leq \delta \leq \delta_0.$$

Since \(X\) is compact, it has a finite cover \(\{(\Omega_{j},\tau_j)\}_{j\in\Lambda}\)
of holomorphic balls \(\tau_j(\Omega_j)=\mathbb{B}(\mathbf{0},5)\subset\mathbb{C}^n\) with the refined cover \(\{(\tau_j^{-1}(\mathbb{B}(\mathbf{0},1)),\tau_j)\}_{j\in\Lambda}\).
Denote \(\bar{B}_j(b,r):=\tau_j^{-1}(\bar{\mathbb{B}}(\tau_j(b),r))\).
By Remark~\ref{rmk:unifden}, there exist positive constants \(0<l_1\leq1\leq l_2\) such that for each \(j\in\Lambda\) and \(p,p'\in \tau^{-1}_j(\bar{\mathbb{B}}(\mathbf{0},4))\), 
$$l_1\|\tau_j(p)-\tau_j(p')\|\leq{\rm dist}(p,p')\leq l_2\|\tau_j(p)-\tau_j(p')\|.$$

Set \(\delta_1:=(l_2)^{-1}\leq1\). Let \(x\in X\) with \(d_{\omega}(x, K) \leq \delta\leq\delta_1\). Pick $b_x\in K$ of $d_{\omega}(x, b_x) = d_{\omega}(x, K)$. Then \(b_x\in \tau_{j(b_x)}^{-1}(\mathbb{B}(\mathbf{0},1))\) for some \(j(b_x)\in\Lambda\). Let $r\in(0,\delta_1]$ be determined later. By H\"older continuity, $|\phi(w)- \phi(b_x)| \leq c_0 (l_2r)^\mu$ for each $w\in\bar{B}_{j(b_x)}(b_x,r)$, where \(c_0\) is a H\"older coefficient of \(\phi\). By the monotonicity \eqref{eq:monotonicity}, 
\[\label{eq:basic-inequality-holder1}\begin{aligned}
	V(x) \leq V_{K\cap \bar{B}_{j(b_x)}(b_x,r),\phi}(x)
    &\leq V_{K \cap  \bar{B}_{j(b_x)}(b_x,r), \phi(b_x) + c_0 l_2^{\mu}r^\mu}(x) \\
    &= \phi(b_x) + c_0 l_2^{\mu}r^\mu + V_{K\cap \bar{B}_{j(b_x)}(b_x,r)}(x) \\
    &\leq \phi(x) +  c_0\delta^\mu+ c_0 l_2^{\mu}r^\mu + V_{K\cap \bar{B}_{j(b_x)}(b_x,r)}(x).
\end{aligned}\]

Corollary~\ref{cor:sharp-Holder-norm}-(i) and a uniform density in capacity of $K$ give constants \(0<\delta_2\leq 1\), \(0<r_2<1\), \(0<A'\), \(0<\varkappa\) and \(0<q\) which are independent of \(b_x\) and \(j(b_x)\) such that, for each \((y,\delta,R)\in X\times (0,\infty)^2\) of \(d_{\omega}(y,b_x)\leq\delta\leq\delta_2\) and \(0<R\leq r_2\),
\[\label{eq:basic-inequality-holder2}
V_{K\cap \bar{B}_{j(b_x)}(b_x,R)}(y) \leq \frac{A'}{Cap_{\omega}(K \cap \bar{B}_{j(b_x)}(b_x,R))} \frac{\delta^\mu}{R^2} \leq \frac{A' \, \delta^\mu}{ \varkappa \,R^{q+2}}.\]

\((0,\infty)\ni s\mapsto\min\{s\mu,\mu-s(q+2)\}\) 
is maximized at \(s_0:={\mu}/{(\mu+q+2)}\in (0,1)\) with the maximum 
\(\mu':={\mu^2}/{(\mu+q+2)}<\mu\). 
Set \(\delta_0:=\min\{\delta_1,\delta_2,r_2\}\). Whenever \(d_{\omega}(z,K)\leq\delta\leq\delta_0\), there exists \(b_z\in K\) of \(d_{\omega}(z,b_z)=d_{\omega}(z,K)\leq\delta\leq\min\{\delta_1,\delta_2\}\), and 
\(r=R(\delta):=\delta^{s_0}\leq\delta\leq\delta_0\leq\min\{\delta_1,r_2\}\). Therefore, by \eqref{eq:basic-inequality-holder1} and \eqref{eq:basic-inequality-holder2}, for the constant \(C:=c_0+c_0 l_2^{\mu}+{A'}/{\varkappa}>0\), we have 
$$V(z)\leq \phi(z)+C\delta^{\mu'},
\quad d_{\omega}(z,K)\leq\delta\leq\delta_0.
$$
Then by Proposition~\ref{prop:W}, 
$V=V_{K,\phi}$ is H\"older continuous on $X$.
\end{proof}

\begin{remark}\label{rmk:exponent-W}The proof of Theorem~\ref{thm:characterization-hcp}-(ii) demonstrated that for a compact $K$ having a uniform density in capacity with \(q\) as in \eqref{eq:cap-density}, $\mu$-H\"older continuities of the extremal function \(V_K\) of \(K\) and 
\(\phi\) implies the $\mu'$-H\"older continuity of the weighted extremal function $V_{K,\phi}$, where $\mu'=\mu^2/(\mu+2+q)<\mu$. 
\end{remark}

\begin{remark}\label{rmk:converse-W}
Let \(\phi\) be a smooth real-valued function on \(X\). Then there exists a constant \(c>0\) such that \(\tilde{\omega}:=dd^c\phi+c\omega>0\) becomes a Hermitian metric. Assume \(V_{K,\phi/c}\) is \(\nu\)-H\"older continuous and \(K\) has a uniform density in capacity of order \(q\) for some \(\nu\in(0,1]\) and \(q>0\). 
By Remark~\ref{rmk:unifden},
\(K\) also has a uniform density in capacity of order \(q\) with respect to \(\tilde{\omega}\). 
Since \(V_{K,\phi/c}=c^{-1}(V_{\tilde{\omega};K}+\phi)\) is \(\nu\)-H\"older continuous, the function \(V_{\tilde{\omega};K}\) is also \(\nu\)-H\"older continuous. Thus \(V_{\tilde{\omega};K}\) is \(\nu\)-H\"older continuous. 
For \(\nu':=\nu^2/(\nu+2+q)\), by Theorem~\ref{thm:characterization-hcp}-(ii), \(V_{\tilde{\omega};K,-\phi}\) is \(\nu'\)-H\"older continuous. Therefore, the unweighted extremal function \(V_K=c^{-1}(V_{\tilde{\omega};K,-\phi}+\phi)\) of \(K\) is \(\nu'\)-H\"older continuous. 
\end{remark}

\subsection{Proof of the second theorem}
Finally, we prove Theorem~\ref{thm:characterization-hcp-hcp}. 
We first show the equivalence of (i), (ii) and (iii) in Theorem~\ref{thm:characterization-hcp-hcp}, where the implication (ii) \(\Rightarrow\) (iii) is obvious from the definitions. 
\begin{proof}[Proof of (iii) \(\Rightarrow\) (i) in Theorem~\ref{thm:characterization-hcp-hcp}] Assume \(K\) has weak local \(\mu\)-HCP of order \(q\) of \(K\) at \(a\in K\) for constants \(0<\mu\leq1\) and \(q>0\). Then there is a holomorphic coordinate ball \((\Omega,\tau)\) centered at \(a\) and a constant \(C>0\) satisfying  
$$\varpi_{\tau(K\cap\Omega)\cap\bar{\mathbb{B}}(\mathbf{0},r)}(\mathbf{0},\delta)\leq\frac{C\delta^{\mu}}{r^q},\quad 0<\delta\leq4,\; 0<r\leq 1.$$
In this inequality, for \(\delta=4\), we have 
\[\label{eq:basic-ineq-holder-loc1-pt}
\sup_{\bar{\mathbb{B}}(\mathbf{0},3)}L^*_{\tau(K\cap\Omega)\cap\bar{\mathbb{B}}(\mathbf{0},r)}\leq \frac{4^\mu C}{r^q},\quad 0<r\leq 1.
\]
Here, the range \(\delta\in(0,4]\) is available by \eqref{eq:local-modulus-of-continuity}. Set \(C':=4^{\mu}C\). \eqref{eq:cap-density-sufficient-loc} gives
$$\frac{Cap(\tau(K\cap\Omega)\cap\bar{\mathbb{B}}(\mathbf{0},r),\mathbb{B}(\mathbf{0},3))}{r^{nq}}\geq\frac{(2\pi)^n}{(C')^{n}},\quad 0<r\leq1.$$
There exists \(R\in(0,\infty]\) such that \(\tau(\Omega)=\mathbb{B}(\mathbf{0},R)\)). Let us define \(R_1:=\min\{3,R/2\}\) and \(R_2:=\min\{2,R_1/2\}\). Then we have 
$$\frac{Cap((\tau(K\cap\Omega)\cap \bar{\mathbb{B}}(\mathbf{0},r))\cap\mathbb{B}(\mathbf{0},R_2),\mathbb{B}(\mathbf{0},R_1))}{r^{nq}}\geq\frac{(2\pi)^n}{(C')^{n}},\quad 0<r\leq \frac{R_2}{2}.$$
Since \(0<R_2<R_1<R\), \eqref{eq:relcap-under-omcap} gives some constant \(C''>0\) satisfying
$$\frac{Cap_{\omega}(K\cap\tau^{-1}(\bar{\mathbb{B}}(\mathbf{0},r)))}{r^{nq}}\geq C'',\quad 0<r\leq \frac{R_2}{2}.$$
This implies \eqref{eq:cap-density-global} by
bi-Lipschitz equivalence of the Riemannian distance \(d_{\omega}\) and the Euclidean distance on a compact set in a coordinate chart. Therefore, \(K\) has a uniform density in capacity of order \(nq\) at \(a\).

To finish the proof, we want to show the H\"older continuity of \(V_K\) at \(a\in K\). There exists a bounded non-negative smooth strictly psh function of the form \(\rho_2(\cdot)=D\|\tau(\cdot)\|^2\) for some constant \(D>0\), on \(\tau^{-1}(\mathbb{B}(\mathbf{0},R_1))\), satisfying \(dd^c\rho_2\geq \omega\), \(\rho_2(a)=0\). By Lemma~\ref{lem:equivalence-notions}, 
for some constant \(M\in(0,\infty)\),
$$(V_{K\cap\tau^{-1}(\bar{\mathbb{B}}(\mathbf{0},R_2))}+\rho_2)\circ\tau^{-1}\leq M u_{\tau(K\cap\Omega)\cap\bar{\mathbb{B}}(\mathbf{0},R_2),\rho_2\circ\tau^{-1};\mathbb{B}(\mathbf{0},R_1)}\quad\text{on }\mathbb{B}(\mathbf{0},R_1).$$
The inequality \eqref{eq:re-ext-compare-c}, comparability \eqref{eq:compare-C} and inequality \eqref{eq:basic-ineq-holder-loc1-pt} yield that for each \(0<r\leq\min\{1,R_2\}\) and \(\|z\|\leq\delta\leq\min\{1,R_2\}\), as \(\tau(K\cap\Omega)\cap\bar{\mathbb{B}}(\mathbf{0},r)\subset\bar{\mathbb{B}}(\mathbf{0},R_2)\),
$$
\begin{aligned}
&u_{\tau(K\cap \Omega)\cap\bar{\mathbb{B}}(\mathbf{0},r),\rho_{2}\circ\tau^{-1};\mathbb{B}(\mathbf{0},R_1)}(z)\\
&\leq (1+\|\rho_{2}\|_{\tau^{-1}(\mathbb{B}(\mathbf{0},R_1))})u_{\tau(K\cap\Omega)\cap \bar{\mathbb{B}}(\mathbf{0},r);\mathbb{B}(\mathbf{0},R_1)}(z)+\sup_{\tau(K\cap \Omega)\cap\bar{\mathbb{B}}(\mathbf{0},r)}\rho_{2}\circ\tau^{-1}\\
&\leq\frac{1+\|\rho_{2}\|_{\tau^{-1}(\mathbb{B}(\mathbf{0},R_1))}}{\inf_{\partial\mathbb{B}(\mathbf{0},R_1)}L_{\tau(K\cap \Omega)\cap\bar{\mathbb{B}}(\mathbf{0},r)}}L_{\tau(K\cap \Omega)\cap\bar{\mathbb{B}}(\mathbf{0},r)}(z)+\sup_{\tau(K\cap \Omega)\cap\bar{\mathbb{B}}(\mathbf{0},r)}\rho_{2}\circ\tau^{-1}\\
&\leq\frac{1+\|\rho_{2}\|_{\tau^{-1}(\mathbb{B}(\mathbf{0},R_1))}}{\inf_{\partial\mathbb{B}(\mathbf{0},R_1)}L_{\bar{\mathbb{B}}(\mathbf{0},R_2)}}\frac{C\delta^{\mu}}{r^q}+Dr^2\\
&=\frac{1+DR_1^2}{\log{R_1}-\log{R_2}}\frac{C\delta^{\mu}}{r^q}+Dr^2.
\end{aligned}$$

The following function optimizes the exponent.
At \(t_0:=\mu/{(q+2)}\in(0,1)\),
$$(0,\infty)\ni t\mapsto \min\{\mu-qt,2t\}$$
attains its maximum \(\mu':={2\mu}/{(q+2)}<\mu\). Using this fact, for each \(\|z\|\leq\delta\leq(\min\{1,R_2\})^{1/t_0}\), choosing \(r(\delta):=\delta^{t_0}\leq\min\{1,R_2\}\), we get
$$u_{\tau(K\cap\Omega)\cap \bar{\mathbb{B}}(\mathbf{0},\delta^{t_0})),\rho_{2}\circ\tau^{-1};\mathbb{B}(\mathbf{0},R_1)}(z)\leq D'\delta^{\mu'},\quad D'=\frac{(1+DR_1^2)C}{\log{R_1}-\log{R_2}}+D.$$
Remember the setting \(\rho_2\geq 0\). Then
for each \(\|z\|\leq\delta\leq(\min\{1,R_2\})^{1/t_0}\), 
$$\begin{aligned}
V_{K}\circ\tau^{-1}(z)
&\leq (V_{K\cap \tau^{-1}(\bar{\mathbb{B}}(\mathbf{0},R_2))}+\rho_{2})\circ\tau^{-1}(z)\\
&\leq M u_{\tau(K\cap\Omega)\cap \bar{\mathbb{B}}(\mathbf{0},R_2),\rho_{2}\circ\tau^{-1};\mathbb{B}(\mathbf{0},R_1)}(z)\\
&\leq M u_{\tau(K\cap\Omega)\cap \bar{\mathbb{B}}(\mathbf{0},\delta^{t_0})),\rho_{2}\circ\tau^{-1};\mathbb{B}(\mathbf{0},R_1)}(z)\leq M D'\delta^{\mu'}.
\end{aligned}$$
Therefore, this inequality and equivalence of (\(d_{\omega}\), \((\tau\times\tau)^*d_{\mathbb{C}^n}\)) on \(\tau^{-1}(\bar{\mathbb{B}}(\mathbf{0},R_2))\) tell that \(V_K\) is
\(\mu'\)-H\"older continuous at the point \(\tau^{-1}(\mathbf{0})=a\in K\).\end{proof}

\begin{proof}[Proof of (i) \(\Rightarrow\) (ii) in Theorem~\ref{thm:characterization-hcp-hcp}]
Suppose there exist constants \(0<\nu\leq1\) and \(0<q_0\) such that \(V_K\) is \(\nu\)-H\"older continuous at \(a\in K\) and \(K\) has a uniform density in capacity of order \(q_0\) at \(a\). We want to the local HCP of \(K\) at \(a\). Fix a holomorphic coordinate ball \((\Omega_0,\tau_0)\) of radius \(R_0\in(0,\infty]\) centered at \(a\). 
Take \(R_{3}:=\min\{1,R_0/2\}\). 
Since \(K\) has a uniform density in capacity at \(a\) of order \(q_0\), there exists a constant \(\varkappa_0>0\) such that $$\frac{Cap_{\omega}(K\cap \tau_0^{-1}(\bar{\mathbb{B}}(\mathbf{0},r))}{r^{q_0}}\geq\varkappa_0,\quad 0<r\leq \frac{R_{3}}{2}.$$

By \eqref{eq:omcap-under-relcap},
there exists a bi-Lipschitz constant \(C_0>0\) such that 
$$Cap_{\omega}(\cdot\cap\tau_0^{-1}(\mathbb{B}(\mathbf{0},\frac{2R_{3}}{3})))
\leq
C_0Cap(\cdot\cap \tau_0^{-1}(\mathbb{B}(\mathbf{0},\frac{2R_{3}}{3})),\tau_0^{-1}(\mathbb{B}(\mathbf{0},R_{3}))).$$
Then for each \(0<r\leq R_{3}/2\), 
since \(R_{3}/2<2R_{3}/3\), we have 
$$Cap_{\omega}(K\cap\tau_0^{-1}(\bar{\mathbb{B}}(\mathbf{0},r)))
\leq C_0Cap(K\cap\tau_0^{-1}(\bar{\mathbb{B}}(\mathbf{0},r)),
\tau_0^{-1}(\mathbb{B}(\mathbf{0},R_{3}))),$$
$$\frac{Cap((K\cap\tau_0^{-1}(\bar{\mathbb{B}}(\mathbf{0},r))\cap\tau_0^{-1}(\mathbb{B}(\mathbf{0},\frac{2R_{3}}{3}))
,\tau_0^{-1}(\mathbb{B}(\mathbf{0},R_{3})))}{r^{q_0}}\geq \frac{\varkappa_0}{C_0},$$
$$\frac{Cap((\tau_0(K\cap\Omega_0)\cap\bar{\mathbb{B}}(\mathbf{0},r))\cap\mathbb{B}(\mathbf{0},\frac{2R_{3}}{3})
,\mathbb{B}(\mathbf{0},R_{3}))}{r^{q_0}}\geq \frac{\varkappa_0}{C_0}.$$
Since \(Cap(cE,cO)=Cap(E,O)\),
for each \(0<r\leq R_{3}/2\),
$$\frac{Cap(\frac{3}{R_{3}}(\tau_0(K\cap\Omega_0)\cap\bar{\mathbb{B}}(\mathbf{0},r))\cap\mathbb{B}(\mathbf{0},2)
,\mathbb{B}(\mathbf{0},3))}{r^{q_0}}
\geq 
\frac{\varkappa_0}{C_0}.$$
By \eqref{eq:cap-to-classicalSiciak},
there exists a constant \(C_1>0\) such that,
for each \(0<r\leq R_3/2\), 
\[\label{eq:cap-to-classicalSiciak-in-the-second-pt}
\sup_{\bar{\mathbb{B}}(\mathbf{0},R_3)}L_{
(\tau_0(K\cap\Omega_0)\cap\bar{\mathbb{B}}(\mathbf{0},r))}^*
=\sup_{\bar{\mathbb{B}}(\mathbf{0},3)}L_{\frac{3}{R_{3}}
(\tau_0(K\cap\Omega_0)\cap\bar{\mathbb{B}}(\mathbf{0},r))}^*\leq\frac{C_1}{r^{q_0}}.
\]

Since \(V_K\) is \(\nu\)-H\"older continuous at \(a\)
and \(K\) has a uniform density in capacity at \(a\) of order \(q_0\),
Corollary~\ref{cor:sharp-Holder-norm}-(i) 
guarantees a constant \(A'>0\) satisfying 
$$V_{K\cap \tau_0^{-1}(\bar{\mathbb{B}}(\mathbf{0},r))}(x)\leq \frac{A'}{\varkappa_0}\frac{\delta^{\nu}}{r^{q_0+2}}, \quad d_{\omega}(x,a)\leq\delta\leq 1,\;0<r\leq \frac{R_3}{2}.$$
Since \(R_3<R_0\), there exists a bounded non-negative smooth strictly psh function \(\rho_1\) on \(\tau_0^{-1}(\mathbb{B}(\mathbf{0},R_3))\) of the form \(\rho_1(\cdot)=d\|\tau_0(\cdot)\|^2\) for a constant \(d>0\)
such that $$\|\rho_1\|_{\tau_0^{-1}(\mathbb{B}(\mathbf{0},R_3))}<\infty,\quad \rho_1(a)=0,\quad dd^c\rho_1\leq\omega,\quad \liminf_{x\to\partial(\tau_0^{-1}(\mathbb{B}(\mathbf{0},R_3)))}\rho_1(x)>0.$$

There exists a Lipschitz constant \(l>1\) such that \(d_{\omega}\) is dominated by \(l\times((\tau_0\times\tau_0)^*d_{\mathbb{C}^n})\) on \(\tau_0^{-1}(\bar{\mathbb{B}}(\mathbf{0},R_3))\). The 
comparability \eqref{eq:compare-C}, inequality \eqref{eq:re-ext-compare-c} and Lemma~\ref{lem:equivalence-notions} in turn give that,
for each \(\|z\|\leq\delta\leq R_3/l\) and \(0<r\leq R_3/2\),
with $$m:=\frac{\liminf_{x\to\partial(\tau_0^{-1}(\mathbb{B}(\mathbf{0},R_3)))}\rho_1(x)}{1+{\|\rho_1\|}_{\tau_0^{-1}(\mathbb{B}(\mathbf{0},R_3))}}=\frac{R_3^2d}{1+R_3^2d}\quad\text{and}$$
$$c(r):=\sup_{\mathbb{B}(\mathbf{0},R_3)}L_{\tau_0(K\cap\Omega_0)\cap \bar{\mathbb{B}}(\mathbf{0},r)}\,,$$
we have the following chain of inequalities: 
$$\begin{aligned}
L_{\tau_0(K\cap\Omega)\cap\bar{\mathbb{B}}(\mathbf{0},r))}(z)
&\leq c(r) u_{\tau_0(K\cap\Omega_0)\cap\bar{\mathbb{B}}(\mathbf{0},r);\mathbb{B}(\mathbf{0},R_3)}(z)\\
&\leq c(r) (u_{\tau_0(K\cap\Omega_0)\cap \bar{\mathbb{B}}(\mathbf{0},r),\rho_{1};\mathbb{B}(\mathbf{0},R_3)}(z)
-\inf_{\mathbb{B}(\mathbf{0},R_3)}\rho_{1})\\
&=c(r) u_{\tau_0(K\cap\Omega_0)\cap \bar{\mathbb{B}}(\mathbf{0},r),\rho_{1};\mathbb{B}(\mathbf{0},R_3)}(z)\\
&\leq \frac{c(r)}{m} (V_{K\cap\tau_0^{-1}(\bar{\mathbb{B}}(\mathbf{0},r))}(\tau_0^{-1}(z))+\rho_{1}(\tau_0^{-1}(z)))\\
&\leq \frac{c(r)}{m}(\frac{A'(l\delta)^{\nu}}{\varkappa_0 r^{q_0+2}}+d \delta^2)\\
&\leq \frac{1+R_3^2 d}{R_3^2 d}C_1(\frac{A'}{\varkappa_0}\frac{l^{\nu}}{r^{2q_0+2}}+\frac{d}{r^{q_0}})\delta^{\nu}\\
&\leq\frac{1+R_3^2 d}{R_3^2 d}C_1(\frac{A'}{\varkappa_0}l^{\nu}+d)
\frac{\delta^{\nu}}{r^{2q_0+2}}.
\end{aligned}$$
In this chain of inequalities, \eqref{eq:cap-to-classicalSiciak-in-the-second-pt} is utilized to dominate \(c(r)\). 
This shows that \(K\) has local \(\nu\)-H\"older continuity property at \(a\in K\) of order \(2q_0+2\). 
\end{proof}

We have shown the equivalence (i), (ii) and (iii) in Theorem~\ref{thm:characterization-hcp-hcp}. The proof of the equivalence (iv), (v) and (vi) in Theorem~\ref{thm:characterization-hcp-hcp} uses the proof of the equivalence (i), (ii) and (iii), with additional uniform controls for global properties. Again, the implication (v) \(\Rightarrow\) (vi) in this theorem is obvious from the definitions.
\begin{proof}[Proof of (vi) \(\Rightarrow\) (iv) in Theorem~\ref{thm:characterization-hcp-hcp}] Assume that there exist constants \(0<\mu\leq1\) and \(q>0\) such that \(K\) has weak local \(\mu\)-HCP of order \(q\) with respect to some finite cover \(\{(\Omega_j,\tau_j)\}_{j\in\Lambda}\) of \(X\) of holomorphic coordinate balls \(\tau_j(\Omega_j)=\mathbb{B}(\mathbf{0},5)\) with its refined cover \(\{(\tau_j^{-1}(\mathbb{B}(\mathbf{0},1)),\tau_j)\}_{j\in\Lambda}\) of \(X\). We want to show  a uniform density in capacity of \(K\) and the H\"older continuity of \(V_K\). 
Write \(\tau_j^{-1}(\bar{\mathbb{B}}(\tau_j(b),r))=:\bar{B}_j(b,r)\) and \(\tau_j^{-1}(\mathbb{B}(\tau_j(b),r))=:B_j(b,r)\). For each \(j\in\Lambda\), let \(b_j\in X\) be the point of \(\tau_j(b_j)=\mathbf{0}\).
There exists a constant \(C>0\) such that, for each \((b,j)\in K\times\Lambda\) of \(b\in K\cap B_j(b_j,1)=K\cap \tau_j^{-1}(\mathbb{B}(\mathbf{0},1))\),
$$\varpi_{\tau_j(K\cap \bar{B}_j(b,r))} (\tau_j(b),\delta)
\leq \frac{C  \delta^\mu}{r^q}, \quad 0<\delta\leq 5,\, 0< r \leq 1.$$
Here, \(\delta\in(0,5]\) instead of \(\delta\in(0,1]\) is possible due to \eqref{eq:local-modulus-of-continuity}.
By Remark~\ref{rmk:unifden}, there exist Lipschitz constants \(0<l_1\leq1\leq l_2\) such that for each \(j\in \Lambda\) and \(p,p'\in\tau_j^{-1}(\bar{\mathbb{B}}(\mathbf{0},4))\), 
$$l_1\|\tau_j(p)-\tau_j(p')\|\leq d_{\omega}(p,p')\leq l_2\|\tau_j(p)-\tau_j(p')\|.$$
Also, there exist uniform constants \(d,D>0\) such that, for each pair \((p,j)\in X\times \lambda\) with \(p\in \tau_j^{-1}(\mathbb{B}(\mathbf{0},1))\), $$\rho_{1,p,j}(\cdot):=d\|\tau_j(\cdot)-\tau_j(p)\|^2,\quad \rho_{2,p,j}(\cdot):=D\|\tau_j(\cdot)-\tau_j(p)\|^2$$ are non-negative bounded smooth strictly psh functions on \(B_j(p,3)\) satisfying $$dd^c\rho_{1,p,j}\leq\omega\leq dd^c\rho_{2,p,j},\quad \rho_{1,p,j}(p)=0=\rho_{2,p,j}(p),$$
$$\liminf_{x\to\partial B_j(p,3)}\rho_{1,p,j}(x)>0.$$

The weak local HCP tells, for each \((b,j)\in\ K\times\Lambda\) of \(b\in K\cap \tau_j^{-1}(\mathbb{B}(\mathbf{0},1))\),
\[\label{eq:basic-ineq-holder-loc1}L_{\tau_j(K\cap \bar{B}_{j}(b,r))}(z)\leq\frac{C\delta^{\mu}}{r^q}, \quad \|z-\tau_j(b)\|\leq\delta\leq5,0<r\leq1.\]
Then for \(\delta=5\) and \(C':=5^{\mu}C\), for each
\((b,j)\in\ K\times\Lambda\) of \(b\in K\cap \tau_j^{-1}(\mathbb{B}(\mathbf{0},1))\),
$$\sup_{\bar{\mathbb{B}}(\mathbf{0},3)}L_{\tau_j(K\cap\bar{B}_j(b,r))}^*\leq \frac{C'}{r^q}, \quad 0<r\leq1.$$
By the inequality \eqref{eq:cap-density-sufficient-loc},
for each \((b,j)\in\ K\times\Lambda\) of \(b\in K\cap \tau_j^{-1}(\mathbb{B}(\mathbf{0},1))\),
$$\frac{Cap(\tau_j(K\cap \bar{B}_j(b,r))\cap\mathbb{B}(\mathbf{0},2),\mathbb{B}(\mathbf{0},3))}{r^{nq}} \geq \frac{(2\pi)^n}{(C')^n}, \quad 0<r\leq1,$$
$$\frac{Cap((K\cap \bar{B}_j(b,r))\cap B_j(b_j,2),B_j(b_j,3))}{r^{nq}} \geq \frac{(2\pi)^n}{(C')^n}, \quad 0<r\leq1,$$ 
$$\frac{Cap(\{p\in K:d_{\omega}(b,p)\leq l_2r\}\cap B_j(b_j,2),B_j(b_j,3))}{r^{nq}} \geq \frac{(2\pi)^n}{(C')^n},\quad 0<r\leq1,$$
as \(\{p\in X:d_{\omega}(b,p)\leq l_2r\}\supset \bar{B}_j(b,r)\). Thus for each \(b\in K\), 
$$\frac{\sum_{i\in\Lambda} Cap(\{p\in K:d_{\omega}(b,p)\leq l_2r\}\cap B_i(b_i,2),B_i(b_i,3))}{r^{nq}} \geq \frac{(2\pi)^n}{(C')^n},\quad 0<r\leq1,$$
as \(b\in K\cap \tau_{j_b}^{-1}(\mathbb{B}(\mathbf{0},1))\) for some \(j_b\in\Lambda\). Bi-Lipschitz equivalence of two capacities proved in Lemma~\ref{lem:BTcap_globalandlocal} guarantees a constant \(C''>0\) satisfying  
$$\frac{Cap_{\omega}(\{p\in K:d_{\omega}(b,p)\leq R\})}{R^{nq}} \geq C'',\quad 0<R\leq l_2, \quad b\in K.$$
This is \eqref{eq:cap-density-global} in Remark~\ref{rmk:unifden}, which is a uniform density in capacity of \(K\) with \(nq\).

Now we want to show the H\"older continuity of \(V_K\).
Fix an arbitrary pair \((p,s)\) in \(K\times\Lambda\) with \(p\in K\cap \tau_s^{-1}(\mathbb{B}(\mathbf{0},1))\). Lemma~\ref{lem:equivalence-notions} guarantees that for $$M_s(p):=\|V_{K\cap\bar{B}_s(p,2)}^*\|_{B_s(p,3)}+\|\rho_{2,p,s}\|_{B_s(p,3)}+1<\infty,$$
$$(V_{K\cap\bar{B}_s(p,2)}+\rho_{2,p,s})\circ\tau_s^{-1}\leq M_s(p) u_{\tau_s(K\cap \bar{B}_s(p,2)),\rho_{2,p,s}\circ\tau_s^{-1};\mathbb{B}(\tau_s(p),3)}\text{ on }\mathbb{B}(\tau_s(p),3).$$
The inequality \eqref{eq:re-ext-compare-c}, comparability \eqref{eq:compare-C} and inequality \eqref{eq:basic-ineq-holder-loc1} yield that for each \(r\in(0,1]\) and for each \(\|z-\tau_s(p)\|\leq\delta\leq1\), as \(\tau_s(K\cap\bar{B}_s(p,r))\subset\bar{\mathbb{B}}(\tau_s(p),1)\),
$$
\begin{aligned}
&u_{\tau_s(K\cap \bar{B}_s(p,r)),\rho_{2,p,s}\circ\tau_s^{-1};\mathbb{B}(\tau_s(p),3)}(z)\\
&\leq (1+\|\rho_{2,p,s}\|_{B_s(p,3)})u_{\tau_s(K\cap \bar{B}_s(p,r));\mathbb{B}(\tau_s(p),3)}(z)+\sup_{\tau_s(K\cap \bar{B}_s(p,r))}\rho_{2,p,s}\circ\tau_s^{-1}\\
&\leq\frac{1+\|\rho_{2,p,s}\|_{B_s(p,3)}}{\inf_{\partial\mathbb{B}(\tau_s(p),3)}L_{\tau_s(K\cap \bar{B}_s(p,r))}}L_{\tau_s(K\cap \bar{B}_s(p,r))}(z)+\sup_{\tau_s(K\cap \bar{B}_s(p,r))}\rho_{2,p,s}\circ\tau_s^{-1}\\
&\leq\frac{1+\|\rho_{2,p,s}\|_{B_s(p,3)}}{\inf_{\partial\mathbb{B}(\tau_s(p),3)}L_{\bar{\mathbb{B}}(\tau_s(p),1)}}\frac{C\delta^{\mu}}{r^q}+Dr^2\\
&=\frac{1+9D}{\log{3}}\frac{C\delta^{\mu}}{r^q}+Dr^2.
\end{aligned}
$$\(\min\{\mu-qt,2t\}\) for \(t\in(0,\infty)\) is maximized at \(t_0:=\frac{\mu}{q+2}\in(0,1)\) as \(2t_0=\mu-qt_0=\frac{2\mu}{q+2}=:\mu'<\mu\). Accordingly, for each \(\|z-\tau_s(p)\|\leq\delta\leq1\) with \(r(\delta):=\delta^{t_0}\leq1\),
$$u_{\tau_s(K\cap \bar{B}_s(p,\delta^{t_0})),\rho_{2,p,s}\circ\tau_s^{-1};\mathbb{B}(\tau_s(p),3)}(z)\leq D'\delta^{\mu'},\quad D'=\frac{(1+9D)C}{\log{3}}+D.$$

Then for each \(\|z-\tau_s(p)\|\leq\delta\leq1\), since \(\rho_{2,p,s}\geq0\),
$$\begin{aligned}
V_{K}\circ\tau_s^{-1}(z)
&\leq (V_{K\cap \bar{B}_s(p,2)}+\rho_{2,p,s})\circ\tau_s^{-1}(z)\\
&\leq M_s(p) u_{\tau_s(K\cap \bar{B}_s(p,2)),\rho_{2,p,s}\circ\tau_s^{-1};\mathbb{B}(\tau_s(p),3)}(z)\\
&\leq M_s(p) u_{\tau_s(K\cap \bar{B}_s(p,\delta^{t_0})),\rho_{2,p,s}\circ\tau_s^{-1};\mathbb{B}(\tau_s(p),3)}(z)\leq M_s(p) D'\delta^{\mu'}.
\end{aligned}$$
For \(p\in \tau_s^{-1}(\mathbb{B}(\mathbf{0},1))= B_s(b_s,1)\), we have the set inclusion \(\bar{B}_s(p,2)\supset\bar{B}_s(b_s,1)=\tau_s^{-1}(\mathbb{\bar{B}}(\mathbf{0},1))\). Accordingly, 
$$\begin{aligned}
M_s(p)&\leq \|V_{K\cap\bar{B}_s(b_s,1)}^*\|_{B_s(b_s,4)}+9D+1\\
&\leq \sup_{j\in\Lambda:K\cap B_j(b_j,1) \neq\emptyset}\|V_{K\cap\bar{B}_s(b_s,1)}^*\|_{B_s(b_s,4)}+9D+1
=:D''.\end{aligned}$$
\(D''\) is a constant independent of the pair \((p,s)\in K\times\Lambda\) of \(p\in K\cap \tau_s^{-1}(\mathbb{B}(\mathbf{0},1))\).
Let \(0<\eta\leq1\) be a Lebesgue number for \(\{(\tau_j^{-1}(\mathbb{B}(\mathbf{0},1)),\tau_j)\}\) of \((X,d_{\omega})\).
Pick \(\delta_0:=l_1\eta/2\). 
Let \((x,\delta)\in X\times [0,\infty)\) of \(d_{\omega}(x,K)\leq \delta\leq\delta_0\). There exists \(p_x\in K\) of \(d_{\omega}(x,K)=d_{\omega}(x,p_x)\). Since \(\delta_0<\eta\), there exists an index \(s_x\in\Lambda\) of \(\{x,p_x\}\subset \tau_{s_x}^{-1}(\mathbb{B}(\mathbf{0},1))=B_{s_x}(b_{s_x},1)\). Then \(\|\tau_{s_x}(x)-\tau_{s_x}(p_x)\|\leq \delta/l_1\leq\delta_0/l_1\leq1\),
$$V_K(x)=V_K\circ \tau_{s_x}^{-1}(\tau_{s_x}(x))\leq M_{s_x}(p_x)D'(\delta/l_1)^{\mu'}\leq D'' D'(l_1)^{-\mu'}\delta^{\mu'}.$$
Therefore, \(V_K\) is \(\mu'\)-H\"older continuous by Lemma~\ref{lem:property}.
\end{proof}

\begin{proof}[Proof of (iv) \(\Rightarrow\) (v) in Theorem~\ref{thm:characterization-hcp-hcp}]
Assume that 
\(V_K\) is \(\nu\)-H\"older continuous and \(K\) has a uniform density in capacity for constants \(0<\nu\leq1\) and \(q_0>0\). 
We will show that \(K\) has the local HCP. 
Fix finite cover \(\{(\Omega_j,\tau_j)\}_{j\in\Lambda}\) of \(X\) of holomorphic coordinate balls \(\tau_j(\Omega_j)=\mathbb{B}(\mathbf{0},5)\) with its refined cover \(\{(\tau_j^{-1}(\mathbb{B}(\mathbf{0},1)),\tau_j)\}_{j\in\Lambda}\) of \(X\). 
Let \(\varkappa_0\) be a constant for this cover for the uniform density in capacity \eqref{eq:cap-density} of \(K\). 
We use the settings and notations same as those of the proof of (vi) \(\Rightarrow\) (iv) in Theorem~\ref{thm:characterization-hcp-hcp}: \(\bar{B}_j(b,r)\), \(B_j(b,r)\), \(b_j\), \(0<l_1\leq1\leq l_2\), \(d>0\), \(D>0\), \(\rho_{1,p,j}(\cdot):=d\|\tau_j(\cdot)-\tau_j(p)\|^2\), \(\rho_{2,p,j}(\cdot):=D\|\tau_j(\cdot)-\tau_j(p)\|^2\).

Let \(C_0\) be a bi-Lipschitz constant as in \eqref{eq:omcap-under-relcap} such that $$Cap_{\omega}(\cdot\cap B_j(b_j,2))
\leq
C_0Cap(\cdot\cap B_{j}(b_j,2),B_j(b_j,3)),\quad j\in\Lambda.$$
For each \((b,j)\in K\times\Lambda\) of \(b\in K\cap\tau_j^{-1}(\mathbb{B}(\mathbf{0},1))\) and \(r\in(0,1]\), 
$$Cap_{\omega}((K\cap\bar{B}_j(b,r))\leq C_0\times Cap((K\cap\bar{B}_j(b,r))\cap B_{j}(b_j,2),B_j(b_j,3)).$$
Accordingly, for each \((b,j)\in K\times\Lambda\) of \(b\in K\cap\tau_j^{-1}(\mathbb{B}(\mathbf{0},1))\),
$$\frac{Cap((K\cap\bar{B}_j(b,r))\cap B_{j}(b_j,2)
,B_j(b_j,3))}{r^{q_0}}\geq \frac{\varkappa_0}{C_0}, \quad 0<r\leq1.$$
By \eqref{eq:cap-to-classicalSiciak}, there exists a constant \(C_1>0\) such that,
\[\label{eq:cap-to-classicalSiciak-in-the-second}\sup_{\bar{\mathbb{B}}(\mathbf{0},3)}L_{\tau_j(K\cap\bar{B}_j(b,r))}^*\leq\frac{C_1}{r^{q_0}}, \quad 0<r\leq1, \quad  j\in\Lambda, \quad  b\in K\cap_j^{-1}(\mathbb{B}(\mathbf{0},1)).\]

By Corollary~\ref{cor:sharp-Holder-norm}-(i), 
there exist some constants \(0<\delta_1<2\), \(0<A'\) such that,
for each \((b,j)\in K\times\Lambda\) of \(b\in K\cap\tau_j^{-1}(\mathbb{B}(\mathbf{0},1))\),
$$V_{K\cap \bar{B}_j(b,r)}(x)\leq \frac{A'}{\varkappa_0}\frac{\delta^{\nu}}{r^{q_0+2}}, \quad d_{\omega}(x,b)\leq\delta\leq \delta_1,0<r\leq 1.$$
The comparability \eqref{eq:compare-C}, inequality \eqref{eq:re-ext-compare-c} and Lemma~\ref{lem:equivalence-notions} in turn imply that, for each \((b,j)\in K\times\Lambda\) with the restrictions  $$b\in K\cap\tau_j^{-1}(\mathbb{B}(\mathbf{0},1)),\quad \|z-\tau_j(b)\|\leq\delta\leq\delta_1/l_2,\quad 0<r\leq 1,$$
with $$m_j(b):=\frac{\liminf_{y\to\partial B_j(b,2)}\rho_{1,b,j}(y)}{1+\|\rho_{1,b,j}\|_{B_j(b,2)}}\equiv\frac{4d}{1+4d}\quad\text{and}$$
$$ c_j(b,r):=\sup_{\mathbb{B}(\tau_j(b),2)}L_{\tau_j(K\cap \bar{B}_j(b,r))}\;,$$
we have the following chain of inequalities: 
$$\begin{aligned}
L_{\tau_j(K\cap \bar{B}_j(b,r))}(z)&\leq c_j(b,r) u_{\tau_j(K\cap \bar{B}_j(b,r));\mathbb{B}(\tau_j(b),2)}(z)\\
&\leq c_j(b,r)(u_{\tau_j(K\cap \bar{B}_j(b,r)),\rho_{1,b,j};\mathbb{B}(\tau_j(b),2)}(z)
-\inf_{B_j(b,2)}\rho_{1,b,j})\\
&=c_j(b,r) u_{\tau_j(K\cap \bar{B}_j(b,r)),\rho_{1,b,j};\mathbb{B}(\tau_j(b),2)}(z)\\
&\leq \frac{c_j(b,r)}{m_j(b)} (V_{K\cap \bar{B}_j(b,r)}(\tau_j^{-1}(z))+\rho_{1,b,j}(\tau_j^{-1}(z)))\\
&\leq \frac{c_j(b,r)}{m_j(b)}(\frac{A'(l_2\delta)^{\nu}}{\varkappa_0 r^{q_0+2}}+d \delta^2)\\
&\leq \frac{1+4d}{4d}C_1(\frac{A'}{\varkappa_0}\frac{l_2^{\nu}}{r^{2q_0+2}}+\frac{d}{r^{q_0}})\delta^{\nu}\\
&\leq\frac{1+4d}{4d}C_1(\frac{A'}{\varkappa_0}l_2^{\nu}+d)
\frac{\delta^{\nu}}{r^{2q_0+2}}.
\end{aligned}$$
In this chain of inequalities, \eqref{eq:cap-to-classicalSiciak-in-the-second} is used to uniformly bound \(c_j(b,r)\) from above.
Therefore, \(K\) has local \(\nu\)-H\"older continuity property of order \(2q_0+2\).
\end{proof}

\begin{remark}\label{rmk:local-mu-hcp} 
For a compact \(K\subset X\) and \(b\in K\), suppose for 
\(\mu_1\in(0,1]\), \(q_1>0\) and \textbf{some} holomorphic coordinate ball \((\Omega_1,\tau_1)\) centered at \(b\in K\), for some \(C'_1>0\),
$$\varpi_{\tau_1(K\cap \Omega_1)\cap \bar{\mathbb{B}}(\mathbf{0},r)} (\mathbf{0},\delta) \leq \frac{C'_1\delta^{\mu_1}}{r^{q_1}}, \quad 0<\delta\leq 1,\, 0< r \leq 1.$$
Let \((\Omega_2,\tau_2)\) be another holomorphic coordinate ball centered at \(b\). There exist \(r_1,r_2\in(0,1]\) such that \(\tau_1(\Omega_1)\supset \bar{\mathbb{B}}(\mathbf{0},r_1),\tau_2(\Omega_2)\supset \bar{\mathbb{B}}(\mathbf{0},r_2)\).
There exist \(0<r_3\leq r_1\) and \(l\in(1,\infty)\) such that
\(\tau_2^{-1}(\bar{\mathbb{B}}(\mathbf{0},r_2))\supset \tau_1^{-1}(\bar{\mathbb{B}}(\mathbf{0},r_3))\) and \(lr_3\leq r_2\) and
$${\mathbb{B}}(\mathbf{0},lr)\supset\tau_2\circ\tau_1^{-1}({\mathbb{B}}(\mathbf{0},r))\supset {\mathbb{B}}(\mathbf{0},r/l),\quad r\in(0,r_3].$$
$$\sup_{\bar{\mathbb{B}}(\mathbf{0},r_3)}L_{\tau_1(K\cap \tau_1^{-1}(\bar{\mathbb{B}}(\mathbf{0},r)))}^*\leq \frac{C_1'r_3^{\mu_1}}{r^{q_1}},\quad 0<r\leq \frac{r_3}{l^3}.$$
From this, by the inequality~\eqref{eq:cap-density-sufficient-loc},
$$\frac{Cap(\tau_1(K\cap \tau_1^{-1}(\bar{\mathbb{B}}(\mathbf{0},r))),\mathbb{B}(\mathbf{0},r_3))}{r^{nq_1}}\geq\frac{(2\pi)^n}{(C'_1 r_3^{\mu_1})^n},\quad
0<r\leq \frac{r_3}{l^3}.$$
By the monotonicity of the capacity, since 
\(\tau_2\circ\tau_1^{-1}(\bar{\mathbb{B}}(\mathbf{0},r_3))\supset\bar{\mathbb{B}}(\mathbf{0},r_3/l)\), 
we get
$$\frac{Cap(\tau_2(K\cap \tau_2^{-1}(\bar{\mathbb{B}}(\mathbf{0},lr))),\mathbb{B}(\mathbf{0},r_3/l))}{r^{nq_1}}\geq\frac{(2\pi)^n}{(C'_1 r_3^{\mu_1})^n},\quad
0<r\leq \frac{r_3}{l^3}.$$
By the inequality~\eqref{eq:capacity comparison in Cn}, 
since \(lr_3/l^3=r_3/l^2<2r_3/(l^2+l)<r_3/l\),
for some constant \(c'(r_3,l):=G(2r_3/(l^2+l))>0\),
for \(c:=(C'_ 1r_3^{\mu_1})^nc'(r_3,l)/(2\pi)^n>0\),
$$\sup_{\bar{\mathbb{B}}(\mathbf{0},r_3/l)}L_{\tau_2(K\cap \tau_2^{-1}(\bar{\mathbb{B}}(\mathbf{0},lr)))}^*\leq \frac{(C_1'r_3^{\mu_1})^n c'(r_3,l)}{(2\pi)^nr^{nq_1}}=\frac{c}{r^{nq_1}},\quad 0<r\leq \frac{r_3}{l^3}.$$

By the definition of relative extremal functions in Section~\ref{sec:appendix}, for \(0<r\leq r_3/l^3\), 
$$u_{\tau_2(K\cap \tau_2^{-1}(\bar{\mathbb{B}}(\mathbf{0},lr)));\mathbb{B}(\mathbf{0},r_3/l)}
\circ (\tau_2\circ\tau_1^{-1})
=u_{\tau_1(K\cap \tau_2^{-1}(\bar{\mathbb{B}}(\mathbf{0},lr)));\tau_1\circ\tau_2^{-1}(\mathbb{B}(\mathbf{0},r_3/l))}.$$
By monotonicity with respect to set inclusion, for \(0<r\leq r_3/l^3\), 
$$u_{\tau_1(K\cap \tau_2^{-1}(\bar{\mathbb{B}}(\mathbf{0},lr)));\tau_1\circ\tau_2^{-1}(\mathbb{B}(\mathbf{0},r_3/l))}
\leq u_{\tau_1(K\cap \tau_1^{-1}(\bar{\mathbb{B}}(\mathbf{0},r)));\mathbb{B}(\mathbf{0},r_3/l^2)}.$$
By the comparability inequality \eqref{eq:compare-C}, 
on \(\mathbb{B}(\mathbf{0},r_3/l^2)\) and for \(0<r\leq r_3/l^3\), 
$$u_{\tau_1(K\cap \tau_1^{-1}(\bar{\mathbb{B}}(\mathbf{0},r)));\mathbb{B}(\mathbf{0},r_3/l^2)}
\leq (\inf_{\partial\mathbb{B}(\mathbf{0},r_3/l^2)}L_{\tau_1(K\cap \tau_1^{-1}(\bar{\mathbb{B}}
(\mathbf{0},r)))})^{-1}L_{\tau_1(K\cap\tau_1^{-1}( \bar{\mathbb{B}}(\mathbf{0},r)))}.$$
We can estimate the coefficient on the right-hand side for \(0<r\leq r_3/l^3\) as 
$$(\inf_{\partial\mathbb{B}(\mathbf{0},r_3/l^2)}L_{\tau_1(K\cap \tau_1^{-1}(\bar{\mathbb{B}}
(\mathbf{0},r)))})^{-1}
\leq (\inf_{\partial\mathbb{B}(\mathbf{0},r_3/l^2)}L_{\bar{\mathbb{B}}(\mathbf{0},r)})^{-1}
=(\log{\frac{r_3}{l^2r}})^{-1}
\leq(\log{l})^{-1}.$$
Accordingly, on \(\mathbb{B}(\mathbf{0},r_3/l^2)\) and for \(0<r\leq r_3/l^3\), 
$$\begin{aligned}
&u_{\tau_2(K\cap \tau_2^{-1}(\bar{\mathbb{B}}(\mathbf{0},lr)));\mathbb{B}(\mathbf{0},r_3/l)}
\circ (\tau_2\circ\tau_1^{-1})
\leq(\log{l})^{-1}L_{\tau_1(K\cap\tau_1^{-1}( \bar{\mathbb{B}}(\mathbf{0},r)))}.
\end{aligned}$$


Therefore, by the comparability inequality \eqref{eq:compare-C}, 
for each \(\|z\|\leq\delta<r_3/l^2\) and \(0<r\leq r_3/l^3\), the value $L_{\tau_2(K\cap\tau_2^{-1}(\bar{\mathbb{B}}(\mathbf{0},lr)))}\circ(\tau_2\circ\tau_1^{-1})(z)$ is less than equal to
$$(\sup_{\mathbb{B}(\mathbf{0},r_3/l)}L_{\tau_2(K\cap \tau_2^{-1}(\bar{\mathbb{B}}(\mathbf{0},lr)))}^*)
u_{\tau_2(K\cap \tau_2^{-1}(\bar{\mathbb{B}}(\mathbf{0},lr)));\mathbb{B}(\mathbf{0},r_3/l)}\circ(\tau_2\circ\tau_1^{-1})(z),$$
which is less than equal to
$$\frac{c}{r^{nq_1}}\frac{1}{\log{l}}L_{\tau_1(K\cap\tau_1^{-1}( \bar{\mathbb{B}}(\mathbf{0},r)))}(z)
\leq\frac{c}{r^{nq_1}}\frac{1}{\log{l}}\frac{C'_1\delta^{\mu_1}}{r^{q_1}}.
$$
Using the variable \(t=lr\), we get 
$$L_{\tau_2(K\cap\tau_2^{-1}(\bar{\mathbb{B}}(\mathbf{0},t)))}((\tau_2\circ\tau_1^{-1})(z))
\leq \frac{cC'_1l^{(n+1)q_1}}{\log{l}}\frac{\delta^{\mu_1}}{r^{(n+1)q_1}},\;\|z\|\leq\delta<\frac{r_3}{l^2},\;
t\in(0,\frac{r_3}{l^2}].$$
Therefore, for constants \(0<\mu\leq1\) and \(q>0\), if a compact set \(K\subset X\) has the weak local \(\mu\)-HCP of order \(q\) at \(b\in K\), then \(K\) has local \(\mu\)-HCP of order \((n+1)q\) at \(b\in K\). By the same reason, if \(K\) has the weak local \(\mu\)-HCP of order \(q\), then \(K\) has local \(\mu\)-HCP of order \((n+1)q\).   
\end{remark}

\subsection{Proof of the third theorem}
Using Theorem~\ref{thm:characterization-hcp} and Theorem~\ref{thm:characterization-hcp-hcp}, we can prove that a compact subset in \(X\) with the weak H\"older continuity property is \((C^{\alpha},C^{\alpha'})\)-regular (defined in Definition~\ref{defn:DMN-regular}) for some constants \(\alpha\) and \(\alpha'\) in \((0,1]\) depending on the subset. 
\begin{proof}[Proof of Theorem~\ref{thm:UPC-intro}]
There exist constants \(\mu\in (0,1]\) and \(q>0\) such that \(K\) has weak local \(\mu\)-H\"older continuity property of order \(q\). Then by Theorem~\ref{thm:characterization-hcp-hcp}, \(K\) has a uniform density in capacity of order \(nq\) and for \(\mu':=2\mu/(q+2)\), \(V_K\) is \(\mu'\)-H\"older continuous. 

\cite{Mc34}, Mcshane extension, extends a H\"older continuous function from a compact subset to \(X\) with the same H\"older exponent and the same H\"older constant. Since \(X\) is compact, each connected component of \(X\) has a finite diameter with respect to the Riemannian distance \(d_{\omega}\). Therefore, 
for a given bounded subset \(\{\phi_\gamma\}_{\gamma\in\Gamma}\) in \(C^{\mu'}(K)\), each \(\phi_{\gamma}\) can be extended to a function \(\tilde{\phi}_{\gamma}\) in \(C^{\mu'}(X)\) such that \(\{\tilde{\phi}_\gamma\}_{\gamma\in\Gamma}\) is bounded in \(C^{\alpha}(X)\). Let \(\mu'':=(\mu')^2/(\mu'+2+nq)\). 
The direct computation gives $$\mu''=\frac{4\mu^2}{(q+2)(2\mu+(q+2)(nq+2))}.$$
Then by Remark~\ref{rmk:exponent-W}, for each \(\gamma\in\Gamma\), \(V_{K,\tilde{\phi}_{\gamma}}\) is \(\mu''\)-H\"older continuous. 

Remark~\ref{rmk:exponent-W} is a consequence of the proof of Theorem~\ref{thm:characterization-hcp-hcp}-(ii). By the proof Theorem~\ref{thm:characterization-hcp-hcp}-(ii), we know that \(\{V_{K,\tilde{\phi}_{\gamma}}\}_{\gamma\in\Gamma}\) is bounded in \(C^{\alpha}(X)\). The reason is as follows. Proposition~\ref{prop:W} is used in the proof Theorem~\ref{thm:characterization-hcp-hcp}-(ii) to extend the H\"older continuity of \(V_{K,\tilde{\phi}_{\gamma}}\) on \(K\) to the global H\"older continuity. Proposition~\ref{prop:W} uses Lemma~\ref{kisleg}. In Lemma~\ref{kisleg}, the constant \(c_0\) depends only on \((X,\omega)\) and the constants \(c_1\) and \(\delta_1\) depends only on \((X,\omega,\|V_{K,\tilde{\phi}_{\gamma}}\|_X)\). By Lemma~\ref{lem:weight-vs-unweight}, 
\(\|V_{K,\tilde{\phi}_{\gamma}}\|_X\leq \|V_K\|_X+\|\phi_{\gamma}\|_K\). Therefore, \(\{V_{K,\tilde{\phi}_{\gamma}}\}_{\gamma\in\Gamma}\) is bounded in \(C^{\alpha}(X)\). Since \(V_{K,\tilde{\phi}_{\gamma}}=V_{K,{\phi}_{\gamma}}\) for each \(\gamma\in\Gamma\), we know that \(\{V_{K,{\phi}_{\gamma}}\}_{\gamma\in\Gamma}\) is bounded in \(C^{\alpha}(X)\). Therefore, \(K\) is \((C^{\mu'},C^{\mu''})\)-regular. 
\end{proof}

\section{Appendix}
\label{sec:appendix}
In this section, we give the relation between extremal functions on \(X\) and locally defined weighted relative extremal functions in \(\mathbb{C}^n\). Nguyen in \cite{Ng24} proved this relation for a compact K\"ahler manifold, and we adjust his proof for a compact Hermitian manifold. 

Let \(a\in X\). There is a holomorphic coordinate ball $(\Omega, \tau)$ in \(X\) centered at \(a\), which is the relatively compact restricted chart in another holomorphic chart (if that bigger chart is \((U,f)\), \(U\) is open in \(X\), \(f\) is biholomorphic from \(U\) onto an open set in \(\mathbb{C}^n\), \(\Omega\Subset U,f|_\Omega=\tau\)): \(\Omega\) is open in \(X\), \(\tau\) is biholomorphic with
\[\label{eq:c-ball}\tau: \Omega \to \mathbb{B}:=\mathbb{B}(\mathbf{0},1) \subset \mathbb{C}^n, \quad \tau(\Omega)=\mathbb{B},\quad \tau(a)=\mathbf{0}.\] 
There exist non-negative bounded smooth strictly psh functions $\rho_1,\rho_2\in PSH(\Omega) \cap C^\infty(\Omega)\cap L^{\infty}(\Omega)$ 
such that \(\liminf_{x\to\partial\Omega}\rho_1(x)>0\), 
\(\rho_1(a)=\rho_2(a)=0\) and 
$dd^c\rho_1\leq\omega \leq dd^c \rho_2$.
(\(\rho_j(x):=A_j|\tau(x)|^2,\;x\in\Omega\), for some \(A_j\in(0,\infty)\) are such functions, as \(f|_{\Omega}=\tau\), \(\Omega\Subset U\).)  

For $E\Subset \mathbb{B}$, the (zero-one) relative extremal function of \(E\) on \(\mathbb{B}\) is 
\[\label{eq:zero-one-C}\notag
	u_{E}(z) =u_{E;\mathbb{B}}(z):= \sup \left\{ v(z) : v\in PSH(\mathbb{B}), v|_E \leq 0, v\leq  1 \right\},\quad
z\in\mathbb{B}.\]
\cite[Proposition~5.3.3]{Kl91} gives a relation of \(L_F\) and \(u_F\) for a compact non-pluripolar set \(F\) in \(\mathbb{B}\) with \(C_1:=\inf_{\partial\mathbb{B}} L_{F}\in (0,\infty)\) and \(C_2:=\sup_{\mathbb{B}} L_F^*=\sup_{\mathbb{B}}L_F\in (0,\infty)\) as
\[\label{eq:compare-C}
	C_1 \, u_F(z) \leq L_F(z) \leq C_2 u_F(z), \quad{z\in\mathbb{B}}.\]
For a bounded function \(\phi:\mathbb{B}\to\mathbb{R}\),
the weighted relative extremal function of \((E,\phi)\) on \(\mathbb{B}\) is
\[\label{eq:zero-one-CW}\notag
	u_{E,\phi} (z) =u_{E,\phi;\mathbb{B}}(z):=\sup \{ v(z) : v\in PSH(\mathbb{B}), \, v|_E \leq \phi|_E, v\leq  \phi+ 1 \},\quad z\in\mathbb{B}.\]
Like Lemma~\ref{lem:weight-vs-unweight}-(i),
by the definitions, (\(\|\cdot\|_{\mathbb{B}}\) denotes the supremum norm on \(\mathbb{B}\))
\[\label{eq:re-ext-compare-c}
	u_{E} +\inf_{\mathbb{B}}\phi \leq u_{E,\phi} \leq  (1+\|\phi\|_{\mathbb{B}})u_E + \sup_E \phi  \quad\text{when } \phi\geq0.\]
Like \eqref{eq:monotonicity}, for $E_1\subset E_2\Subset \mathbb{B}$ and $-\infty<-\|\phi_1\|_{\mathbb{B}}\leq \phi_1 \leq \phi_2\leq\|\phi_2\|_{\mathbb{B}}<\infty$, 
\[\label{eq:re-ext-monotonicity}
	u_{E_2,\phi_1} \leq u_{E_1,\phi_1} \leq u_{E_1, \phi_2}.\]

The following lemma tells a relation between an extremal function on \(X\) and the pullback of a weighted relative extremal functions on \(\mathbb{B}\) by a chart on \(X\). 
\begin{lem} \label{lem:equivalence-notions} Let $\emptyset\neq K$ be a compact non-pluripolar subset of \(X\) and \(a\in K\). Let \((\Omega,\tau)\) be the holomorphic chart centered at \(a\) given in \eqref{eq:c-ball} and \(\rho_1, \rho_2\) be the functions below \eqref{eq:c-ball}. If \(K\subset{\Omega}\), then there exist constants 
$0<M=M(K,\Omega,\|\rho_2\|_{\Omega})$ and 
$0<m=m(\liminf_{x\to\partial\Omega}\rho_1(x),\|\rho_1\|_{\Omega})$ 
such that,
for $\hat{\rho_j}:= \rho_j\circ \tau^{-1}$,
$$
 V_K\circ\tau^{-1}(z)+\hat{\rho_2}(z)
 \leq M\, u_{\tau(K),\hat{\rho_2}} (z), \quad z\in\mathbb{B},
$$
$$
 m\, u_{\tau(K),\hat{\rho_1}} (z) \leq 
 V_K\circ\tau^{-1}(z)+\hat{\rho_1}(z),\quad z\in\mathbb{B}.
$$
In fact, the second inequality holds even when \(K\) is pluripolar.
\end{lem}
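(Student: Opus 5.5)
The plan is to compare the competitors of the two envelopes directly, transferring candidates in each direction by gluing. The positive constants $\rho_j$ absorb the discrepancy between $\omega$-psh and psh. Throughout, set $\hat v:=v\circ\tau^{-1}$ for functions on $\Omega$.

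\emph{First inequality.} Put $M:=\sup_X V_K^*+\|\rho_2\|_\Omega+1$. This is finite because $K$ is non-pluripolar (Proposition~\ref{prop:pluripolarextremal}). Take a competitor $v\in PSH(X,\omega)$ with $v|_K\le 0$.
\begin{itemize}
\item Since $dd^c\rho_2\ge\omega$, the function $w:=(\hat v+\hat\rho_2)/M$ is psh on $\mathbb{B}$.
\item On $\tau(K)$ we have $w\le\hat\rho_2/M\le\hat\rho_2$, using $M\ge1$ and $\rho_2\ge0$.
\item Everywhere we have $w\le (M-1)/M\le 1\le \hat\rho_2+1$.
\end{itemize}
So $w$ is a competitor for $u_{\tau(K),\hat\rho_2}$. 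Taking the supremum over $v$ gives $V_K\circ\tau^{-1}+\hat\rho_2\le M u_{\tau(K),\hat\rho_2}$. Only a pointwise supremum is involved, so no regularization is needed and this direction is routine.

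\emph{Second inequality.} Take a competitor $u$ for $u_{\tau(K),\hat\rho_1}$. Then $u$ is psh on $\mathbb{B}$, $u\le\hat\rho_1$ on $\tau(K)$, and $u\le\hat\rho_1+1$. We want an $\omega$-psh function on $X$ that is $\le0$ on $K$ and dominates $m u-\hat\rho_1$ near $a$.
\begin{itemize}
\item Set $c:=\liminf_{x\to\partial\Omega}\rho_1>0$ and $m:=c/(2(1+\|\rho_1\|_\Omega))$. We may shrink $c$ slightly so that $\rho_1\ge c$ on a collar $\Omega\setminus\Omega'$ with $K\subset\Omega'\Subset\Omega$; if $K$ meets the collar we handle it via the ordering below.
\item Consider $g:=m\,u\circ\tau-\rho_1$ on $\Omega$. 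Because $dd^c\rho_1\le\omega$ and $m\le1$, we get $\omega+dd^c g\ge \omega-dd^c\rho_1\ge0$, so $g\in PSH(\Omega,\omega)$.
\item On $K$: $g\le m\rho_1-\rho_1\le0$.
\item Near $\partial\Omega$: $g\le m(\|\rho_1\|_\Omega+1)-c=-c/2<0$.
\end{itemize}

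We then glue with Proposition~\ref{prop:envelope}(i). Take $U_1=\{x\in\Omega:\rho_1(x)<c/2\}\cup\Omega'$, chosen so that $\overline{U_1}\subset\Omega$, let $v_1:=g$, and let $v_2:=-c/2$. The constant $v_2$ lies in $PSH(X,\omega)$, and on $\partial U_1$ we have $\limsup g\le -c/2$. Alternatively, use $\max\{g,-c/2\}$ on $\Omega$, which equals $-c/2$ near $\partial\Omega$, and extend it by $-c/2$. The glued function $G$ is $\omega$-psh on $X$ and satisfies $G\le\max\{0,-c/2\}=0$ on $K$. Hence $G\le V_K$, so $m u\circ\tau-\rho_1\le V_K$ on $\Omega$, and taking the supremum over $u$ gives $m u_{\tau(K),\hat\rho_1}\le V_K\circ\tau^{-1}+\hat\rho_1$.

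This argument never used non-pluripolarity, which justifies the final remark in the statement; the values $+\infty$ of $V_K$ are harmless. The main obstacle is this gluing step, specifically getting the boundary inequality $g\le -c/2$ uniformly near $\partial\Omega$. That is exactly where the choice of $m$ in terms of $\liminf_{\partial\Omega}\rho_1$ and $\|\rho_1\|_\Omega$ enters. I would first try the cleaner route of $\max\{g,-c/2\}$ extended by the constant $-c/2$, checking that the two definitions agree on a neighborhood of $\partial\Omega$.
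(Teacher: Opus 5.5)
Your proposal is correct and follows the paper's proof essentially step by step: you obtain the first inequality through the competitor $(v+\rho_2)/M$, and the second by gluing $m\,u\circ\tau-\rho_1$ into an $\omega$-psh function on $X$ via Proposition~\ref{prop:envelope}(i). The differences are cosmetic: the paper takes $m=\liminf_{\partial\Omega}\rho_1/(1+\|\rho_1\|_\Omega)$ and glues with $V_O$ for an open neighbourhood $O\supset K$ (the constant $0$ would work equally well), and your collar and shrinking of $c$ are unnecessary, since the bound $\limsup_{x\to\partial\Omega} g\le -c/2$ already suffices with $U_1=\Omega$.
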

\begin{proof} We use the proof of \eqref{eq:compare-C} in \cite[Proposition~5.3.3]{Kl91}. 
$\sup_\Omega V_K^*=\|V_K^*\|_{\Omega} <\infty$ by Proposition~\ref{prop:pluripolarextremal} as \(K\) is not pluripolar. 
Let $v\in PSH(X,\omega)$, $v|_K\leq 0$. 
Then $v +\rho_2$ is in $ PSH({\Omega})$. Take $M:=\|V_K^*\|_{\Omega} +\|\rho_2\|_{\Omega}+1$. 
Since $M\geq1$ and $\rho_2 \geq 0$, 
$$(v +\rho_2)|_{K} \leq {\rho_2}|_K\leq M\rho_2|_K, 
\quad v+\rho_2\leq M\leq M(1+\rho_2).$$ 
These mean \(v+\rho_2\leq M u_{\tau(K),\hat{\rho_2}}\circ \tau\). Since \(v\) was an arbitrary competitor for \(V_K\), 
we get the first inequality.

To obtain the second inequality,
Let $m':=\liminf_{x\to\partial\Omega} \rho_1(x)>0$. 
Take an open neighborhood \(O\) of \(K\). Since \(O\) is not pluripolar, \(V_O=V_O^*\in PSH(X,\omega)\) by Propositions~\ref{prop:pluripolarextremal}, \ref{prop:elementary}. 
Let $v \in PSH(\mathbb{B})$ be a competitor for \(u_{\tau(K),\hat{\rho_1}}\). In other words, $v|_{\tau(K)} \leq \hat{\rho_1}|_{\tau(K)}$, $v\leq \hat{\rho_1}+1$. Define 
$$v_O|_\Omega:=\max\{\frac{m'}{1+\|\rho_1\|_{\Omega}}v\circ\tau-\rho_1,V_O\}|_\Omega,\quad
v_O|_{X\setminus\Omega}:=V_O|_{X\setminus\Omega}.$$ 
By Proposition~\ref{prop:envelope}-(i), as
$$\limsup_{x\to \partial\Omega} 
[\frac{m'
}{1+\|\rho_1\|_{\Omega}} v\circ \tau (x)
- \rho_1(x)] \leq 
m'- \liminf_{x\to\partial \Omega} \rho_1(x)
=0\leq V_{O},$$ 
we have \(v_O\in PSH(X,\omega)\).
Let $m:=m'/(1+\|\rho_1\|_\Omega)$. 
Since \(m\in (0,1)\), $v\circ\tau|_K\leq\rho_1|_K$ and $\rho_1\geq0$, we have $(mv\circ\tau-\rho_1)|_K\leq0$. Also, \(V_O=0\) on \(K\). Therefore, \(v_O\leq V_K\), \(mv\circ\tau-\rho_1\leq V_K|_\Omega\). 
Since \(v\) was an arbitrary competitor for \(u_{\tau(K),\hat{\rho_1}}\), the second inequality holds.
\end{proof}


\begin{thebibliography}{000000000}



\bibitem[Ah26]{Ah26} H. Ahn, {\it Characterization of continuity of Siciak-Zaharjuta extremal functions on compact Hermitian manifolds}, preprint (2026), arXiv:2604.01885. 

\bibitem[AT84]{AT84}
H.~J. Alexander\ and\ B.~A. Taylor, {\it Comparison of two capacities in ${\bf C}\sp{n}$}, Math. Z. {\bf 186} (1984), no.~3, 407--417.







\bibitem[BT82]{BT82} E. Bedford and B. A. Taylor, {\it A new capacity for plurisubharmonic functions.} Acta Math. {\bf149} (1982), 1--40.





\bibitem[BD12]{BD12}
{R. Berman and J.-P. Demailly, \it Regularity of
plurisubharmonic upper envelopes in big cohomology classes. \rm
Proceedings of the Symposium ``Perspectives in Analysis, Geometry
and Topology'' in honor of Oleg Viro (Stockholm University, May 2008),
Eds. I.~Itenberg, B.~J\"oricke, M.~Passare, Progress in Math.\ {\bf 296}, Birkh\"auser/Springer, Boston (2012) 39--66.}








\bibitem[BK07]{BK07}
Z. B\l ocki\ and\ S. Ko\l odziej, {\it On regularization of plurisubharmonic functions on manifolds}, Proc. Amer. Math. Soc. {\bf 135} (2007), no.~7, 2089--2093.






\bibitem [De94]{De94}
{J.-P. Demailly, \it Regularization of closed positive
currents of type $(1,1)$ by the flow of a Chern
connection. \rm  Aspects of Math. Vol. E26, Vieweg (1994), 105--126.}

\bibitem [De16]{De16}
S. Dinew, {\it Pluripotential theory on compact Hermitian manifolds,} Annales de la Facult\'e des Sciences de Toulouse. {\bf XXV} (2016), no.~1, 91--139. 

\bibitem[DDGHKZ]{DDGHKZ}
J.-P. Demailly, S. Dinew, V. Guedj, P. Hiep, S. Ko\l odziej and  A. Zeriahi, {\it H\"older continuous solutions to Monge-Amp\`ere equations}, J. Eur. Math. Soc. (JEMS) {\bf 16} (2014), no.~4, 619--647.




\bibitem[DMN17]{DMN} T.-C. Dinh, X. Ma\ and\ V. Nguy\^{e}n, {\it Equidistribution speed for Fekete points associated with an ample line bundle}, Ann. Sci. \'{E}c. Norm. Sup\'{e}r. (4) {\bf 50} (2017), no.~3, 545--578. 

\bibitem[DK12]{DK12} 
S. Dinew, S. Ko\l odziej, {\it Pluripotential estimates on compact Hermitian Manifolds}, Advances in geometric analysis, 69--86, Adv. Lect. Math, {21}, International Press of Boston, Somerville, MA, 2012.

\bibitem[GL22]{GL22} V. Guedj, C.~H. Lu, {\it Quasi-plurisubharmonic envelopes 2: Bounds on Monge-Amp{\`e}re volumes}, Algebraic Geometry. {\textbf{9}} (2022), no.~6, 688--713.





\bibitem[GZ17]{GZ17}
V. Guedj\ and\ A. Zeriahi, {\it Degenerate complex Monge-Amp\`ere equations}, EMS Tracts in Mathematics, 26, Eur. Math. Soc., Z\"{u}rich, 2017. 

\bibitem[GKZ08]{GKZ08}
V. Guedj, S. Ko\l odziej, A. Zeriahi,
{\it H\"older continuous solutions to Monge-Amp\`ere equations}, 
Bull. Lond. Math. Soc. {\textbf{40}} (2008), 1070--1080.

\bibitem[Kl91]{Kl91}
M. Klimek, {\it Pluripotential theory}, London Mathematical Society Monographs. New Series, 6, The Clarendon Press, Oxford University Press, New York, 1991.

\bibitem[Ko05]{Ko05}
S. Ko{\l}odziej, {\it The complex Monge-Amp\`ere equation and pluripotential theory}, Mem. Amer. Math. Soc. {\bf 178} (2005), no.~840, x+64 pp.

\bibitem[Ko08]{Ko08}
S. Ko{\l}odziej, {\it H\"older continuity of solutions to the complex Monge-Amp\`ere equation with the right hand
side in \(L^p\). The case of compact K¨ahler manifolds}, Math. Ann. {\textbf{342}} (2008), 379--386. 


\bibitem[KN18]{KN18}
S. Ko\l odziej\ and\ N.~C. Nguyen, {\it H\"older continuous solutions of the Monge–Amp\`ere equation on compact Hermitian manifolds}, Annales de l'Institut Fourier. {\bf 644} (2018),
no.~7, 2951--2964.


\bibitem[KN19]{KN19}
S. Ko\l odziej\ and\ N.~C. Nguyen, {\it Stability and regularity of solutions of the Monge-Amp\`ere equation on Hermitian manifolds}, Adv. Math. {\bf 346} (2019), 264--304.

\bibitem[KN23]{KN23}
S. Ko\l odziej, N.-C. Nguyen, {\em The Dirichlet problem for the complex Monge-Ampere equation on Hermitian manifolds with boundary,} \rm Calc. Var. {\bf 62}, 1 (2023).


\bibitem[KN25]{KN25}
S. Ko\l odziej, N.-C. Nguyen, {\em Polar sets for \(m\)-subharmonic functions on compact Hermitian manifolds}, 
preprint (2025), arXiv:2511.01159.




\bibitem[LPT21]{LPT21}
C.~H. Lu, T.~T. Phung\ and\ T.~D. T\^{o}, {\it Stability and H\"{o}lder regularity of solutions to complex Monge-Amp\`ere equations on compact Hermitian manifolds}, Ann. Inst. Fourier (Grenoble) {\bf 71} (2021), no.~5, 2019--2045.

\bibitem[Mc34]{Mc34}
E.~J. McShane, {\it Extension of Range of functions}, Bull. Amer. Math. Soc. {\textbf{40}} (1934), no.~12, 837--842. 


\bibitem[Ng24]{Ng24}
N.~C. Nguyen, {\it Regularity of the Siciak-Zaharjuta extremal function on compact Kähler manifolds},
Trans. Am. Math. Soc. {\textbf{377}} (2024) no.~11, 8091--8123.

\bibitem[NST11]{NST11}
C. Nour, R.~J. Stern and J. Takche, {\it Validity of the Union of
Uniform Closed Balls Conjecture}, J. Convex Anal. {\textbf{18}} (2011) no.~2, 589--600.













\bibitem[PSS12]{PSS12} 
D.~H. Phong, J. Song, J. Sturm, {\it Complex Monge-Amp\`ere equations}, Surveys in Differential Geometry, {\textbf{17}}, no. 1, 327--410, International Press of Boston, 2012.










\bibitem[Si62]{Si62}
J. Siciak, {\it On some extremal functions and their applications in the theory of analytic functions of several complex variables,} Trans. Amer. Math. Soc. {\bf 105} (1962), 322--357.


\bibitem[Si81]{Siciak81}
J. Siciak, {\it Extremal plurisubharmonic functions in ${\bf C}\sp{n}$}, Ann. Polon. Math. {\bf 39} (1981), 175--211.








\bibitem[Vu19]{Vu19}
D.-V. Vu, {\it Locally pluripolar sets are pluripolar}, Int. J. Math. {\bf 30} (2019), no.~13, 1950029. 


\bibitem[Za76]{Za76}
V. Zaharjuta, {\it Extremal plurisubharmonic functions, orthogonal polynomials, and the Bern\v{s}te\u{\i}n-Walsh theorem for functions of several complex variables,} Ann. Polon. Math. {\bf 33} (1976/77), no.~1-2, 137--148.

\bibitem[Z20]{Z20}
A. Zeriahi, {\it Remarks on the modulus of continuity of subharmonic functions}, preprint (2020), arXiv:2007.08399.


\end{thebibliography}
\end{document}